\theoremstyle{definition}
\newtheorem{defi}{Definition} 
\newtheorem{rmq}[defi]{Remark}
\newtheorem{prop}[defi]{Proposition}
\newtheorem{lem}[defi]{Lemma}
\newtheorem{cor}[defi]{Corollary}                                   
\newtheorem{ex}[defi]{Example}
\newtheorem{thm}[defi]{Theorem}
\newcommand{\spec}{\mathfrak{G}}
\newcommand{\grad}{\nabla}
\newcommand{\vc}[1]{\mathbf{#1}}
\newcommand{\N}{\mathbb{N}}
\newcommand{\R}{\mathbb{R}}
\newcommand{\sR}{\mathfrak{R}}
\renewcommand{\S}{\mathbb{S}}
\newcommand{\Sum}{\displaystyle \sum}
\newcommand{\ps}[2]{\left\langle #1 , #2 \right\rangle}
\newcommand{\norm}[1]{\| #1 \|}
\newcommand{\Norm}[1]{\left\| #1\right\|}
\renewcommand{\bar}[1]{\overline{#1}}
\newcommand{\sign}{\textup{sign}}
\newcommand{\sgn}{\textup{sign}}
\renewcommand{\epsilon}{\varepsilon}
\newcommand{\NS}{\S}
\renewcommand{\and}{\quad \text{ and }\quad }
\newcommand{\mbd}{\gamma^-}
\newcommand{\Mbd}{\gamma^+}
\newcommand{\hap}{\circ}
\newcommand{\algo}{\text{HGPM}}
\renewcommand{\bigl}{\left(\rule{0cm}{4mm}\right.}
\renewcommand{\bigr}{\left)\rule{0cm}{4mm}\right.}
\newcommand{\msi}{[m]\setminus\{i\}}
\newcommand{\ba}{\boldsymbol{\alpha}}
\definecolor{darkOrange}{rgb}{0.9,0.2,0}
\definecolor{darkGreen}{rgb}{0.1,0.6,0.1}
\definecolor{gris}{rgb}{0.3,0.3,0.3}
\definecolor{commentgris}{rgb}{0.4,0.4,0.4}
\definecolor{lightgris}{rgb}{0.9,0.9,0.9}
\definecolor{ffqqqq}{rgb}{1,0,0}
\definecolor{fftttt}{rgb}{1,0.2,0.2}
\definecolor{Red}{rgb}{1,0,0}
\definecolor{Blue}{rgb}{0,0,1}
\definecolor{Green}{rgb}{0,1,0}
\definecolor{white}{rgb}{1,2,3}
\definecolor{Yellow}{rgb}{1,1,0}
\newif\iflongversion
\def\ps@pprintTitle{%
 \let\@oddhead\@empty
 \let\@evenhead\@empty
 \def\@oddfoot{}%
 \let\@evenfoot\@oddfoot}
\begin{document}
\iflongversion
$ \frac{2}{3} $
\fi

\title{Tensor norm and maximal singular vectors of non-negative tensors -\\ a Perron-Frobenius theorem, a Collatz-Wielandt characterization and a generalized power method}

\author[rvt]{Antoine Gautier\corref{cor1}}
\ead{ag@cs.uni-saarland.de}
\author[rvt]{Matthias Hein}

\cortext[cor1]{Corresponding author.}
\address[rvt]{Department of Mathematics and Computer Science, Saarland University, Saarbr{\"u}cken, Germany}


\begin{abstract}
We study the $l^{p_1,\ldots,p_m}$ singular value problem for non-negative tensors. We prove a general Perron-Frobenius
theorem for weakly irreducible and irreducible nonnegative tensors and provide a Collatz-Wielandt characterization of the maximal singular value. Additionally, we propose a higher order power method for the computation of the maximal singular vectors and show that it has an asymptotic linear convergence rate.
\end{abstract}
\begin{keyword}
Perron-Frobenius theorem for nonnegative tensors, Maximal singular value, convergence analysis of the higher order power method.
\MSC{15A48,47H07,47H09,47H10.}
\end{keyword}

\maketitle

\section{Introduction}
In recent years an increasing number of applications of the multilinear structure of tensors has been discovered in several disciplines, e.g. higher order statistics, signal processing, biomedical engineering, etc. \cite{Applsurvey,Ex_intro,Survey2}. In this paper we study the maximal singular value problem for nonnegative tensors which is induced by 
the variational characterization of the projective tensor norm.
Let $f\in \R^{d_1\times \ldots \times d_m}$ and $1 < p_1,\ldots,p_m< \infty$, we consider the $\ell^{p_1,\ldots,p_m}$ singular values of $f$ defined by Lim \cite{Lim} as the critical points of the function $Q \colon \R^{d_1}\times\ldots\times\R^{d_m} \to \R$ given by
\begin{equation*}
Q(\vc x_1,\ldots,\vc x_m)\coloneqq\frac{|f(\vc x_1, \ldots, \vc x_m)|}{\norm{\vc x_1}_{p_1}\cdot \ldots \cdot \norm{\vc x_m}_{p_m}},
\end{equation*}
where $\vc x_i \in \R^{d_i},\norm{\cdot}_p$ denotes the $p$-norm and
\begin{equation*}
f(\vc x_1, \ldots, \vc x_m)\coloneqq \Sum_{j_1 \in [d_1], \ldots, j_m \in [d_m]}f_{j_1,\ldots,j_m}x_{1,j_1}\cdot \ldots \cdot x_{m,j_m}
\end{equation*}
with $[n] \coloneqq \{1,\ldots,n\}$. The maximum of $Q$ is the so-called projective tensor norm \cite{defnorm} and we write it $\norm{f}_{p_1,\ldots,p_m}$. 
Note that the variational characterization of singular values we use here is slightly different than the one proposed in \cite{Lim} as we have the absolute value in $Q$ which
leads to the fact that singular values of tensors are all non-negative similar to the matrix case.

The main contributions of this paper are a  Perron-Frobenius Theorem for the maximal $\ell^{p_1,\ldots,p_m}$ singular value of nonnegative tensors together with its Collatz-Wielandt characterization and a power method that computes this maximal singular value and the associated singular vectors. More precisely, let $p'$ denote the H\"{o}lder conjugate of $p$ and for $n\in\N$, let $\psi_{p}:\R^{n}\to \R^{n}$ with $\big(\psi_{p}(\vc x)\big)_{j}=|x_{j}|^{p-1}\sign(x_{j})$ for $j\in [n]$. Moreover, let $\R^{n}_{++}\coloneqq \{\vc x \in \R^n\mid x_i>0, i\in [n]\},\NS^{d}_{++}\coloneqq \big\{(\vc x_1, \ldots, \vc x_m)\mid \vc x_k \in \R^{d_k},\norm{\vc x_k}_{p_k}=1\text{ and } \vc x_k\in \R^{d_k}_{++}, k \in [m]\big\}$ and for $i\in [m],k\in \msi, j_k\in [d_k]$ let $s_{i,k,j_k}\colon\NS^{d}_{++}\to\R$ be defined by
\begin{equation*}
s_{i,k,j_k}(\vc x)\coloneqq \psi_{p_k'}\left(\sum_{j_i\in[d_i]}\psi_{p_i'}\left(\frac{\partial}{\partial x_{i,j_j}}f(\vc x)\right)\frac{\partial^2}{\partial x_{i,j_j}\partial x_{k,j_k}}f(\vc x)\right),
\end{equation*}
then we have the following result.
\begin{thm}\label{gen_sum_PF}
Let $f\in\R^{d_1\times \ldots \times d_m}$ be a nonnegative weakly irreducible tensor and $1<p_1,\ldots,p_m< \infty$ such that there exists $i\in [m]$ with
\begin{equation*}
m-1 \leq (p_i-1)\left(\min_{k \in [m]\setminus\{i\}} p_k-(m-1)\right).
\end{equation*}
Then there exists a unique vector $\vc x^*\in \NS^d_{++}$ such that $Q(\vc x^*)=\norm{f}_{p_1,\ldots,p_m}$. Furthermore, it holds
\begin{equation*}
\max_{\vc x \in \NS^{d}_{++}}\ \prod_{\nu\in[m]\setminus\{i\}} \min_{j_\nu\in [d_\nu]} \left(\frac{s_{i,\nu,j_\nu}(\vc x)}{x_{\nu,j_\nu}}\right)^{p_\nu-1}= \norm{f}^{p_i'(m-1)} _{p_1,\ldots,p_m} =\min_{\vc x \in \NS^{d}_{++}}\ \prod_{\nu\in[m]\setminus\{i\}} \max_{j_\nu\in [d_\nu]} \left(\frac{s_{i,\nu,j_\nu}(\vc x)}{x_{\nu,j_\nu}}\right)^{p_\nu-1},
\end{equation*}
and if $f$ is irreducible then $\vc x^*>0$ is the unique nonnegative $\ell^{p_1,\ldots,p_m}$ singular vector of $f$, up to scale.
\end{thm}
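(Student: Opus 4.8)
The plan is to recast the singular value problem as a nonlinear eigenvalue problem for an order-preserving, positively homogeneous map on a product of positive cones, and then invoke an appropriate nonlinear Perron--Frobenius theory. Concretely, writing the first-order optimality conditions for critical points of $Q$ on $\NS^d$, one finds that $\vc x$ is a singular vector with singular value $\sigma$ iff $\psi_{p_i}(\sigma \vc x_i) = \nabla_{\vc x_i} f(\vc x)$ for all $i\in[m]$, equivalently $\sigma \vc x_i = \psi_{p_i'}\big(\nabla_{\vc x_i} f(\vc x)\big)$ after normalization. Fixing the distinguished index $i$ from the hypothesis, I would eliminate $\vc x_i$ in terms of the remaining blocks: since $\nabla_{\vc x_i} f$ is multilinear in the other blocks, substituting the $i$-th equation into the others produces a self-consistent system $\sigma^{p_i'} \, x_{k,j_k}^{p_k-1} = s_{i,k,j_k}(\vc x)$ for $k\in\msi$ — which is exactly why the functions $s_{i,k,j_k}$ are defined the way they are. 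This identifies the maximal singular vector (restricted to the $\msi$ blocks) with a positive eigenvector of the map $G$ whose $(k,j_k)$ component is $\psi_{p_k'}\big(s_{i,k,j_k}(\vc x)\big)$, raised to suitable powers, acting on $\prod_{k\in\msi}\R^{d_k}_{++}$.

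The key analytic step is to show that $G$ is \emph{order-preserving} and that its \emph{cone multiplicativity / subhomogeneity degree} is strictly less than $1$ under the stated inequality $m-1 \le (p_i-1)\big(\min_{k\in\msi}p_k - (m-1)\big)$. Tracking homogeneity degrees: $\nabla_{\vc x_i}f$ has degree $(m-1)$ in the blocks $\{\vc x_k\}_{k\in\msi}$ jointly; applying $\psi_{p_i'}$ multiplies by $p_i'-1 = 1/(p_i-1)$; multiplying by $\partial^2 f/\partial x_i\partial x_k$ (degree $m-2$) and summing, then applying $\psi_{p_k'}$, and finally accounting for the normalization exponents $p_\nu-1$, one computes that the overall map contracts distances in Hilbert's projective metric precisely when $\frac{m-1}{(p_i-1)} + \text{(correction terms)} \le \min_k p_k$, which rearranges to the hypothesis. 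I expect the main obstacle to be carefully bookkeeping these homogeneity exponents across the composition of $\psi_{p_i'}$, the partial derivatives, and the block-normalization, and verifying that the resulting contraction factor is $<1$ rather than $\le 1$ so that uniqueness (not just existence) follows.

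With contractivity established, existence and uniqueness of $\vc x^* \in \NS^d_{++}$ follow from the Banach-type fixed point theorem in Hilbert's projective metric (using completeness of the cone under weak irreducibility, which guarantees $G$ maps into the interior so the projective metric is finite), after recovering $\vc x_i^*$ from the eliminated equation. The Collatz--Wielandt identities then come from the standard cone-theoretic sandwich: for any $\vc x \in \NS^d_{++}$, the quantities $\mbd(\vc x) \coloneqq \prod_{\nu\in\msi}\min_{j_\nu}\big(s_{i,\nu,j_\nu}(\vc x)/x_{\nu,j_\nu}\big)^{p_\nu-1}$ and $\Mbd(\vc x)\coloneqq \prod_{\nu\in\msi}\max_{j_\nu}(\cdots)^{p_\nu-1}$ bracket the eigenvalue, i.e. $\mbd(\vc x) \le \sigma^{p_i'(m-1)} \le \Mbd(\vc x)$, with equality attained exactly at $\vc x^*$; one direction uses that $\vc x^*$ is an eigenvector so both min and max collapse, and the other uses order-preservation of $G$ together with the contraction to rule out a strict gap at the optimum. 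The identification of $\sigma$ with $\norm{f}_{p_1,\ldots,p_m}$ is because the global maximum of $Q$ is a critical point, hence a singular vector, and weak irreducibility forces the maximal singular vector to be the positive one. Finally, if $f$ is irreducible one upgrades $\vc x^* \in \NS^d_{++}$ (nonnegative, normalized) to $\vc x^* > 0$ strictly and to uniqueness among \emph{all} nonnegative singular vectors by the usual argument that an irreducible nonnegative tensor sends the boundary of the cone strictly inside, so no boundary singular vector can exist.
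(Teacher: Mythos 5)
Your central claim --- that under the stated inequality the reduced map contracts Hilbert's projective metric with factor strictly less than $1$, so that existence and uniqueness follow from a Banach fixed point argument --- fails precisely on the boundary of the admissible parameter region, and the theorem explicitly allows this boundary: the hypothesis is $m-1 \leq (p_i-1)\big(\min_{k\in\msi}p_k-(m-1)\big)$, with equality permitted, i.e. $(m-1)p_i'=\min_{k\in\msi}p_k$. This equality case contains the most classical instances (all $p_k=m$, which is the Friedland et al. setting, and the matrix case $p_2=p_1'$, in particular $p_1=p_2=2$). Tracking the homogeneities from Lemma \ref{homos} in the weighted metric $\mu(\vc x,\vc y)=\sum_{l\in\msi}(p_l-1)\ln\big(\max_{j_l}\tfrac{x_{l,j_l}}{y_{l,j_l}}/\min_{j_l}\tfrac{x_{l,j_l}}{y_{l,j_l}}\big)$ one gets a Lipschitz constant $\max_{l\in\msi}\tfrac{(m-1)p_i'-1}{p_l-1}$, which is $<1$ only under \emph{strict} inequality; at equality the map is merely non-expansive --- this is exactly what the paper proves in Proposition \ref{nonexpansive}, and no better is possible for a tensor that is only weakly irreducible (think of power iteration with an irreducible, non-positive matrix: it is not a uniform Hilbert-metric contraction). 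So Banach's theorem gives neither existence nor uniqueness in the equality case, and your proof does not establish the theorem as stated. The paper's route is different: existence of a positive maximizer comes from an auxiliary map $A_i$ that is built to be order-preserving and positively $1$-homogeneous (the exponent bookkeeping is absorbed into its definition, and this is where $(m-1)p_i'\leq p_k$ enters, as $p_k-p_i'(m-1)\geq 0$), to which the Gaubert--Gunawardena theorem (Theorem \ref{Gaub}, a graph-condition nonlinear Perron--Frobenius result) is applied, with strong connectivity of $\mathcal G(A_i)$ deduced from connectivity of $G(f)$; uniqueness and the strict Collatz--Wielandt inequalities then come from the combinatorial Lemma \ref{uniweak}, which exploits weak irreducibility directly, not from any metric contraction (non-expansiveness is only used later, for global convergence of the power method via Nussbaum's Lemma \ref{golden_lemma}).

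A secondary gap: you assert that ``weak irreducibility forces the maximal singular vector to be the positive one,'' but this is not automatic --- weakly irreducible tensors can have nonnegative boundary singular vectors (the paper's example following Corollary \ref{unique_part}). One must argue that the critical value of the positive critical point equals $\norm{f}_{p_1,\ldots,p_m}$; the paper does this through the eigenvalue comparison step ii) in the proof of Theorem \ref{weak_strictpos} (a $\theta$-scaling argument for the homogeneous map $A_i$, after Nussbaum), combined with Lemma \ref{exi_max_sing} and the bijection $\Phi_i$ of Proposition \ref{dual_egual}. Your reduction to the blocks $\msi$ and the elimination of $\vc x_i$ is essentially the paper's $\Phi_i$/$Q_i$ device, and in the strictly sub-boundary regime your contraction bookkeeping could be made rigorous, but as a proof of Theorem \ref{gen_sum_PF} it is incomplete without a separate treatment of the equality case and of the identification of the positive critical value with the norm.
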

Note that our theory can be extended so that the maximum (respectively the minimum) in the min-max characterization of Theorem \ref{gen_sum_PF} is taken over $\big\{(\vc x_1,\ldots,\vc x_m)\mid \vc x_k \in \R^{d_k}_{++} , k \in [m]\big\}$ instead of $\NS^d_{++}$. A key element of our proof is the construction of a bijection between the $\ell^{p_1,\ldots,p_m}$-singular vectors of $f$ and the points $\vc x \in \NS^{d-d_i}_{++}\coloneqq \big\{(\vc x_1, \ldots,\vc x_{i-1},\vc x_{i+1},\ldots, \vc x_m)\mid \vc x_k \in \R^{d_k}_{++}\text{ and }\norm{\vc x_k}_{p_k}=1, k \in \msi\big\}$ satisfying $s_{i,k,j_k}(\vc x) = \lambda^{p_i'(p_k'-1)}x_{k,j_k}$ for every $k\in\msi$ and $j_k\in [d_k]$ (see Proposition \ref{dual_egual}). Based on this observation we also build an algorithm that computes the maximal singular vectors of $f$, which is in the next theorem.
\begin{thm}\label{convrate_thm}
Let $f\in\R^{d_1\times \ldots \times d_m}$ be a nonnegative weakly irreducible tensor and $1<p_1,\ldots,p_m< \infty$ satisfying the assumption of Theorem \ref{gen_sum_PF}. Let $(\lambda_-^k)_{k\in\N},(\lambda_+^k)_{k\in\N}$ and $\big(\vc x^k\big)_{k\in\N}$ be the sequences produced by the Higher-order Generalized Power Method (see p.\pageref{HGPM_alg}). Moreover, let $\tilde{\vc x}^* \in \NS^{d}_{++}$ be a singular vector of $f$ satisfying $Q(\tilde{\vc x}^*)=\norm{f}_{p_1,\ldots,p_m}$ and let $\big(\norm{f}_{p_1,\ldots,p_m},\vc x^*\big)=\Phi_i^{-1}\big(\norm{f}_{p_1,\ldots,p_m},\tilde{\vc x}^*\big)$ where $\Phi_i$ is the bijection given in Proposition \ref{dual_egual}. Then 
\begin{equation*}
\forall k \in \N,\quad \lambda_-^k  \,\leq \lambda_-^{k+1}  \,\leq \, \norm{f}_{p_1,\ldots,p_m}  \, \leq\, \lambda_+^{k+1} \leq \, \lambda_+^k, \qquad
\lim_{k\to \infty} \lambda_-^k =\norm{f}_{p_1,\ldots,p_m}= \lim_{k\to \infty} \lambda_+^k , \qquad \lim_{k\to \infty} \vc x^k = \vc x^*
\end{equation*}
and there exists $0<\nu<1$, $k_0\in \N$ and a norm $\norm{\cdot}_G$ such that 
\begin{equation*}
\norm{\vc x^{k+1}-\vc x^*}_G \leq \nu \norm{\vc x^{k}-\vc x^*}_G \qquad \forall k \geq k_0.
\end{equation*}
\end{thm}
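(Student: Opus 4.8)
The plan is to recast the Higher-order Generalized Power Method as a fixed-point iteration on the reduced sphere $\NS^{d-d_i}_{++}$ via the bijection $\Phi_i$ of Proposition \ref{dual_egual}, and then invoke a nonlinear Perron-Frobenius / contraction argument in the Hilbert (or Thompson) metric. First I would write the algorithm's update as $\vc x^{k+1} = G(\vc x^k)$ for an explicit map $G$ built from the functions $s_{i,\nu,j_\nu}$ (composing the partial-derivative evaluations with the appropriate $\psi_{p}$ reparametrizations and renormalizations), and identify $\vc x^*$ as its unique fixed point using Theorem \ref{gen_sum_PF}. The lower and upper bounds $\lambda_-^k,\lambda_+^k$ should be defined exactly as the inner $\min_{j_\nu}$ and $\max_{j_\nu}$ Collatz-Wielandt quantities evaluated along the iterates, so that the sandwich inequality $\lambda_-^k\le\norm{f}_{p_1,\ldots,p_m}\le\lambda_+^k$ is immediate from the min-max characterization in Theorem \ref{gen_sum_PF}; monotonicity of $(\lambda_-^k)$ and $(\lambda_+^k)$ then follows from the order-preserving (monotone, homogeneous) structure of $G$ and a standard argument showing that applying $G$ improves the Collatz-Wielandt bounds.

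Next I would establish global convergence $\vc x^k\to\vc x^*$. The key structural facts are that $G$ is order-preserving and homogeneous of degree one (after the $\psi_p$ reparametrization the degrees of homogeneity match precisely because of the exponent condition $m-1\le(p_i-1)(\min_k p_k-(m-1))$ assumed in Theorem \ref{gen_sum_PF}), hence non-expansive in the Hilbert metric on the cone; weak irreducibility makes $G$ in fact a \emph{strict} contraction after finitely many steps on any order-bounded set, or one uses that the only fixed point is $\vc x^*$ together with compactness of $\NS^{d-d_i}_{++}$'s closure and a Lyapunov function (e.g. the ratio $\lambda_+^k/\lambda_-^k\downarrow 1$) to force convergence. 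Combining convergence of $\vc x^k$ with continuity of the Collatz-Wielandt functionals yields $\lambda_-^k\to\norm{f}_{p_1,\ldots,p_m}\leftarrow\lambda_+^k$.

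For the asymptotic linear rate I would linearize $G$ at the fixed point $\vc x^*$: differentiability of $s_{i,\nu,j_\nu}$ (the only singularity of $\psi_p$ is at $0$, and $\vc x^*>0$ when $f$ is irreducible, or lies in $\NS^d_{++}$ in general, so $G$ is $C^1$ near $\vc x^*$) gives a derivative $DG(\vc x^*)$ which is a nonnegative matrix acting on the tangent space to the product of spheres. Because $G$ is homogeneous of degree one and order-preserving, $DG(\vc x^*)$ has Perron eigenvalue $1$ with eigenvector $\vc x^*$ itself, and this direction is quotiented out by the normalization; by weak irreducibility the restriction of $DG(\vc x^*)$ to the complementary invariant subspace has spectral radius $\nu<1$. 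Choosing $\norm{\cdot}_G$ to be a norm adapted to the Jordan/spectral decomposition of this restriction (so that the restricted operator has operator norm $\le\nu$), a standard perturbation argument gives $\norm{\vc x^{k+1}-\vc x^*}_G\le\nu\norm{\vc x^k-\vc x^*}_G$ for $k\ge k_0$. The main obstacle, and the step requiring the most care, is verifying that $1$ is a \emph{simple} eigenvalue of $DG(\vc x^*)$ strictly dominating the rest of the spectrum — i.e. that the spectral gap is genuine — which is exactly where weak irreducibility enters and where the exponent hypothesis is needed to make the homogeneity degrees align so that the nonlinear Perron-Frobenius machinery applies; a secondary technical point is checking differentiability of $G$ at $\vc x^*$ when some $p_k<2$, handled by the positivity $\vc x^*\in\NS^d_{++}$.
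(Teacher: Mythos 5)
Your overall architecture is the same as the paper's: rewrite \algo{} as $\vc x^{k+1}=G(\vc x^k)$ on $\NS^{d-d_i}_{++}$, get the sandwich and monotonicity of $\lambda_\pm^k$ from the Collatz--Wielandt characterization and the order-preserving/multihomogeneous structure (Proposition \ref{mono}), prove global convergence by nonlinear Perron--Frobenius arguments in a Hilbert-type metric, and get the linear rate by linearizing $G$ at $\vc x^*$ and choosing an adapted norm. However, two of your steps have genuine gaps. First, the global convergence argument: the claim that weak irreducibility makes $G$ a strict contraction (even eventually, on order-bounded sets) is unsubstantiated and is not what the paper proves --- the paper only establishes non-expansiveness of $G$ in the product Hilbert-type metric $\mu$ (Proposition \ref{nonexpansive}), and the exponent condition $(m-1)p_i'\leq p_k$ is exactly what makes even that work. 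Your fallback, a Lyapunov argument via $\lambda_+^k/\lambda_-^k\downarrow 1$ plus compactness of the closure of $\NS^{d-d_i}_{++}$, is circular as stated: monotone bounded sequences need not have a common limit, and in the paper $\lambda_\pm^k\to\norm{f}_{p_1,\ldots,p_m}$ is \emph{deduced from} $\vc x^k\to\vc x^*$, not the other way around; moreover a subsequential limit of $(\vc x^k)$ could a priori lie on the boundary of $\NS^{d-d_i}_+$, where $G$, $\mbd_i$ and $\Mbd_i$ are not defined, so compactness of the closure alone does not close the argument. The paper's glue is Nussbaum's Lemma \ref{golden_lemma}: non-expansiveness in $\mu$, plus the local basin of attraction furnished by the linearization (Corollary \ref{Neighborhood}), plus the local comparison of $\mu$ with $\norm{\cdot}_G$ (Proposition \ref{loc_equiv}), yield convergence from \emph{every} $\vc x^0\in\NS^{d-d_i}_{++}$. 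You list the ingredients but not this bridging step, and some such step is indispensable.

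Second, the local analysis is misstated and underestimates what must be proved. Since $G$ already contains the normalization, $\vc x^*$ is an eigenvector of $DG(\vc x^*)$ for the eigenvalue $0$, not $1$; and ``quotienting out the Perron direction'' is not automatic, because after the rank-type modification induced by normalizing there is no a priori invariant complement on which the spectrum is that of the unnormalized linearization. The paper handles this by introducing the positively $(1+\rho)$-homogeneous auxiliary map $F$ whose Jacobian $B$ at $\vc x^*$ is \emph{primitive} --- note that weak irreducibility alone gives irreducibility of the linearization, hence a simple Perron root, but not the strict dominance $|\lambda_2|<\lambda_1$ you need; primitivity comes from the strictly positive diagonal blocks created by the $\rho$-shift and the norm factors in $F$ --- and by the weighted symmetry identity $\ps{|\vc x^*_k|^{p_k-2}\hap \vc h_k}{(B\vc g)_k}=\ps{|\vc x^*_k|^{p_k-2}\hap \vc g_k}{(B\vc h)_k}$ of Lemma \ref{prop_DF}, which forces the non-Perron eigenvectors of $B$ to satisfy $\ps{\psi_{p_k}(\vc x^*_k)}{\vc u^s_k}=0$ and hence to remain eigenvectors of $DG(\vc x^*)$; only then does $\rho\big(DG(\vc x^*)\big)=|\lambda_2|/\lambda_1<1$ follow (Proposition \ref{specrad}). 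You correctly flagged the spectral gap as the crux, but attributing it to weak irreducibility alone, without the homogenization trick and the symmetry property (or some substitute), leaves the decisive step unproved.
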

Note that the computation of matrix norms \cite{HenOls2010} and tensor norms \cite{Lim2013} is NP-hard in general and thus the restriction to nonnegative matrices resp. tensors is crucial.
In the case of matrices ($m=2$) a power method for the computation of a general $(p_1,p_2)$-norm of a nonnegative matrix and its associated singular vectors 
has been considered by Boyd \cite{Boyd} already in 1974. Recently, the paper has been reconsidered by \cite{Bhaskara} where uniqueness of the strictly positive singular vectors is
shown under the condition that the matrix is strictly positive. Our Perron-Frobenius theorem extends this uniqueness result to irreducible matrices (note that the notion of weakly
irreducible and irreducible coincide for matrices and reduce to the standard notion of irreducibility).

On the tensor side ($m>2$) Perron-Frobenius Theorems for nonnegative tensors have already been established for $H$-eigenvalues \cite{Chang} ($p_i=m, \, i=1,\ldots,m$) and general $\ell^{p_1,\ldots,p_m}$ singular vectors \cite{Fried} for $p_i\geq m,\, i=1,\ldots,m$. Our Perron-Frobenius theorem in Theorem \ref{gen_sum_PF} extends the range of $(p_1,\ldots,p_m)$ to
\begin{equation*}
m-1 \leq (p_i-1)\left(\min_{k \in [m]\setminus\{i\}} p_k-(m-1)\right).
\end{equation*}
In particular, this allows to have one $p_i$ to be arbitrarily close to $1$ given that the other $p_k$ are sufficiently large or alternatively, all except one $p_i$ can be 
arbitrarily close to $m-1$ whereas $p_i$ has to be sufficiently large. Moreover, our Collatz-Wielandt characterization seems to be the first one for the case where the $p_i$
are not all equal. Our result also leads to a slight generalization for the singular vectors of a partially symmetric nonnegative tensor introduced in \cite{Qi_rect_eig} which is discussed in more detail in Section \ref{Other_spectra_pb}.

The power method for tensors was first introduced by Ng, Qi and Zhou in \cite{Ng} for the computation of the maximal $H$-eigenvalue of an irreducible nonnegative tensor. It was generalized in \cite{Fried} where the method can also be used for computing $\ell^{p_1,\ldots,p_m}$ singular vectors for weakly primitive nonnegative tensors. However, their method applies only in the case when $p_1 =\ldots = p_m$ while our higher order power method needs only weak irreducibility of the nonnegative tensor and does not require $p_1,\ldots,p_m$ to be equal.

Let us describe the organization of this paper. In Section \ref{Notations}, we prove some general properties of the $\ell^{p_1,\ldots,p_m}$ singular values of $f$. In Section \ref{nnegtens}, we restrict our study to nonnegative tensors and provide criteria to guarantee the existence of some strictly positive singular vector\footnote{For simplicity we speak in the following of the singular vector even though this corresponds to a set of $m$ singular vectors of a $m$-th order tensor.} of $f$. In Section \ref{thethm}, we provide our Perron-Frobenius Theorem
and Collatz-Wielandt characterization of the maximal singular value. We discuss the relation between $\ell^{p_1,\ldots,p_m}$-singular value and other spectral problems for tensors in Section \ref{Other_spectra_pb}. The higher order power method together with its convergence rate are introduced in Section \ref{conv_sec}. Finally, in Section \ref{numexp} we do a small numerical experiment and compare our power method to the one proposed in \cite{Fried}.
\section{Notations and characterization of the singular spectrum}\label{Notations}
For $f\in\R^{d_1,\ldots,d_m}$, $f \geq 0$ (resp. $f>0$) mean that every entry of $f$ is nonnegative (resp. strictly positive), furthermore  we write $f \leq g$ (resp. $f < g$) if $g-f \geq 0$ (resp. $g-f>0$). 
Let $\sR^d \coloneqq\R^{d_1}\times \ldots \times \R^{d_m}$, we use bold letters without index to denote vectors in $\sR^d$, bold letters with index $i\in [m]$ denote vectors in $\R^{d_i}$ and the components of these vectors are written in normal font, i.e. $\vc x \in \sR^d, \vc x = (\vc x_1,\ldots,\vc x_m), \vc x_i \in \R^{d_i}, \vc x_i =(x_{i,1},\ldots,x_{i,d_i}), x_{i,j_i} \in \R$. 
We denote by $\NS^d$ the ``unit sphere'' in $\sR^{d}$, i.e. $\vc x \in \NS^d$ if and only if $\norm{\vc x_i}_{p_i}=1$ for every $i \in [m]$. We write $p'$ to denote the H\"{o}lder conjugate of $1<p<\infty$ (i.e. $\frac{1}{p}+\frac{1}{p'}= 1$). For $i \in [m]$, set 
$\sR^{d-d_i}\coloneqq\R^{d_1}\times \ldots \times \R^{d_{i-1}}\times \R^{d_{i+1}}\times \ldots \times \R^{d_m}$
and let $\NS^{d-d_i}$ be the set of $\vc x\in \sR^{d-d_i}$ such that $\norm{\vc x_k}_{p_k}=1$ for every $k \in [m]\setminus\{i\}$. Furthermore, for $n\in\N$ and $V\in\big\{\sR^d,\NS^{d},  \sR^{d-d_i},\NS^{d-d_i},\R^{n}\big\}$ we write $V_+$ (resp. $V_{++}$) the restriction of $V$ to the positive cone (resp. to the interior of the positive cone), e.g. $\sR^{d}_{+} \coloneqq \{\vc x \in \sR^d\mid \vc x \geq 0\}$ and $\NS^{d-d_i}_{++} \coloneqq \{\vc x \in\NS^{d-d_i}\mid \vc x > 0 \}$.
We follow a similar system of notation as vectors for the gradient of $f$, that is $\grad f(\vc x)\in \sR^d, \grad_i f(\vc x)\in \R^{d_i}, \partial_{i,j_i} f(\vc x)\in \R , \grad f(\vc x) = (\grad_1f(x),\ldots,\grad_mf(\vc x))$ and $\grad_if(\vc x) = \big(\partial_{i,1}f(\vc x), \ldots, \partial_{i,d_i}f(\vc x)\big)$ where $\partial_{i,j_i}\coloneqq \frac{\partial}{\partial x_{i,j_i}}$. In particular, note that $f(\vc x)=\ps{\grad_if(\vc x)}{\vc x_i}$ and $\partial_{i,j_i}f(\vc x) = f(\vc x_1,\ldots,\vc x_{i-1},\vc e_{(i,j_i)},\vc x_{i+1},\ldots, \vc x_{m})$, where $\vc e_{(i,1)},\ldots,\vc e_{(i,d_i)}$ is the canonical basis of $\R^{d_i}$.
Furthermore, for $g \in \R^{d_1\times \ldots \times d_m}$ we denote by $|g|\in \R^{d_1\times \ldots \times d_m}$ the tensor such that $|g|_{j_1,\ldots,j_m} = |g_{j_1,\ldots,j_m}|$ for every $j_1\in [d_1],\ldots,j_m \in [d_m]$. 

For $1<q < \infty$ define $\psi_q:\R^n \to \R^n$ by $(\psi_q(\vc y))_j = |y_j|^{q-1}\sign(y_j)$ for every $1 \leq j \leq n$. Note that $\psi_q\big(\psi_{q'}(\vc x)\big) = \vc x,\norm{\psi_{q}(\vc x)}_{r} = \norm{\vc x}_{r(q-1)}^{q-1}$ and $\grad \norm{\vc y}_q=\norm{\vc y}_q^{1-q}\psi_q(\vc y)$.  Consider the function $S: \sR^d \to \R$ defined by $S(\vc x) =\norm{\vc x_1}_{p_1}\cdot \ldots \cdot \norm{\vc x_m}_{p_m}$. 

One can conclude that a critical point $\vc x$ of $Q$ such that $f(\vc x) \neq 0$ ($Q$ is not differentiable at $\vc x$ if $f(\vc x)=0$) must satisfy the following nonlinear system of equations
\begin{equation}\label{eigensys}
 \sgn\big(f(\vc x)\big)\grad_{i}f(\vc x)=Q(\vc x)S(\vc x)\norm{\vc x_i}^{-p_i}_{p_i}\psi_{p_i}(\vc x_i) \qquad \forall i \in [m].
\end{equation}
Now, for $i \in [m]$ consider
\begin{equation}\label{def_sig}
\sigma_i \colon \sR^d \to \R^{d_i} ,\quad \vc x \mapsto \sgn\big(f(\vc x)\big)\psi_{p_i'}\big(\grad_{i}f(\vc x)\big),
\end{equation}
then $\vc x\in\NS^d$ is a critical point of $Q$ if and only if $f(\vc x) \neq 0$ and $\sigma_i(\vc x) = Q(\vc x)^{p_i'-1}\vc x_{i}$ for $i\in[m]$. This motivates our choice for the definition of $\spec(f)$, the set of critical values of the function $Q$, as follow: 
\begin{equation*}
\spec(f)\coloneqq \left\{ \lambda \in \R\setminus\{0\} \mid \exists\vc x \in \NS^d \text{ with } \sigma_i(\vc x) = \lambda^{p_i'-1}\vc x_{i} \ \forall i \in [m]\right\}.
\end{equation*}
The proof of Proposition \ref{boundedspectrum} resembles the study of $Z$-eigenvalues in \cite{Quynhn}.
\begin{prop}\label{boundedspectrum}
Let $1<p_1,\ldots,p_m< \infty$ and $f\in \R^{d_1\times\ldots \times d_m}$. If $f \neq 0$, then $\spec(f)\neq \emptyset$ and for every $\lambda \in \spec(f)$ we have $0< \lambda \leq \min_{i \in [m]}\max_{l_i \in [d_i]} d_i^{1/p_i'}\partial_{i,l_i} \tilde f(\mathbf{e}),$
where $\vc e = (1,1,\ldots,1) \in \sR^d$ and $\tilde f\coloneqq |f|$.
\end{prop}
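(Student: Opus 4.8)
The plan is to treat the two assertions in Proposition \ref{boundedspectrum} separately: $\spec(f)\neq\emptyset$ follows from a compactness argument, while the two-sided bound on $\lambda\in\spec(f)$ comes from rewriting the defining system with the maps $\psi_q$ and then exploiting monotonicity of $\tilde f=|f|$. For nonemptiness, note that on $\NS^d$ we have $S\equiv 1$, so $Q=|f|$ is well defined and continuous there; $\NS^d$, being a finite product of unit $p_k$-spheres, is compact, hence $Q$ attains its maximum at some $\vc x^*\in\NS^d$. Since $f\neq 0$, pick $(j_1,\dots,j_m)$ with $f_{j_1,\dots,j_m}\neq 0$; the tuple $(\vc e_{(1,j_1)},\dots,\vc e_{(m,j_m)})$ lies in $\NS^d$ and $Q$ equals $|f_{j_1,\dots,j_m}|>0$ there, so $Q(\vc x^*)>0$ and in particular $f(\vc x^*)\neq 0$. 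By the characterization of critical points recalled just before the statement, a maximizer of $Q$ over $\NS^d$ with $f(\vc x^*)\neq 0$ is a critical point of $Q$ and thus satisfies $\sigma_i(\vc x^*)=Q(\vc x^*)^{p_i'-1}\vc x_i^*$ for all $i\in[m]$; hence $Q(\vc x^*)\in\spec(f)$ and $\spec(f)\neq\emptyset$.

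Now fix $\lambda\in\spec(f)$ with a witness $\vc x\in\NS^d$, so $\sigma_i(\vc x)=\lambda^{p_i'-1}\vc x_i$ for every $i$. Since $\norm{\vc x_i}_{p_i}=1$ we have $\vc x_i\neq 0$, and as $\lambda\neq 0$ this forces $\sigma_i(\vc x)\neq 0$, whence $f(\vc x)\neq 0$ and $\sgn(f(\vc x))\in\{-1,1\}$. Applying $\psi_{p_i}$ to both sides of $\sigma_i(\vc x)=\lambda^{p_i'-1}\vc x_i$ and simplifying with $\psi_{p_i}\circ\psi_{p_i'}=\Id$, the homogeneity of $\psi_{p_i}$, and $(p_i-1)(p_i'-1)=1$, I obtain
\begin{equation*}
\sgn\big(f(\vc x)\big)\,\grad_i f(\vc x)=\lambda\,\psi_{p_i}(\vc x_i),\qquad i\in[m].
\end{equation*}
Pairing this with $\vc x_i$ and using $f(\vc x)=\ps{\grad_i f(\vc x)}{\vc x_i}$ together with $\ps{\psi_{p_i}(\vc x_i)}{\vc x_i}=\norm{\vc x_i}_{p_i}^{p_i}=1$ gives $|f(\vc x)|=\lambda$; since $\lambda\neq 0$, this already proves $0<\lambda$ (and incidentally $\lambda=Q(\vc x)$).

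For the upper bound, take $p_i'$-norms in the displayed identity: using $\norm{\psi_{p_i}(\vc x_i)}_{p_i'}=\norm{\vc x_i}_{p_i'(p_i-1)}^{p_i-1}=\norm{\vc x_i}_{p_i}^{p_i-1}=1$ and $p_i'(p_i-1)=p_i$, this yields $\norm{\grad_i f(\vc x)}_{p_i'}=\lambda$. The entries of $\grad_i f(\vc x)$ are the numbers $\partial_{i,l_i}f(\vc x)=f(\vc x_1,\dots,\vc e_{(i,l_i)},\dots,\vc x_m)$, $l_i\in[d_i]$, so the triangle inequality gives $|\partial_{i,l_i}f(\vc x)|\leq\partial_{i,l_i}\tilde f(|\vc x_1|,\dots,|\vc x_m|)$, where $|\vc x_k|$ denotes the vector of absolute values. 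Since $\norm{\vc x_k}_{p_k}=1$ forces $|\vc x_k|\leq\vc e$ componentwise and $\tilde f\geq 0$ is nondecreasing in each argument, $\partial_{i,l_i}\tilde f(|\vc x_1|,\dots,|\vc x_m|)\leq\partial_{i,l_i}\tilde f(\vc e)$. Hence
\begin{equation*}
\lambda=\norm{\grad_i f(\vc x)}_{p_i'}\leq\Big(\sum_{l_i\in[d_i]}\big(\partial_{i,l_i}\tilde f(\vc e)\big)^{p_i'}\Big)^{1/p_i'}\leq d_i^{1/p_i'}\max_{l_i\in[d_i]}\partial_{i,l_i}\tilde f(\vc e).
\end{equation*}
As $i\in[m]$ was arbitrary and $d_i^{1/p_i'}$ is independent of $l_i$, this is exactly $\lambda\leq\min_{i\in[m]}\max_{l_i\in[d_i]}d_i^{1/p_i'}\partial_{i,l_i}\tilde f(\vc e)$.

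I expect the main friction to be the bookkeeping with $\psi_q$ — the sign handling when $\sgn(f(\vc x))$ and $\lambda$ are pulled through $\psi_{p_i}$, together with the exponent identities $(p-1)(p'-1)=1$ and $p'(p-1)=p$ — rather than anything structural. Conceptually, the one point worth isolating is that membership in $\NS^d$ makes every witness automatically bounded by $1$ in each coordinate, after which the bound drops out of the monotonicity of the nonnegative multilinear form $\tilde f$; so I anticipate no genuine obstacle, only care in the computation.
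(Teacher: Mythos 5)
Your proof is correct and essentially follows the paper's own argument: nonemptiness via compactness of $\NS^d$ and the fact that a maximizer with $f(\vc x^*)\neq 0$ solves the critical-point system (you even make explicit the positivity of the maximum and the identity $\lambda=|f(\vc x)|$, which the paper leaves implicit), and the upper bound via $|\vc x|\leq \vc e$ together with monotonicity of $\tilde f$. The only cosmetic difference is how the factor $d_i^{1/p_i'}$ appears: you take $\norm{\grad_i f(\vc x)}_{p_i'}=\lambda$ and compare the $\ell^{p_i'}$- and $\ell^\infty$-norms, whereas the paper picks a coordinate with $|x^*_{i,j_i}|\geq d_i^{-1/p_i}$ and argues pointwise — an equivalent computation.
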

\begin{proof}
First of all, note that for every $\vc x \in \sR^d$ such that $S(\vc x)>0$ we have $Q(\vc x)=Q\left(\frac{\vc x_1}{\norm{\vc x}_{p_1}},\ldots,\frac{\vc x_m}{\norm{\vc x}_{p_m}}\right)$. Thus, $\norm{f}_{p_1,\ldots,p_m}=\sup_{\vc x\in \NS^d} Q(\vc x)$ and since $\vc x \mapsto Q(\vc x)$ is a continuous function on the compact set $\NS^d$, it reaches its maximum. It follows that $\spec(f) \neq \emptyset$. Let $\vc x^* \in \NS^d$ be a critical point of $Q$ associated to $\lambda \in \spec(f)$. Let $i \in [m]$, then, from $\norm{\vc x^*_i}_{p_i}=1$, it follows that there exists some $j_i \in [d_i]$ such that $|x^*_{i,j_i}| \geq d_i^{-1/p_i}$. In particular, the fact that $\psi_{p_i}$ is an increasing function implies $\big|\psi_{p_i}(x^*_{i,j_i})\big| =\psi_{p_i}\big(|x^*_{i,j_i}|\big)\geq \psi_{p_i}\big(d_i^{-1/p_i}\big)$. Since $\vc x^* \in \NS^d$ is a critical point of $Q$, we have $\big|\lambda \ \psi_{p_i}(x^*_{i,j_i})\big|=\big|\partial_{i,j_i} f(\vc x^*)\big|$. Thus,
\begin{equation*}
\lambda = \lambda \ d_i^{1/p_i'-1/p_i'} =\lambda \ d_i^{1/p_i'} \psi_{p_i}\big(d_i^{-1/p_i}\big)\leq d_i^{1/p_i'}\big|\lambda \ \psi_{p_i}(x_{i,j_i}^*)\big|=d_i^{1/p_i'}\big|\partial_{i,j_i} f(\vc x^*)\big| \leq d_i^{1/p_i'} \max_{l_i \in [d_i]} \big|\partial_{i,l_i} f(\vc x^*)\big|.
\end{equation*}
Since $\norm{\vc x^*_k}_{p_k}=1$, we must have $|x^*_{k,j_k}|\leq 1$ for every $k\in [m]$ and $j_k\in[d_k]$, i.e. $|\vc x^*| \leq \vc e$, where $\vc e \in \sR^d$ is the vector who's entries are all $1$. Hence, for every $l_i \in [d_i]$, it holds $\big|\partial_{i,l_i} f(\vc x^*)\big| \leq \partial_{i,l_i} \tilde f\big(|\vc x^*|\big)\leq \partial_{i,l_i}\tilde f(\mathbf{e})$. Taking the minimum over all $i \in [m]$ concludes the proof.
\end{proof}
Note that for every $f\neq 0$ and every $1<p_1,\ldots,p_m<\infty$, there exists always at least one singular value.
\begin{ex}
Let $A \in \R^{2 \times 2}$ be the matrix defined by $A_{1,1}=A_{2,2}=0$ and $A_{1,2}=-A_{2,1} = 1$. For every $1 < p_1,p_2 < \infty$, the vector $
\vc x^* = \left(2^{-\frac{1}{p_1}},-2^{-\frac{1}{p_1}},2^{-\frac{1}{p_2}},2^{-\frac{1}{p_2}}\right) \in \R^2 \times \R^2$ is a $\ell^{p_1,p_2}$ singular vector of $A$ associated to the singular value $\lambda = 2^{1-\frac{1}{p_1}-\frac{1}{p_2}}$.
\end{ex}
We formulate now an equivalent characterization of $\spec(f)$. Note that for every $\vc z_i,\vc y_i\in\R^{d_i}$ we have $
\grad_if(\vc x_1,\ldots, \vc x_{i-1},\vc y_i,\vc x_{i+1},\ldots, \vc x_m)=\grad_if(\vc x_1,\ldots, \vc x_{i-1},\vc z_i,\vc x_{i+1},\ldots, \vc x_m),$ thus we may, without ambiguities, abuse notation and write $\grad_if(\vc x)$ regardless if $\vc x \in \sR^d$ or $\vc x\in \sR^{d-d_i}$. For $i\in[m]$ and $\vc x \in\sR^{d-d_i}$, let $S_i$ and $Q_i$ be the functions defined by $S_i(\vc x)\coloneqq\norm{\vc x_1}_{p_1} \cdot \ldots \cdot \norm{\vc x_{i-1}}_{p_{i-1}}\norm{\vc x_{i+1}}_{p_{i+1}} \cdot \ldots \cdot \norm{\vc x_m}_{p_m}$
 and
\begin{equation*}
Q_i\colon\big\{\vc x\in \sR^{d-d_i}\mid S_i(\vc x)> 0\big\}\to\R,\quad \vc x \mapsto \frac{\norm{\grad_if(\vc x)}_{p_i'}}{S_i(\vc x)}.
\end{equation*}
Observe that $\vc x \in \sR^{d-d_i}$ such that $S_i(\vc x) \neq 0$ and $\grad_if(\vc x)\neq 0$ is a critical point of $Q_i$ if and only if it satisfies
\begin{equation}\label{dual_eigensys}
S_i(\vc x)^{p_i'}\norm{\vc x_k}^{p_k}_{p_k}\ps{\psi_{p_i'}\big(\grad_{i}f(\vc x)\big)}{\grad_{i}\partial_{k,j_k}f(\vc x)} = Q_i(\vc x)^{p_i'}\psi_{p_k}(x_{k,j_k}), \qquad \forall k \in [m] \setminus \{i\}, j_k\in  [d_k].
\end{equation}
Here, we used $\partial_{i,j_i}\partial_{k,j_k}f(\vc x) = \partial_{k,j_k}\partial_{i,j_i}f(\vc x)$ as $f \in C^2(\sR^d)$. It follows that every $\vc x \in \NS^{d-d_i}$ such that $\grad_if(\vc x)\neq 0$ is a critical point of $Q_i$ if and only if $s_{i,k}(\vc x) = Q_{i}(\vc x)^{p_i'(p_k'-1)} \vc x_k$ for every $k \in [m]\setminus \{i\}$, where
\begin{equation}\label{def_s}
s_{i,k}\colon\sR^{d-d_i}\to\R^{d_k},\quad \vc x\mapsto\psi_{p'_k}\bigg(\grad_{k}f \bigl\vc x_1,\ldots, \vc x_{i-1},\psi_{p_i'}\big(\grad_{i}f(\vc x)\big),\vc x_{i+1},\ldots, \vc x_m\bigr \bigg).
\end{equation}
In the next proposition we show equivalence between the critical points of $Q_i$ and those of $Q$.
\begin{prop}\label{dual_egual}
Let $1<p_1,\ldots,p_m< \infty$ and $f\in \R^{d_1 \times \ldots \times d_m}$ with $f \neq 0$, denote by $C^*\subset \spec(f) \times \NS^d$ the set of pairs $(\lambda, \vc x^*)$ where $\vc x^*$ is a critical point of $Q$ associated to the critical value $\lambda $. Furthermore, for $i \in [m]$, let $\spec_i(f) \coloneqq\big\{ \lambda \in \R\setminus\{0\}\mid\exists\vc x \in \NS^{d-d_i} \text{ with } s_{i,k}(\vc x)= \lambda^{p_i'(p'_k-1)} \vc x_k \ \forall k\in [m]\setminus\{i\}\big\}
$ and $C^*_i \subset \spec_i(f) \times \NS^{d-d_i}$ be the set of all pairs $(\lambda, \vc x^*)$ where $\vc x^*$ is a critical point of $Q_i$ associated to the critical value $\lambda$.
Then $\spec(f)=\spec_i(f)$ and, with $
\varsigma_i(\vc x) \coloneqq \sign\bigg(f\left(\vc x_1,\ldots,\vc x_{i-1},\psi_{p_i'}\big(\lambda^{-1}\ \grad_{i}f(\vc x)\big),\vc x_{i+1},\ldots,\vc x_m\right)\bigg)$, the function
\begin{equation*}
\Phi_i\colon C_i^* \to C^*,\quad (\lambda,\vc x) \mapsto \bigg(\lambda, \left(\vc x,\ldots,\vc x_{i-1},\psi_{p_i'}\big(\varsigma_i(\vc x)\ \lambda^{-1}\ \grad_{i}f(\vc x)\big),\vc x_{i+1},\ldots,\vc x_m\right)\bigg)
\end{equation*}
is a bijection.
\end{prop}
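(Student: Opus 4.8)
The plan is to prove the two spectral identities and the bijection simultaneously, by exhibiting the inverse of $\Phi_i$ explicitly: it is the ``restriction'' map $\Psi_i\colon C^*\to C_i^*$, $(\lambda,\vc x)\mapsto(\lambda,\hat{\vc x})$ with $\hat{\vc x}\coloneqq(\vc x_1,\dots,\vc x_{i-1},\vc x_{i+1},\dots,\vc x_m)$. Concretely I will show (i) $\Psi_i$ maps $C^*$ into $C_i^*$, which gives $\spec(f)\subseteq\spec_i(f)$; (ii) $\Phi_i$ maps $C_i^*$ into $C^*$, which gives $\spec_i(f)\subseteq\spec(f)$ and hence $\spec(f)=\spec_i(f)$; and (iii) $\Psi_i\circ\Phi_i=\Id$ and $\Phi_i\circ\Psi_i=\Id$. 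Throughout I use: the characterizations stated just above the proposition (for $\vc x\in\NS^d$: critical point of $Q$ $\iff$ $f(\vc x)\neq0$ and $\sigma_i(\vc x)=Q(\vc x)^{p_i'-1}\vc x_i$ for all $i$; for $\vc x\in\NS^{d-d_i}$: critical point of $Q_i$ $\iff$ $\grad_if(\vc x)\neq0$ and $s_{i,k}(\vc x)=Q_i(\vc x)^{p_i'(p_k'-1)}\vc x_k$ for all $k\in\msi$); the fact $\lambda>0$ from Proposition \ref{boundedspectrum}; the identities $\psi_q(\psi_{q'}(\vc v))=\vc v$, $\psi_q(t\vc v)=|t|^{q-1}\sgn(t)\psi_q(\vc v)$ for a scalar $t$, $\psi_q$ odd, $\norm{\psi_q(\vc v)}_r=\norm{\vc v}_{r(q-1)}^{q-1}$; the H\"older arithmetic $(p-1)(p'-1)=1$, $p'(p-1)=p$, $p(p'-1)=p'$; and the multilinearity of $f$ (linear in each slot, so $\grad_kf$ with $k\neq i$ is linear in $\vc x_i$ while $\grad_if$ is independent of $\vc x_i$, and $f(\vc x)=\ps{\grad_if(\vc x)}{\vc x_i}$).

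\emph{Step (i).} Let $(\lambda,\vc x)\in C^*$ and set $\epsilon\coloneqq\sgn(f(\vc x))\in\{-1,1\}$. The $i$-th eigenequation $\sigma_i(\vc x)=\lambda^{p_i'-1}\vc x_i$ rewrites, using \eqref{def_sig}, as $\psi_{p_i'}(\grad_if(\vc x))=\epsilon\lambda^{p_i'-1}\vc x_i$, equivalently (apply $\psi_{p_i}$) $\grad_if(\vc x)=\epsilon\lambda\psi_{p_i}(\vc x_i)$; in particular $\grad_if(\hat{\vc x})=\grad_if(\vc x)\neq0$ and $\norm{\grad_if(\hat{\vc x})}_{p_i'}=\lambda$, so $Q_i(\hat{\vc x})=\lambda$. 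Substituting $\psi_{p_i'}(\grad_if(\vc x))=\epsilon\lambda^{p_i'-1}\vc x_i$ into the definition \eqref{def_s} of $s_{i,k}$, pulling the scalar $\epsilon\lambda^{p_i'-1}$ first out of $\grad_kf$ (linearity in slot $i$) and then out of $\psi_{p_k'}$, and using the $k$-th eigenequation in the form $\psi_{p_k'}(\grad_kf(\vc x))=\epsilon\lambda^{p_k'-1}\vc x_k$, one obtains $s_{i,k}(\hat{\vc x})=\lambda^{(p_i'-1)(p_k'-1)+(p_k'-1)}\vc x_k=\lambda^{p_i'(p_k'-1)}\vc x_k$ for every $k\in\msi$. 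Hence $\hat{\vc x}$ is a critical point of $Q_i$ with value $\lambda$, i.e. $(\lambda,\hat{\vc x})\in C_i^*$, and $\lambda\in\spec_i(f)$.

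\emph{Step (ii).} Let $(\lambda,\vc x)\in C_i^*$, so $\grad_if(\vc x)\neq0$, $Q_i(\vc x)=\lambda$ and $s_{i,k}(\vc x)=\lambda^{p_i'(p_k'-1)}\vc x_k$ for $k\in\msi$, and let $\vc y$ agree with $\vc x$ on every slot $k\neq i$ with $\vc y_i\coloneqq\psi_{p_i'}(\varsigma_i(\vc x)\lambda^{-1}\grad_if(\vc x))$. The expression inside $\varsigma_i(\vc x)$ equals, by $f(\vc x)=\ps{\grad_if(\vc x)}{\vc x_i}$ and the scaling rule for $\psi_{p_i'}$, $\lambda^{-(p_i'-1)}\ps{\grad_if(\vc x)}{\psi_{p_i'}(\grad_if(\vc x))}=\lambda^{-(p_i'-1)}\norm{\grad_if(\vc x)}_{p_i'}^{p_i'}>0$, so $\varsigma_i(\vc x)=1$ and $f(\vc y)=\lambda\neq0$; also $\norm{\vc y_i}_{p_i}=\lambda^{-(p_i'-1)}\norm{\grad_if(\vc x)}_{p_i'}^{p_i'-1}=1$ because $\norm{\grad_if(\vc x)}_{p_i'}=Q_i(\vc x)=\lambda$, hence $\vc y\in\NS^d$ and $Q(\vc y)=\lambda$. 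For the eigensystem: the case $k=i$ is $\sigma_i(\vc y)=\sgn(f(\vc y))\psi_{p_i'}(\grad_if(\vc y))=\psi_{p_i'}(\grad_if(\vc x))=\lambda^{p_i'-1}\vc y_i$, straight from the definition of $\vc y_i$; for $k\neq i$, pulling $\lambda^{-(p_i'-1)}$ out of $\grad_kf(\vc y)$ and out of $\psi_{p_k'}$ and recognizing $s_{i,k}(\vc x)$ via \eqref{def_s} gives $\psi_{p_k'}(\grad_kf(\vc y))=\lambda^{-(p_i'-1)(p_k'-1)}s_{i,k}(\vc x)=\lambda^{p_k'-1}\vc x_k$, i.e. $\sigma_k(\vc y)=\lambda^{p_k'-1}\vc y_k$. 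So $(\lambda,\vc y)=\Phi_i(\lambda,\vc x)\in C^*$ and $\lambda\in\spec(f)$; combined with Step (i), $\spec(f)=\spec_i(f)$.

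\emph{Step (iii) and the main obstacle.} Since $\Phi_i$ changes only slot $i$ while $\Psi_i$ deletes exactly that slot, $\Psi_i\circ\Phi_i=\Id_{C_i^*}$ is immediate. For $\Phi_i\circ\Psi_i=\Id_{C^*}$ one starts from $(\lambda,\vc x)\in C^*$, forms $\hat{\vc x}$, and must check $\psi_{p_i'}(\varsigma_i(\hat{\vc x})\lambda^{-1}\grad_if(\hat{\vc x}))=\vc x_i$: by Step (i), $\psi_{p_i'}(\lambda^{-1}\grad_if(\hat{\vc x}))=\sgn(f(\vc x))\vc x_i$, so the argument of $\varsigma_i(\hat{\vc x})$ is $f$ with slot $i$ replaced by $\sgn(f(\vc x))\vc x_i$, which equals $\sgn(f(\vc x))f(\vc x)=|f(\vc x)|>0$, hence $\varsigma_i(\hat{\vc x})=1$ and the reconstructed block returns $\vc x_i$ (the correction factor $\varsigma_i$ is introduced precisely so that this normalization works). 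I expect the bookkeeping of these sign factors $\sgn(f(\cdot)),\varsigma_i\in\{-1,1\}$ to be the only real difficulty: each of them passes through $\psi_q$, which is odd but genuinely nonlinear, so every use of $\psi_q(t\vc v)=|t|^{q-1}\sgn(t)\psi_q(\vc v)$ must be carried out with the correct scalar $t$ and the correct sign, and each must also be tracked through the multilinear substitutions inside $\grad_kf$; once every power of $\lambda$ is written out explicitly the H\"older arithmetic is routine. Together, Steps (i)--(iii) show $\Phi_i$ is a bijection and $\spec(f)=\spec_i(f)$.
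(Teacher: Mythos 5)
Your overall route is the paper's own: verify that $\Phi_i$ maps $C_i^*$ into $C^*$ (your Step (ii) is the paper's well-definedness computation, carried out in more detail), and invert it by the map that deletes the $i$-th block (your Steps (i) and (iii) correspond to the paper's one-line injectivity/surjectivity remarks; your Step (i), showing that the restriction of a critical point of $Q$ is a critical point of $Q_i$ with the same critical value, is a check the paper leaves implicit but needs). Steps (i) and (ii) are correct, all the $\lambda$-exponent bookkeeping included, and $\Psi_i\circ\Phi_i=\Id_{C_i^*}$ is indeed immediate.

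The gap is in the other half of Step (iii), and it is exactly the sign you flagged as ``the main obstacle'' and then waved away. By your own identity $\psi_{p_i'}\big(\lambda^{-1}\grad_if(\hat{\vc x})\big)=\sgn\big(f(\vc x)\big)\,\vc x_i$, and since $\varsigma_i(\hat{\vc x})=+1$ (its argument is $|f(\vc x)|>0$, as you compute), the reconstructed $i$-th block of $\Phi_i\big(\Psi_i(\lambda,\vc x)\big)$ is $\sgn\big(f(\vc x)\big)\,\vc x_i$, not $\vc x_i$; the factor $\varsigma_i$ corrects nothing here precisely because it equals $+1$. Hence $\Phi_i\circ\Psi_i=\Id$ only on the part of $C^*$ where $f(\vc x^*)>0$. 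Critical points with $f(\vc x^*)<0$ do belong to $C^*$ (flipping the sign of a single block of a critical point of $Q$ yields another critical point with the same critical value, so such points exist whenever $C^*\neq\emptyset$), and they are not in the image of $\Phi_i$ at all: the untouched blocks force the only candidate preimage to be $(\lambda,\hat{\vc x}^*)$, whose image is the companion point with $i$-th block $-\vc x_i^*$. Concretely, for $m=2$, $f$ the $2\times 2$ identity matrix, $p_1=p_2=2$, $i=1$, $\lambda=1$ and $\vc x^*=\big((1,0),(-1,0)\big)$, one has $(\lambda,\vc x^*)\in C^*$ but $\Phi_1\big(1,(-1,0)\big)=\big(1,((-1,0),(-1,0))\big)\neq(\lambda,\vc x^*)$. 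To be fair, the paper's proof asserts the same unproved identity $(\lambda,\vc x^*)=\Phi_i\big(\lambda,(\vc x_1^*,\ldots,\vc x_{i-1}^*,\vc x_{i+1}^*,\ldots,\vc x_m^*)\big)$, so you have reproduced its argument together with its defect; surjectivity (hence the bijection) holds verbatim only after restricting $C^*$ to critical points with $f(\vc x^*)>0$, or after identifying critical points differing by the sign of the $i$-th block, which is all that is needed downstream since $\Phi_i^{-1}$ is only ever applied to nonnegative maximizers. Note that the equality $\spec(f)=\spec_i(f)$ is unaffected: your Steps (i) and (ii) already give both inclusions.
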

\begin{proof} 
First, we show that $\Phi_i$ is well defined. Let $(\lambda,\vc x) \in C_i^*$ and $(\lambda, \vc x^*) \coloneqq \Phi_i(\lambda,\vc x)$. Using Equation \eqref{dual_eigensys}, $\grad_if(\vc x) = \grad_if(\vc x^*), \vc x_i^* =\psi_{p_i'}\big(\varsigma_i(\vc x)\ \lambda^{-1}\ \grad_{i}f(\vc x)\big)$ and $\grad_i\partial_{k,j_k}f(\vc x) = \grad_i\partial_{k,j_k}f(\vc x^*)$, one can show that $\lambda \psi_{p_k}(\vc x^*_k) = \varsigma_i(\vc x)\grad_kf(\vc x^*)$ for every $k\in[m], j_k \in [d_k]$. Furthermore, $\sgn\big(f(\vc x^*)\big) =\sgn\big(\ps{\grad_{i}f(\vc x^*)}{\vc x_i^*}\big)=\varsigma_i(\vc x) \sign\left(\lambda^{1-p'_i}\norm{\grad_{i}f(\vc x)}_{p_i'}^{p_i'}\right) = \varsigma_i(\vc x)$ and $\norm{\vc x_i^*}_{p_i}=1$ because $\norm{\grad_if(\vc x)}_{p_i'}^{p_i'-1}=Q_i(\vc x)^{p_i'-1}=\lambda^{p_i'-1}$, so $(\lambda,\vc x^*)\in C^*$. Injectivity is straightforward by definition of $\Phi_i$ and surjectivity is shown by noticing that if $(\lambda,\vc x^*) \in C^*$ then $(\lambda,\vc x^*)=\Phi_i\big(\lambda, (\vc x_1^*,\ldots,\vc x_{i-1}^*,\vc x_{i+1}^*,\ldots, \vc x^*_m )\big) $. Finally, the fact that $\Phi_i$ is a bijection implies $\spec(f) = \spec_i(f)$.
\end{proof}
The previous proposition implies that the maximum value of $Q_i$ is $\norm{f}_{p_1,\ldots,p_m}$. Moreover, if we know a maximizer of $Q_i$ then we can construct a maximizer of $Q$ and vice-versa. This result can be seen as a generalization of the variational characterization of the singular values and singular vectors of matrix (see e.g. Theorem 8.3-1 \cite{Golub}).
\section{Nonnegative tensors and positive singular vectors}\label{nnegtens}
Now, we focus our study on tensors $f\in \R^{d_1,\ldots,d_m}$ with nonnegative coefficients, i.e. $f\geq 0$. 
\begin{lem}\label{order_preserving}
Let $f\in \R^{d_1 \times \ldots \times d_m}$ and $\vc x, \vc y\in\sR^{d}_{+}$ such that $f \geq 0$ and $0 \leq \vc x \leq \vc y$, then $0\leq f(\vc x)\leq f(\vc y)$, $0\leq \grad_k f(\vc x) \leq \grad_k f(\vc y)$, $0\leq\sigma_k(\vc x)\leq \sigma_{k}(\vc y)$ and $0 \leq s_{i,k}(\vc x)\leq s_{i,k}(\vc y)$ for every $k,i\in [m]$ with $k \neq i$. 
\end{lem}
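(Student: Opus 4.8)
The plan is to reduce everything to two elementary facts: a multilinear form with nonnegative coefficients is coordinatewise nondecreasing on the nonnegative orthant (and so is each of its partial derivatives, since by the identity $\partial_{k,j_k}f(\vc x)=f(\vc x_1,\ldots,\vc x_{k-1},\vc e_{(k,j_k)},\vc x_{k+1},\ldots,\vc x_m)$ recalled in Section~\ref{Notations} these are again such forms in the remaining variables); and that for $1<q<\infty$ the map $\psi_q$ restricted to $[0,\infty)$ is nonnegative and nondecreasing, mapping $[0,\infty)$ into $[0,\infty)$.

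First I would establish $0\le f(\vc x)\le f(\vc y)$: in $f(\vc x)=\sum_{j_1,\ldots,j_m}f_{j_1,\ldots,j_m}x_{1,j_1}\cdots x_{m,j_m}$ every summand is a product of the nonnegative number $f_{j_1,\ldots,j_m}$ with the nonnegative factors $x_{k,j_k}$, hence is nonnegative, and replacing each $x_{k,j_k}$ by the larger $y_{k,j_k}$ cannot decrease it. The identical argument applied to each component $\partial_{k,j_k}f$, which is a multilinear form with nonnegative coefficients, gives $0\le\grad_k f(\vc x)\le\grad_k f(\vc y)$. For $\sigma_k$, since $f\ge 0$ and $\vc x,\vc y\ge 0$ we have $f(\vc x),f(\vc y)\ge 0$, so $\sgn(f(\vc x)),\sgn(f(\vc y))\in\{0,1\}$ and $\sgn(f(\vc x))\le\sgn(f(\vc y))$ because $f(\vc x)\le f(\vc y)$; combining $0\le\grad_k f(\vc x)\le\grad_k f(\vc y)$ with the monotonicity and nonnegativity of $\psi_{p_k'}$ on $[0,\infty)$ yields $0\le\psi_{p_k'}(\grad_k f(\vc x))\le\psi_{p_k'}(\grad_k f(\vc y))$, and multiplying by the scalars $\sgn(f(\vc x))\le\sgn(f(\vc y))$ in $\{0,1\}$ gives $0\le\sigma_k(\vc x)\le\sigma_k(\vc y)$; the only case needing a word is $\sgn(f(\vc x))=0$, where the left inequality is trivial.

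Finally, for $s_{i,k}$ with $k\ne i$, I would note that $\grad_i f(\vc x)\ge 0$ forces $\psi_{p_i'}(\grad_i f(\vc x))\ge 0$, and $0\le\grad_i f(\vc x)\le\grad_i f(\vc y)$ together with monotonicity of $\psi_{p_i'}$ gives $0\le\psi_{p_i'}(\grad_i f(\vc x))\le\psi_{p_i'}(\grad_i f(\vc y))$. Substituting this vector into the $i$-th block of $\grad_k f$, the map $\grad_k f(\vc x_1,\ldots,\vc x_{i-1},\,\cdot\,,\vc x_{i+1},\ldots,\vc x_m)$ is coordinatewise nondecreasing in every argument, all of which move from a nonnegative value at $\vc x$ to a larger one at $\vc y$, so its value is nonnegative and does not decrease; applying the monotone nonnegative map $\psi_{p_k'}$ once more yields $0\le s_{i,k}(\vc x)\le s_{i,k}(\vc y)$. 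There is no genuine obstacle in this lemma; the only point requiring care is keeping the monotonicity argument intact through the composition in $s_{i,k}$, where one evaluates a gradient at a point whose $i$-th block is itself the monotone image $\psi_{p_i'}(\grad_i f(\cdot))$ of the argument.
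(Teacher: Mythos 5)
Your proof is correct and fills in exactly the elementary monotonicity argument that the paper dismisses as a ``straightforward computation'': nonnegative multilinear forms and their partial derivatives are coordinatewise nondecreasing on the nonnegative orthant, $\psi_q$ is nonnegative and nondecreasing on $[0,\infty)$, and the sign factor in $\sigma_k$ is handled correctly. Nothing is missing, and the approach is the same as the paper's intended one.
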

\begin{proof}
Straightforward computation.
\end{proof}
We recall the definition of irreducible and weakly irreducible tensors.
\begin{defi}[\cite{Fried}]\label{def_irr}
Let $f \geq 0$ and  $G(f)=\big(V, E(f)\big)$ an undirected $m$-partite graph with $V\coloneqq\big(\{1\}\times [d_1]\big)\cup\ldots\cup\big(\{m\}\times [d_m]\big)$ and such that for every $k,l\in[m]$ with $k\neq l$, we have $\big((k,j_k),(l,j_l)\big)\in E(f)$ if and only if there exist $j_\nu \in [d_\nu ]$ for $\nu \in[m]\setminus\{ k,l\}$ with $f_{j_1, \ldots, j_m}> 0$.
\begin{enumerate}[i)]
\item We say that $f$ is irreducible if for each proper nonempty subset $\emptyset \neq J \subsetneq V$ the following holds: Let $I = V\setminus J$, then there exist $(k, j_k)\in J$ and $(l,j_l) \in I$ for $l \in [m]\setminus\{k\}$ such that $f_{j_1,\ldots,j_m} > 0$.
\item We say that $f$ is weakly irreducible if $G(f)$ is connected.
\end{enumerate}
\end{defi}
The next proposition lists some useful properties of nonnegative tensors. 
\begin{prop}\label{Characterizer_prop}
For $1< p_1,\ldots,p_m<\infty, f\in \R^{d_1 \times \ldots \times d_m}$ and 
$\ba\coloneqq (\alpha_0,\alpha_1,\ldots,\alpha_m)>0$, consider the function $T_{\ba}\colon\sR^d \to \sR^d$ defined 
by $T_{\ba}(\vc z)\coloneqq \alpha_0\vc z +\big(\alpha_1\sigma_1(\vc z), \ldots,\alpha_m\sigma_m(\vc z)\big).$
 \begin{enumerate}[a)]
\item $f \geq 0 $ if and only if $\alpha_0\vc z \leq T_{\ba}(\vc z)$ for every $\vc z \in \NS^d_+$. \label{fgeq0}
\item If $f \geq 0$ is weakly irreducible, then $\alpha_0\vc z < T_{\ba}(\vc z)$ for every $\vc z \in \NS^d_{++}$. \label{magiclema}
\item $f\geq 0$ is irreducible if and only if there is some $n \in \N$ such that $T_{\ba}^n(\vc z)> 0$ for every $\vc z \in \NS^d_+$, where $T_{\ba}^n(\vc x) $ is recursively defined by $T_{\ba}^n(\vc x) = T_{\ba}^{n-1}(T_{\ba}(\vc x))$. \label{f_irr}
\item $f > 0$ if and only if $T_{\ba}(\vc z)> 0$ for every $\vc z \in \NS^d_+$. \label{f>0}
\end{enumerate}
\end{prop}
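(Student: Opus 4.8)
The plan is to reduce all four items to elementary monotonicity and support‑propagation properties of $T_\ba$ on the positive cone. First I would unwind each inequality: since $\alpha_0,\ldots,\alpha_m>0$, for $\vc z\in\sR^d_+$ one has $\alpha_0\vc z\le T_\ba(\vc z)$ iff $\sigma_i(\vc z)\ge 0$ for all $i\in[m]$, $\alpha_0\vc z< T_\ba(\vc z)$ iff $\sigma_i(\vc z)>0$ for all $i$, and $T_\ba(\vc z)>0$ iff $\alpha_0 z_{i,j_i}+\alpha_i\sigma_i(\vc z)_{j_i}>0$ for all $i$ and $j_i$. So everything is governed by the sign pattern of $\sigma_i(\vc z)=\sgn(f(\vc z))\psi_{p_i'}(\grad_if(\vc z))$, and throughout I would use the identities $f(\vc z)=\ps{\grad_if(\vc z)}{\vc z_i}$ and $\partial_{i,j_i}f(\vc z)=f(\vc z_1,\ldots,\vc z_{i-1},\vc e_{(i,j_i)},\vc z_{i+1},\ldots,\vc z_m)$, together with the facts that $\psi_q$ is odd, strictly sign‑preserving, and positively homogeneous, and that $\sigma_i(t\vc z)=t^{(m-1)(p_i'-1)}\sigma_i(\vc z)$ for $t>0$.

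The forward implications in (a), (b) and (d) are then direct. If $f\ge 0$, then by Lemma~\ref{order_preserving} both $f(\vc z)\ge 0$ and $\grad_if(\vc z)\ge 0$ for every $\vc z\in\sR^d_+$, hence $\psi_{p_i'}(\grad_if(\vc z))\ge 0$ and $\sigma_i(\vc z)\ge 0$; this is (a). If moreover $f>0$ and $\vc z\in\NS^d_+$, then each $\vc z_k$ is nonnegative and nonzero, so for any $i$ and $j_i$ one can pick indices $k_\nu$ (with $k_i=j_i$) so that $z_{\nu,k_\nu}>0$ for every $\nu\neq i$; since all entries of $f$ are strictly positive, $\partial_{i,j_i}f(\vc z)\ge f_{k_1,\ldots,k_m}\prod_{\nu\neq i}z_{\nu,k_\nu}>0$ and, similarly, $f(\vc z)>0$, so $\sigma_i(\vc z)>0$ and $T_\ba(\vc z)=\alpha_0\vc z+(\alpha_1\sigma_1(\vc z),\ldots,\alpha_m\sigma_m(\vc z))>0$ because $\alpha_0\vc z\ge 0$. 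For (b), weak irreducibility says that $G(f)$ is connected, so every vertex $(i,j_i)$ lies on an edge, i.e.\ there is a strictly positive entry of $f$ whose $i$-th index is $j_i$; the same computation for $\vc z\in\NS^d_{++}$ then gives $\grad_if(\vc z)>0$ and $f(\vc z)>0$, whence $\sigma_i(\vc z)>0$ and $\alpha_0\vc z<T_\ba(\vc z)$.

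For the converses of (a) and (d) I would argue by contraposition, exhibiting a bad test point. If $f$ has a negative (resp.\ non-positive) entry $f_{j_1,\ldots,j_m}$, start at the canonical tuple $\vc z=(\vc e_{(1,j_1)},\ldots,\vc e_{(m,j_m)})$, where $f(\vc z)=f_{j_1,\ldots,j_m}$ and $\partial_{i,l}f(\vc z)$ is the entry of $f$ obtained from $(j_1,\ldots,j_m)$ by replacing $j_i$ with $l$; if this point does not already exhibit a coordinate of $T_\ba(\vc z)-\alpha_0\vc z$ (resp.\ of $T_\ba(\vc z)$ at a vanishing coordinate of $\vc z$) of the wrong sign, perturb $\vc z$ within $\NS^d_+$ so that $f(\vc z)$ and some partial $\partial_{i,l}f(\vc z)$ acquire opposite nonzero signs, contradicting the assumed inequality. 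For the converse of (c), I would first recover $f\ge 0$ by an argument of the same kind, and then read the support‑spreading of $T_\ba$ described below backwards, producing for every proper nonempty $J\subsetneq V$ a strictly positive entry of $f$ linking $J$ to its complement, i.e.\ the separation property of Definition~\ref{def_irr}(i).

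The forward direction of (c) is the step I expect to be the main obstacle. The idea is that $T_\ba$ is order‑preserving on $\sR^d_+$ by Lemma~\ref{order_preserving} and satisfies $T_\ba(\vc z)\ge\alpha_0\vc z$ by (a), so the supports of the iterates $T_\ba^k(\vc z)$ are non‑decreasing; one must show that one application of $T_\ba$ enlarges the support exactly along the edges of $G(f)$ — the coordinate $(i,j_i)$ enters the support as soon as a strictly positive entry of $f$ joins it to the support of the current iterate, for then the corresponding partial of $f$ is positive and $\sigma_i$ detects it — and that, since $\vc z\in\NS^d_+$ keeps these partials bounded away from $0$ on the growing support, this occurs uniformly in $\vc z$. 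Irreducibility of $f$ then forces the support to be all of $V$ after a number of steps bounded by (essentially) the diameter of $G(f)$, uniformly in $\vc z\in\NS^d_+$, which supplies the required $n$. The delicate part is that $T_\ba$ is nonlinear and, through its scalar factor $\sgn(f(\cdot))$, forces one to control the magnitudes rather than merely the supports of the partials $\partial_{i,j_i}f$ along the orbit; once that is handled, the combinatorics reduces to the standard argument that an irreducible nonnegative incidence pattern becomes positive after at most its diameter many steps, applied here to $\alpha_0\,\Id+(\text{incidence pattern of }G(f))$.
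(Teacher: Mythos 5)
Your forward implications in a), b) and d) are essentially the paper's (Lemma \ref{order_preserving} plus choosing one strictly positive coordinate per block), but the two places you yourself flag as the substance of the proof are where the proposal breaks down. For c), the mechanism you describe is wrong: the support of $T_{\ba}(\vc z)$ does not grow along the edges of $G(f)$. To get $\partial_{i,j_i}f(\vc z)>0$ you need one positive entry $f_{j_1,\ldots,j_m}$ whose indices in \emph{all} modes $k\neq i$ lie in the support of $\vc z$; an edge joining $(i,j_i)$ to a single supported vertex is not enough. With your mechanism you would prove c) for every weakly irreducible tensor, which is false: for the tensor of Example \ref{cex_lem} (namely $f_{1,1,1}=f_{1,2,1}=f_{2,2,2}=1$, weakly irreducible but not irreducible) and $\vc z=\big((0,1),(0,1),(0,1)\big)\in\NS^d_+$ one checks $\partial_{1,1}f(\vc z)=\partial_{2,1}f(\vc z)=\partial_{3,1}f(\vc z)=0$, so every iterate $T^n_{\ba}(\vc z)$ vanishes at $(1,1),(2,1),(3,1)$. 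The paper's proof uses the stronger separation property of Definition \ref{def_irr} i) applied to the zero sets $J_{\vc z_\nu}=\{j_\nu\mid z_{\nu,j_\nu}=0\}$: irreducibility furnishes a positive entry with one index in the zero set and all remaining indices outside it, so the set of zero coordinates strictly shrinks under each application of $T_{\ba}$, and the uniform $n$ comes from counting coordinates, not from the diameter of $G(f)$. This use of irreducibility, as opposed to mere connectivity of $G(f)$, is exactly what is missing from your sketch, and your worry about controlling magnitudes is beside the point.

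Second, the converses of a), c) and d) are only promised, and the one concrete step you announce cannot be carried out, precisely because you keep the factor $\sgn(f(\cdot))$ in $\sigma_i$: then $T_{\ba}$ is invariant under $f\mapsto -f$, so if every entry of $f$ is strictly negative one has $f(\vc z)<0$ and $\grad_if(\vc z)\leq 0$, hence $\sigma_i(\vc z)\geq 0$ and even $T_{\ba}(\vc z)>0$ for every $\vc z\in\NS^d_+$, and no perturbation can make ``$f(\vc z)$ and some partial have opposite nonzero signs''. The paper's converse arguments instead exhibit explicit test vectors --- in d) it takes $z_{1,j_1}=(d_1-1)^{-1/p_1}$ for $j_1\neq l_1$, $z_{k,l_k}=1$ for $k>1$ and zero otherwise --- and read the sign of $\sigma_{i,j_i}(\vc z)$ off $\partial_{i,j_i}f(\vc z)$ alone, i.e.\ they tacitly work with $\psi_{p_i'}\big(\partial_{i,j_i}f(\vc z)\big)$; you would have to do the same before any contraposition can close (note also that $\sgn(f(\vc z))=0$ gives $T_{\ba}(\vc z)=\alpha_0\vc z$ and stalls your support spreading in c)). Finally, ``reading the spreading backwards'' for the converse of c) is not yet an argument; the paper obtains it by assuming some coordinate of the orbit stays zero for all $k$ and contradicting irreducibility through the same shrinking-zero-set computation.
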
 
\begin{proof}
Let us recall that $\vc z \in \NS^d$ implies $\vc z_k \neq 0$ for every $k \in [m]$.
\begin{enumerate}[a)]
\item If $f \geq 0$ and $\vc z\in\NS^{d}_+$, then, by Lemma \ref{order_preserving}, $\sigma_i(\vc z) \geq 0$ for any $i\in [m]$ and thus $\alpha_0\vc z \leq T_{\ba}(\vc z)$. Now, suppose that there is $\vc z \in\NS_+^d,i \in [m]$ and $j_i \in [d_i]$ such that $\big(T_{\ba}(\vc z)\big)_{i,j_i} < \alpha_0z_{i,j_i}$. It follows that $
0 >\big(T_{\ba}(\vc z)\big)_{i,j_i}-\alpha_0z_{i,j_i} = \alpha_i\sigma_{i,j_i}(\vc z)+ \alpha_0z_{i,j_i} - \alpha_0z_{i,j_i} = \alpha_i\sigma_{i,j_i}(\vc z).$ Since $\alpha_i > 0$, this is possible if and only if $\partial_{i,j_i}f(\vc z) <0$. However, $\vc z \geq 0$, so there must be some entry of $f$ which is strictly negative.
\item Let $f\geq 0$ be weakly irreducible and $\vc z \in \NS^d_{++}$. We need to show that $\sigma_{k,j_k}(\vc z)>0$ for every $k \in [m]$ and $j_k \in [d_k]$. Suppose by contradiction that there is some $i\in [m]$ and $j_i \in [d_i]$ such that $\partial_{i,j_i} f(\vc z)=0$. Then, from $\vc z> 0$ and $f \geq 0$ follows that $f_{j_1,\ldots,j_m} = 0$ for every $j_k \in [d_k]$ and $k \in [m]\setminus\{i\}$, thus the vertex $(i,j_i)$ is not connected to any other vertex in the graph $G(f)$ associated to $f$. This is a contradiction to the fact that $f$ is weakly irreducible.
\item Suppose that $f\geq 0$ is irreducible and for $\vc z \in \NS^d_{+}$ let $J_{\vc z_i} \coloneqq\{j_i \in [d_i]\mid z_{i,j_i} = 0\}$ and $J_{\vc z}\coloneqq J_{\vc z_1}\times \ldots \times J_{\vc z_m}$. Note that $\vc z \in \NS^d$ implies $J_{\vc z_i} \neq [d_i]$ for every $i \in [m]$. $J_{T_{\ba}(\vc z)}\subset J_{\vc z}$ follows from $f\geq 0$ and the property discussed above. We show that if $J_{\vc z} \neq \emptyset$, then $J_{T_{\ba}(\vc z)}$ is strictly contained in $J_{\vc z}$. So, suppose $J_{\vc z} \neq \emptyset$ and assume by contradiction that $J_{T_{\ba}(\vc z)}=J_{\vc z}$. Let $\nu\in [m]$ and $j_{\nu}\in J_{\vc z_\nu}$, then $0 = \alpha_0z_{\nu,j_{\nu}}+\alpha_\nu \ \sigma_{\nu,j_\nu}(\vc z)=\alpha_{\nu} \  \psi_{p_\nu}\big(\partial_{z_{\nu,j_{\nu}}}f(\vc z)\big)$ and 
\begin{equation*}
\Sum_{\substack{j_1\in[d_1]\setminus J_1,\ldots,j_{\nu-1}\in [d_{\nu-1}]\setminus J_{\nu-1},\\ j_{\nu+1}\in [d_{\nu+1}]\setminus J_{\nu+1},\ldots,j_m\in [d_m]\setminus J_m}}  f_{j_1,\ldots, j_{m}} \underbrace{z_{1,j_1}\cdot \ldots \cdot z_{\nu-1,j_{\nu-1}}z_{\nu+1,j_{\nu+1}}\cdot \ldots \cdot z_{m,j_m}}_{>0} = 0
\end{equation*}
 This implies that for all $\nu \in [m],j_\nu \in J_{\vc z_\nu}, k \in[m]\setminus\{\nu\}$ and $j_k \in [d_k]\setminus J_{\vc z_k}$, we have $f_{j_1, \ldots, j_m} = 0$, a contradiction to the irreducibility of $f$. Thus $J_{T_{\ba}(\vc z)}$ is strictly contained in $J_{\vc z}$. Using the fact that $\vc z \in \NS^d$ has at least $m$ nonzero components (because $\norm{\vc z_i}_{p_i} = 1$ for every $i \in [m]$), we get the existence of $n \in \N$ such that $T^n_{\ba}(\vc z)> 0$. Now, assume that there is $\vc z\in \NS^d_+, i \in [m]$ and $j_i \in [d_i]$ such that $\big(T_{\ba}^k(\vc z)\big)_{i,j_i} = 0$ for every $k\in\N$. Suppose by contradiction that $f$ is irreducible. Since $f \geq 0$, we must have $z_{i,j_i} = 0$. Thus, if $\kappa(\vc z) \in \N$ denotes the cardinality of $J_{\vc z}$, we have $\kappa(\vc z) > 0$. But we assumed $f$ to be irreducible and by the same arguments as above, for every $\vc x \in \NS^d_+$, if $\kappa(\vc x) > 0$, then $\kappa(\vc x)> \kappa\big(T_{\ba}(\vc x)\big)$. A contradiction to $\kappa\big(T_{\ba}^k(\vc z)\big) > 0$ for every $k \in \N$.
\item Suppose $f > 0$ and let $i \in [m],j_i \in [d_i]$ and $\vc z \in \NS^d_+$. Since $\vc z \in \NS^d_+$, for every $k \in [m]$ there exists some $j_k \in [d_k]$ such that $z_{k,j_k} > 0$. It follows that 
\begin{equation*}
0 <  f_{j_1,\ldots,j_m} z_{1,j_1}\cdot \ldots z_{i-1,j_{i-1}}z_{i+1,j_{i+1}}\cdot \ldots \cdot z_{m,j_m} \leq \partial_{i,j_i}f(\vc z) \qquad \forall j_i\in [d_i]
\end{equation*}
and thus $\sigma_{i,j_i}(\vc z)> 0$. This is true for every $i \in [m]$ and $j_i\in [d_i]$, thus $0 < \big(\alpha_1\sigma_1(\vc z), \ldots,\alpha_m\sigma_m(\vc z)\big) \leq T_{\ba}(\vc z)$. Now, suppose that there exist $l_1\in [d_1],\ldots,l_m \in [d_m]$ such that $f_{l_1,\ldots,l_m} \leq 0$. Consider the vector $\vc z$ defined for every $k \in [m]$ and $j_k \in [d_k]$ by $z_{k,j_k}=(d_1-1)^{-1/p_1}$ if $ k = 1$ and $j_1 \neq l_1$, $z_{k,j_k}=1$ if $k> 1$ and $j_k = l_k$, $z_{k,j_k}=0$ else. Then $\vc z \in \NS^d_+, z_{1,l_1} = 0$ and $\partial_{1,l_1} f(\vc z) = f_{l_1,\ldots,l_m}\leq 0$. It follows that $(T_{\ba}(\vc z))_{1,j_1} \leq 0$. \qedhere
\end{enumerate}
\end{proof}
Note that it is proved in Lemma 3.1, \cite{Fried}, that every irreducible tensor is weakly irreducible. The next example shows that the reverse implication of Proposition \ref{Characterizer_prop}, \ref{magiclema}) is not true in general.
\begin{ex}
Let $f\in \R^{2\times 2\times 2}$ be defined by $f_{1,1,1}=f_{2,2,2}=1$ and zero else. Then, for every $\vc x >0$, we have $\grad f(\vc x)=(\vc x_2 \circ \vc x_3, \vc x_1 \circ \vc x_3,\vc x_1 \circ \vc x_2)$, where $\circ$ denotes the Hadamard product. If $T_{\ba}$ is defined as in Proposition \ref{Characterizer_prop}, then $T_{\ba}(\vc x)>\alpha_0\vc x$ for every $\vc x>0$, however $f$ is not weakly irreducible.
\end{ex}
\begin{cor}\label{dual_s_pos}
Let $f\geq 0$ be a weakly irreducible tensor and $1<p_1,\ldots,p_m<\infty$, then for every $\vc z\in\sR^{d-d_i}_{++}$ and $i,k \in [m]$ with $i \neq k$, it holds $s_{i,k}(\vc z)>0$.
\end{cor}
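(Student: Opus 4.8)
The plan is to reduce the claim to Proposition \ref{Characterizer_prop}, part \ref{magiclema}), by recognizing $s_{i,k}(\vc z)$ as essentially the $k$-th block of a gradient of $f$ evaluated at a strictly positive point. Recall from \eqref{def_s} that
\begin{equation*}
s_{i,k}(\vc z)=\psi_{p'_k}\Big(\grad_k f\big(\vc z_1,\ldots,\vc z_{i-1},\psi_{p_i'}(\grad_i f(\vc z)),\vc z_{i+1},\ldots,\vc z_m\big)\Big).
\end{equation*}
First I would observe that since $f\geq 0$ and $\vc z\in\sR^{d-d_i}_{++}$, Lemma \ref{order_preserving} gives $\grad_i f(\vc z)\geq 0$; moreover, because $f$ is weakly irreducible, Proposition \ref{Characterizer_prop}\,\ref{magiclema}) (applied with the block $i$, after completing $\vc z$ to a point of $\NS^d_{++}$ by any choice of unit-norm positive $\vc z_i$, which does not affect $\grad_i f$) shows $\sigma_i(\vc z)=\psi_{p_i'}(\grad_i f(\vc z))>0$ componentwise. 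Since $\psi_{p_i'}$ is strictly increasing and fixes the sign, $\grad_i f(\vc z)>0$ entrywise.

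Next I would form the vector $\vc y\coloneqq\big(\vc z_1,\ldots,\vc z_{i-1},\psi_{p_i'}(\grad_i f(\vc z)),\vc z_{i+1},\ldots,\vc z_m\big)$ and extend it to a point $\widehat{\vc y}\in\sR^d$ by inserting an arbitrary strictly positive $i$-th block; by the previous paragraph $\widehat{\vc y}>0$. Then $\grad_k f(\vc y)=\grad_k f(\widehat{\vc y})$ for $k\neq i$ (the $k$-th block of the gradient does not depend on $\vc y_k$, and here it does not depend on the $i$-th block either), and again by weak irreducibility and Proposition \ref{Characterizer_prop}\,\ref{magiclema}) applied with block $k$, we get $\sigma_k(\widehat{\vc y})=\psi_{p_k'}(\grad_k f(\widehat{\vc y}))>0$, i.e. $\grad_k f(\vc y)>0$ entrywise. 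Finally, applying $\psi_{p'_k}$ (strictly increasing, sign-preserving) yields $s_{i,k}(\vc z)=\psi_{p'_k}(\grad_k f(\vc y))>0$, which is the assertion.

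The only mild subtlety — and the one point I would be careful about — is the passage between $\sR^{d-d_i}$ and $\sR^d$: I must check that the quantities $\grad_i f(\vc z)$ and $\grad_k f(\vc y)$ for $k\ne i$ genuinely do not depend on the $i$-th slot, so that Proposition \ref{Characterizer_prop}\,\ref{magiclema}), which is stated for points of $\NS^d_{++}$, can be invoked legitimately on an arbitrary positive completion. This is exactly the observation already made in the excerpt just before \eqref{dual_eigensys} (that $\grad_i f(\vc x)$ is well defined for $\vc x\in\sR^{d-d_i}$), together with multilinearity of $f$ in the $i$-th variable, so no real obstacle remains; the rest is the monotonicity of $\psi_q$ and strict positivity from weak irreducibility.
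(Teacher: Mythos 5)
Your overall strategy --- first establish $\psi_{p_i'}\big(\grad_i f(\vc z)\big)>0$ from weak irreducibility, then apply the same positivity statement a second time to the inner argument of $s_{i,k}$ --- is exactly the paper's proof, which writes $s_{i,k}(\vc z)=\sigma_k\big(\vc z_1,\ldots,\vc z_{i-1},\sigma_i(\vc z),\vc z_{i+1},\ldots,\vc z_m\big)$ and invokes Proposition \ref{Characterizer_prop}, \ref{magiclema}) twice. Your first step is fine (modulo a harmless normalization point that the paper also glosses over: the blocks of $\vc z\in\sR^{d-d_i}_{++}$ need not have unit norm, but by multilinearity rescaling each block by a positive constant only rescales $\grad_i f$ by a positive constant, so strict positivity is unaffected).

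The justification of your second step, however, contains a false claim. The vector $\vc y=\big(\vc z_1,\ldots,\vc z_{i-1},\psi_{p_i'}(\grad_i f(\vc z)),\vc z_{i+1},\ldots,\vc z_m\big)$ already lies in $\sR^d$ --- there is no missing $i$-th slot to fill --- and, more importantly, for $k\neq i$ the block $\grad_k f$ \emph{does} depend on the $i$-th slot: each $\partial_{k,j_k}f$ is a sum of products containing a factor $x_{i,j_i}$. What is independent of the $i$-th slot is $\grad_i f$ itself; that is the observation made before \eqref{dual_eigensys}, and it does not transfer to $\grad_k f$. Hence the identity $\grad_k f(\vc y)=\grad_k f(\widehat{\vc y})$ for an arbitrary positive replacement of the $i$-th block is wrong, and as written the positivity of $\grad_k f(\widehat{\vc y})$ does not yield positivity of $\grad_k f(\vc y)$. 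The repair is immediate and uses only what you already proved: since $\psi_{p_i'}\big(\grad_i f(\vc z)\big)>0$ by your first step and $\vc z>0$, the vector $\vc y$ itself is strictly positive, so Proposition \ref{Characterizer_prop}, \ref{magiclema}) (again after rescaling blocks to unit norm, which changes $\grad_k f(\vc y)$ only by a positive factor) applies directly to $\vc y$ and gives $\grad_k f(\vc y)>0$, whence $s_{i,k}(\vc z)=\psi_{p_k'}\big(\grad_k f(\vc y)\big)>0$. With that correction your argument coincides with the paper's.
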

\begin{proof}
Let $f \geq 0$ and $\vc z>0$, then $s_{i,k}(\vc z)=\sigma_k\big(\vc z_1,\ldots,\vc z_{i-1},\sigma_i(\vc z),\vc z_{i+1},\ldots,\vc z_m\big)$. It follows by Proposition \ref{Characterizer_prop}, \ref{magiclema}) that $\sigma_i(\vc z)>0$ and thus also $s_{i,k}(\vc z)>0$.
\end{proof}
\begin{cor}\label{dual_pass}
Let $f\in \R^{d_1\times\ldots\times d_m}$ with $f \neq 0$, $1<p_1,\ldots,p_m<\infty$,$(\lambda,\vc x^*)\in C^*_i,(\mu,\vc y^*)\in C^*$ and  $\Phi_i:C_i^* \to C^*$ as defined in Proposition \ref{dual_egual}. 
\begin{enumerate}[i)]
\item If $(\lambda,\vc x^*)\geq 0$ and $f \geq 0$, then $\Phi_i(\lambda,\vc x^*) \geq 0$. \label{phi_neg1}
\item If $(\lambda,\vc x^*)> 0$ and $f \geq 0$ is weakly irreducible, then $\Phi_i(\lambda,\vc x^*) > 0$.\label{phi_neg2}
\item If $(\mu,\vc y^*)\geq 0$, then $\Phi_i^{-1}(\mu,\vc y^*) \geq 0$.\label{phi_neg3}
\item If $(\mu,\vc y^*)>0$, then $\Phi_i^{-1}(\mu,\vc y^*)>0$. \label{phi_neg4}
\end{enumerate}
\end{cor}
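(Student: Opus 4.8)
The plan is to dispose of parts iii) and iv) first, where the claim is almost immediate, and then to reduce i) and ii) to Lemma~\ref{order_preserving} and Proposition~\ref{Characterizer_prop} through the explicit formula defining $\Phi_i$.

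For iii) and iv) I recall that in the proof of Proposition~\ref{dual_egual} surjectivity of $\Phi_i$ was obtained from the identity $(\mu,\vc y^*)=\Phi_i\big(\mu,(\vc y_1^*,\ldots,\vc y_{i-1}^*,\vc y_{i+1}^*,\ldots,\vc y_m^*)\big)$, valid for all $(\mu,\vc y^*)\in C^*$. Hence $\Phi_i^{-1}$ acts by simply deleting the $i$-th block, and since deleting a block of a nonnegative (resp.\ strictly positive) vector preserves nonnegativity (resp.\ strict positivity) — and $\mu>0$ because every element of $\spec(f)$ is positive by Proposition~\ref{boundedspectrum} — parts iii) and iv) follow at once.

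For i) and ii) I write $\Phi_i(\lambda,\vc x^*)=(\lambda,\vc z^*)$ with $\vc z^*_k=\vc x^*_k$ for $k\in\msi$ and $\vc z^*_i=\psi_{p_i'}\big(\varsigma_i(\vc x^*)\,\lambda^{-1}\grad_if(\vc x^*)\big)$, recalling that $\lambda>0$ and that $\grad_if(\vc x^*)$ depends only on the blocks indexed by $\msi$. The blocks $\vc z^*_k$ with $k\neq i$ inherit (strict) nonnegativity from $\vc x^*$, so everything reduces to the sign of $\vc z^*_i$; and since $\psi_{p_i'}$ acts coordinatewise and preserves signs while multiplication by $\lambda^{-1}>0$ keeps the nonnegative orthant and its interior invariant, that sign pattern is the one of $\varsigma_i(\vc x^*)\,\grad_if(\vc x^*)$. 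For part i), Lemma~\ref{order_preserving} gives $\grad_if(\vc x^*)\geq 0$ from $f\geq 0$ and $\vc x^*\geq 0$; the point at which $\varsigma_i(\vc x^*)$ evaluates $\sign\big(f(\cdot)\big)$ therefore lies in $\sR^d_+$, so $f\geq 0$ forces $\varsigma_i(\vc x^*)\geq 0$, and hence $\vc z^*_i\geq 0$, i.e.\ $\Phi_i(\lambda,\vc x^*)\geq 0$.

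For part ii), with the additional hypotheses $\vc x^*>0$ and $f$ weakly irreducible, I would first show $\grad_if(\vc x^*)>0$: appending a suitably normalized positive $i$-th block extends $\vc x^*$ to a point of $\NS^d_{++}$ on which $\grad_if$ takes a positive multiple of $\grad_if(\vc x^*)$ (as $\grad_if$ ignores the $i$-th block and is multilinear in the others), and Proposition~\ref{Characterizer_prop},~\ref{magiclema}), together with $f>0$ on $\sR^d_{++}$ (weak irreducibility forces some $f_{j_1,\ldots,j_m}>0$), yields $\psi_{p_i'}\big(\grad_if(\vc x^*)\big)>0$, i.e.\ $\grad_if(\vc x^*)>0$. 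Then the argument of $\varsigma_i(\vc x^*)$ has all blocks strictly positive, so $f$ is strictly positive there, $\varsigma_i(\vc x^*)=1$, and $\vc z^*_i>0$, giving $\Phi_i(\lambda,\vc x^*)>0$. The only point that is not pure bookkeeping is this strict inequality $\grad_if(\vc x^*)>0$, which relies on connectedness of $G(f)$ excluding an isolated vertex $(i,j_i)$ — exactly the mechanism already used in the proof of Proposition~\ref{Characterizer_prop},~\ref{magiclema}).
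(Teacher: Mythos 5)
Your proposal is correct and follows essentially the same route as the paper: parts iii) and iv) come from the explicit form of $\Phi_i^{-1}$ (deletion of the $i$-th block, via the surjectivity identity), part i) from nonnegativity of $\grad_i f$ and of $\varsigma_i$ at nonnegative points (Lemma \ref{order_preserving}, i.e.\ the content of Proposition \ref{Characterizer_prop}\,\ref{fgeq0})), and part ii) from strict positivity of $\grad_i f$ at strictly positive points for weakly irreducible $f$ (Proposition \ref{Characterizer_prop}\,\ref{magiclema})). You merely spell out the sign bookkeeping for $\varsigma_i$ and $\psi_{p_i'}$ that the paper's one-line proof leaves implicit.
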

\begin{proof}
\ref{phi_neg1}) follows from Proposition \ref{Characterizer_prop}, \ref{fgeq0}). \ref{phi_neg2}) follows from Proposition \ref{Characterizer_prop}, \ref{magiclema}). Finally \ref{phi_neg3}) and \ref{phi_neg4}) follow from the definition of $\Phi_i$. 
\end{proof}
As we need the existence of some $\vc x^*>0$ such that $Q(\vc x^*)=\norm{f}_{p_1,\ldots,p_m}$ for the Collatz-Wielandt analysis, we provide here a few conditions on $f$ and $1<p_1,\ldots,p_m<\infty$ in order to guarantee it.
\begin{lem}\label{exi_max_sing}
Let $f \geq 0$ and $1<p_1,\ldots,p_m<\infty$, then there exists some singular vector $\vc x^* \in\sR^d_+$ of $f$ such that $\norm{f}_{p_1,\ldots,p_m}=Q(\vc x^*)$.
\end{lem}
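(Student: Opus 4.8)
The plan is to exploit the compactness/continuity argument already used in Proposition \ref{boundedspectrum}, but to handle carefully the boundary of $\NS^d$, where $Q$ may fail to be differentiable (when $f(\vc x)=0$). First I would note that, as observed in the proof of Proposition \ref{boundedspectrum}, $\norm{f}_{p_1,\ldots,p_m}=\sup_{\vc x\in\NS^d}Q(\vc x)$ and that $Q$ restricted to the compact set $\NS^d$ attains its maximum at some point. Since $f\geq 0$, for any $\vc x\in\NS^d$ we have $Q(\vc x)=|f(\vc x)|/S(\vc x)=|f(|\vc x_1|,\ldots,|\vc x_m|)|/S(|\vc x_1|,\ldots,|\vc x_m|)=Q(|\vc x_1|,\ldots,|\vc x_m|)$ by Lemma \ref{order_preserving} (entrywise $f$ is order preserving and $f(|\vc x|)\geq |f(\vc x)|$; in fact equality of the ratio follows from $|f(\vc x)|\le f(|\vc x|)$ together with the fact that the supremum is unchanged), so without loss of generality the maximizer can be taken in $\NS^d_+$. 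Call it $\vc x^*$.

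Next I would argue that $\vc x^*$ is in fact a singular vector, i.e. a critical point of $Q$ in the sense of Equation \eqref{eigensys} (equivalently $\sigma_i(\vc x^*)=Q(\vc x^*)^{p_i'-1}\vc x_i^*$ for all $i$). The subtlety is twofold: (a) $Q$ need not be differentiable at $\vc x^*$ if $f(\vc x^*)=0$, and (b) $\vc x^*$ lies possibly on the boundary of $\NS^d_+$, so one must use a one-sided/variational argument rather than setting a gradient to zero. For (a): since $f\ge 0$ and $\vc x^*\ge 0$, if $f(\vc x^*)=0$ then $Q(\vc x^*)=0=\norm{f}_{p_1,\ldots,p_m}$, forcing $f=0$, contradicting our standing hypothesis (the lemma is vacuous or trivial if $f=0$, which I would dispatch in one line). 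So $f(\vc x^*)>0$ and $Q$ is differentiable at $\vc x^*$. For (b): I would use the argument that for each $i$ the maximizer over $\vc x_i\in\R^{d_i}_+$ with $\norm{\vc x_i}_{p_i}=1$ of $\vc x_i\mapsto f(\vc x^*_1,\ldots,\vc x_i,\ldots,\vc x^*_m)=\ps{\grad_if(\vc x^*)}{\vc x_i}$ is attained at $\vc x_i^*$; since $\grad_if(\vc x^*)\geq 0$ (Lemma \ref{order_preserving}) and the constraint set is the nonnegative part of an $\ell^{p_i}$-sphere, Hölder's inequality gives that the maximizer is $\vc x_i^*=\norm{\grad_if(\vc x^*)}_{p_i'}^{-1}\psi_{p_i'}(\grad_if(\vc x^*))$ up to the scaling enforced by $\norm{\vc x_i^*}_{p_i}=1$, which is exactly the relation $\sigma_i(\vc x^*)=Q(\vc x^*)^{p_i'-1}\vc x_i^*$ after identifying the Lagrange multiplier with $Q(\vc x^*)$ via $f(\vc x^*)=\ps{\grad_if(\vc x^*)}{\vc x_i^*}$ and $S(\vc x^*)=1$. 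Doing this for every $i\in[m]$ gives that $\vc x^*\in\NS^d_+$ satisfies the singular vector equations with value $\norm{f}_{p_1,\ldots,p_m}\in\spec(f)$.

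The main obstacle I anticipate is the rigorous treatment of the boundary case together with the non-smoothness: one cannot simply invoke "critical point of $Q$" because $\vc x^*$ may have zero components and $Q$'s domain restricted to $\NS^d_+$ is a manifold with corners. The cleanest route, as sketched above, is to bypass $Q$ entirely and run the Hölder-inequality optimality argument block-coordinatewise on the multilinear form $f$, which automatically respects the nonnegativity constraints and directly yields the system \eqref{eigensys}; the equivalence with $\sigma_i(\vc x^*)=Q(\vc x^*)^{p_i'-1}\vc x_i^*$ is then the bookkeeping already recorded after \eqref{def_sig}. I would also remark that this lemma does not claim $\vc x^*>0$ — strict positivity is exactly what the weak irreducibility hypothesis and Proposition \ref{Characterizer_prop}, \ref{magiclema}) will buy us later — so here only nonnegativity and membership in $\spec(f)$ need to be established.
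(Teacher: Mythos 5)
Your proof is correct, and its first half (compactness of $\NS^d$ plus the observations $|f(\vc x)|\le f(|\vc x|)$ and $S(|\vc x|)=S(\vc x)$ to turn a maximizer of $Q$ into a nonnegative one) is exactly the paper's argument. Where you diverge is the last step: the paper simply notes that $\tilde{\vc x}^*=|\vc x^*|$ is a maximizer of $Q$ on $\NS^d$ and hence, by definition, a critical point of $Q$, i.e.\ a singular vector; you instead verify the system \eqref{eigensys} by a block-coordinate H\"older optimality argument. Note that your worry (b) about the boundary of $\NS^d_+$ does not actually arise: nonnegativity is never imposed as a constraint --- the point $|\vc x^*|$ is a \emph{global} maximizer of $Q$ over all of $\NS^d$, which for $1<p_i<\infty$ is a product of $C^1$ spheres and so a manifold without boundary, so the usual first-order condition applies as soon as $f(|\vc x^*|)\neq 0$; this is precisely why the paper's final step is a one-liner. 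Your H\"older route is nevertheless valid and more self-contained: fixing all blocks but the $i$-th, maximality plus the equality case of H\"older (the gradient $\grad_i f(\vc x^*)\ge 0$ being nonzero since $f(\vc x^*)>0$) yields $\sigma_i(\vc x^*)=Q(\vc x^*)^{p_i'-1}\vc x^*_i$ directly, and it has the additional merit of making explicit the implicit hypothesis $f\neq 0$ (equivalently $f(\vc x^*)>0$), which the paper's proof also relies on --- both for differentiability of $Q$ at the maximizer and for the identification of critical points with singular vectors --- but does not spell out.
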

\begin{proof} 
Existence of $\vc x^* \in \NS^d$ such that $f(\vc x^*)= \norm{f}_{p_1,\ldots,p_m}$ follows from the proof of Proposition \ref{boundedspectrum}. Since $f(\vc x)\leq f(|\vc x|)$ and $S(|\vc x|)=S(\vc x)$ for every $\vc x \in \sR^d$, $\tilde{\vc x}^* \coloneqq |\vc x^*| \geq 0$ is also a maximizer of $Q$, i.e. $Q(\tilde{\vc x}^*)=\norm{f}_{p_1,\ldots,p_m}$. By definition, the singular vectors of $f$ are the critical points of $Q$ and thus $\tilde{\vc x}^*$ is a singular vector of $f$ associated to the singular value $\norm{f}_{p_1,\ldots,p_m}$.
\end{proof}
\begin{thm}\label{strictpos_irr}
Let $1<p_1,\ldots,p_m<\infty$ and $f\geq 0$ an irreducible tensor. If $\vc x\in \NS^{d}_+$ is a singular vector of $f$, then $\vc x\in\NS_{++}^d$. Moreover, there exists a singular vector $\vc x^*\in\NS^d_{++}$ of $f$ such that $Q(\vc x^*) = \norm{f}_{p_1,\ldots,p_m}$.
\end{thm}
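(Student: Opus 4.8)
The plan is to prove the positivity statement first; the existence of a strictly positive maximizer will then follow at once. So suppose $\vc x\in\NS^d_+$ is a singular vector of $f$ and put $\lambda\coloneqq Q(\vc x)$. By the characterization of critical points of $Q$ recalled just before the definition of $\spec(f)$, this means that $f(\vc x)\neq 0$ and $\sigma_i(\vc x)=\lambda^{p_i'-1}\vc x_i$ for every $i\in[m]$. Since $f\geq 0$ and $\vc x\geq 0$, we have $f(\vc x)\geq 0$, hence $f(\vc x)>0$; consequently $\sgn(f(\vc x))=1$, $\lambda=f(\vc x)>0$ (this also follows from Proposition \ref{boundedspectrum}), and $\sigma_i(\vc x)=\psi_{p_i'}(\grad_i f(\vc x))\geq 0$ for all $i$.

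Next I would argue by contradiction. Assume $\vc x\notin\NS^d_{++}$ and let $Z\coloneqq\{(i,j_i)\in V\mid x_{i,j_i}=0\}$, where $V$ is the vertex set of the graph $G(f)$ from Definition \ref{def_irr}. Then $Z\neq\emptyset$ by assumption, while $Z\neq V$ because $\norm{\vc x_i}_{p_i}=1$ for each $i$; so $Z$ is a proper nonempty subset of $V$. Applying the definition of irreducibility to $Z$ produces indices $(k,j_k)\in Z$ and $(l,j_l)\in V\setminus Z$ for every $l\in[m]\setminus\{k\}$ with $f_{j_1,\ldots,j_m}>0$. Writing $\partial_{k,j_k}f(\vc x)=f(\vc x_1,\ldots,\vc e_{(k,j_k)},\ldots,\vc x_m)$ as a sum of nonnegative terms, one of these terms equals $f_{j_1,\ldots,j_m}\prod_{l\in[m]\setminus\{k\}}x_{l,j_l}$, which is strictly positive since $(l,j_l)\in V\setminus Z$ forces $x_{l,j_l}>0$. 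Hence $\partial_{k,j_k}f(\vc x)>0$, so $\sigma_{k,j_k}(\vc x)=\psi_{p_k'}(\partial_{k,j_k}f(\vc x))>0$, contradicting $\sigma_{k,j_k}(\vc x)=\lambda^{p_k'-1}x_{k,j_k}=0$. Therefore $\vc x\in\NS^d_{++}$. (Equivalently, one could take $\ba=(1,\lambda^{1-p_1'},\ldots,\lambda^{1-p_m'})$, note that $T_{\ba}(\vc x)=2\vc x$ and that each $T_{\ba}^n(\vc x)$ is a strictly positive blockwise rescaling of $\vc x$, then invoke Proposition \ref{Characterizer_prop}, \ref{f_irr}).)

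For the second statement, Lemma \ref{exi_max_sing}, or rather its proof, which produces the maximizer on $\NS^d$, gives a singular vector $\tilde{\vc x}^*\in\NS^d_+$ with $Q(\tilde{\vc x}^*)=\norm{f}_{p_1,\ldots,p_m}$; by the first part, $\tilde{\vc x}^*\in\NS^d_{++}$, so $\vc x^*\coloneqq\tilde{\vc x}^*$ does the job. I do not anticipate a serious obstacle: the argument is essentially support bookkeeping together with one use of the definition of irreducibility. The two points requiring a little care are to check that $f(\vc x)>0$ and not merely $f(\vc x)\neq 0$ — so that the sign factor in $\sigma_k$ cannot annihilate the strict positivity of $\partial_{k,j_k}f(\vc x)$ — and that the vertex $(k,j_k)$ handed back by irreducibility is genuinely a coordinate where $\vc x$ vanishes; both are immediate once $Z$ is defined as above.
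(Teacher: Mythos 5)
Your proof is correct, and your main argument takes a slightly different (and more self-contained) route than the paper. The paper's proof is two lines: writing $\ba = \tfrac{1}{2}\big(1,\lambda^{1-p_1'},\ldots,\lambda^{1-p_m'}\big)$, the singular vector equation $\sigma_i(\vc x)=\lambda^{p_i'-1}\vc x_i$ makes $\vc x$ an exact fixed point of $T_{\ba}$, so Proposition \ref{Characterizer_prop}, \ref{f_irr}) immediately gives $\vc x = T_{\ba}^n(\vc x)>0$ for some $n$; your parenthetical remark is exactly this argument, except that your choice $\ba=(1,\lambda^{1-p_1'},\ldots,\lambda^{1-p_m'})$ only gives $T_{\ba}(\vc x)=2\vc x$, so you correctly add the homogeneity observation that the iterates stay blockwise positive rescalings of $\vc x$ (the paper's factor $\tfrac12$ is precisely what removes this extra step). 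Your primary argument instead applies Definition \ref{def_irr} directly to the zero set $Z$ of $\vc x$: irreducibility hands you $(k,j_k)\in Z$ and $(l,j_l)\in V\setminus Z$ for $l\neq k$ with $f_{j_1,\ldots,j_m}>0$, whence $\partial_{k,j_k}f(\vc x)>0$ contradicts $\sigma_{k,j_k}(\vc x)=\lambda^{p_k'-1}x_{k,j_k}=0$. This is essentially the combinatorial core of the paper's Proposition \ref{Characterizer_prop}, \ref{f_irr}) (whose proof shrinks the zero set by the same mechanism), but you exploit the fact that the zero set of a singular vector is invariant, so a single application of irreducibility suffices and no iteration of $T_{\ba}$ is needed; what you lose is the reuse of an already-proved lemma, what you gain is a shorter, elementary argument. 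Your attention to $f(\vc x)>0$ (so that the sign factor in $\sigma_k$ is $+1$) and to $Z\neq V$ is exactly the right bookkeeping, and the second claim is handled as in the paper via Lemma \ref{exi_max_sing} combined with the first part.
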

\begin{proof}
By Equation \eqref{eigensys} we have $\sigma_i(\vc x)=\lambda^{p_i'-1} \vc x$ for every $i\in[m]$, where $\lambda=Q(\vc x)$ is the singular value associated to $\vc x$. From Proposition \ref{Characterizer_prop} we know that, with $\ba = \frac{1}{2}\big(1,\lambda^{1-p_1'},\ldots,\lambda^{1-p_m'}\big)$, there is some $n \in \mathbb{N}$ such that $0<T^{n}_{\ba}(\vc x)=\vc x$. Existence of $\vc x^*>0$ follows from Lemma \ref{exi_max_sing} and the discussion above.
\end{proof}
Using a similar argument as Friedland et al. (Theorem 3.3, \cite{Fried}), we show that if there is $i\in [m]$ such that $(m-1)p_i' \leq p_k$ for every $k\in[m]\setminus\{i\}$ and $f\geq 0$ is weakly irreducible, then there exists a strictly positive singular vector associated to the maximal singular value of $f$. First, we recall a Theorem proved by Gaubert and Gunawardena.
\begin{thm}[Theorem 2, \cite{Gaubert}]\label{Gaub}
Let $F:\R^n_{++}\to \R^n_{++}$. For $t >0$ and $J \subset [n]$ denote by $\vc u^J(t) = (u^J_1(t),\ldots,u^J_n(t))\in\R^n_{++}$ the following vector: $u^J_i(t) = 1+t$ if $i \in J$ and $u^J_i(t) = 1$ if $i \notin J$. The di-graph $\mathcal{G}(F)=\big([n], \mathcal{E}(F)\big)$ associated to $F$ is defined as follow: $(k,l) \in \mathcal{E}(F)$ if and only if $\lim_{t\to \infty} F_k\big(\vc u^{\{l\}}(t)\big) = \infty$. If $F(\vc x) \leq F(\vc y)$ for every $0<\vc x\leq\vc y$, $F$ is positively $1$-homogeneous and $\mathcal{G}(F)$ is strongly connected, then there exists $\lambda > 0$ and $\vc x \in \R^n_{++}$ such that $F(\vc x) = \lambda \vc x$.
\end{thm}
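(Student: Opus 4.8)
Although Theorem~\ref{Gaub} is quoted from \cite{Gaubert}, a proof can be sketched as follows. The goal is to produce a fixed point of the normalized map $G(\vc x) := F(\vc x)/\norm{F(\vc x)}_1$ on the open simplex $\Delta_{++} := \{\vc x \in \R^n_{++}\mid \sum_{i\in[n]} x_i = 1\}$, since a fixed point $\vc x^*$ of $G$ satisfies $F(\vc x^*) = \lambda\vc x^*$ with $\lambda = \norm{F(\vc x^*)}_1 > 0$. Because $\Delta_{++}$ is not compact and $F$ need not extend continuously to the closed simplex $\Delta := \{\vc x\in\R^n_+\mid\sum_{i\in[n]}x_i=1\}$, I would first regularize: for $\epsilon > 0$ put $F_\epsilon(\vc x) := F(\vc x + \epsilon\,\mathbf 1)$ with $\mathbf 1 = (1,\ldots,1)\in\R^n$. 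Then $F_\epsilon$ is continuous and strictly positive on the compact convex set $\Delta$, so $G_\epsilon(\vc x) := F_\epsilon(\vc x)/\norm{F_\epsilon(\vc x)}_1$ is a continuous self-map of $\Delta$ and Brouwer's fixed point theorem gives $\vc x_\epsilon \in \Delta_{++}$ with $F(\vc x_\epsilon + \epsilon\,\mathbf 1) = \lambda_\epsilon\,\vc x_\epsilon$ and $\lambda_\epsilon := \norm{F(\vc x_\epsilon + \epsilon\,\mathbf 1)}_1 > 0$. From $\vc x_\epsilon + \epsilon\,\mathbf 1 \leq (1+\epsilon)\,\mathbf 1$ together with order-preservation and $1$-homogeneity one gets the a priori bound $\lambda_\epsilon \leq (1+\epsilon)\norm{F(\mathbf 1)}_1$.

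Next I would let $\epsilon \to 0^+$: along a subsequence $\vc x_\epsilon \to \vc x^* \in \Delta$ and $\lambda_\epsilon \to \lambda^* \in [0,\norm{F(\mathbf 1)}_1]$. If $\vc x^* \in \Delta_{++}$, then $\vc x_\epsilon + \epsilon\,\mathbf 1 \to \vc x^*\in\R^n_{++}$ and continuity of $F$ yields $F(\vc x^*) = \lambda^*\vc x^*$ with $\lambda^* = \norm{F(\vc x^*)}_1 > 0$, which is exactly the claim. So everything reduces to \emph{ruling out a boundary limit} $\vc x^*\in\Delta\setminus\Delta_{++}$, and this is the only place where strong connectivity of $\mathcal{G}(F)$ is needed. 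If $J := \{i\in[n]\mid x^*_i = 0\}$ is nonempty, it is proper because $\sum_i x^*_i = 1$, and strong connectivity furnishes $k\in J$, $l\notin J$ with $(k,l)\in\mathcal{E}(F)$, i.e.\ $F_k\big(\vc u^{\{l\}}(t)\big)\to\infty$ as $t\to\infty$. On the other hand $x_{\epsilon,k}\to x^*_k = 0$ and $\lambda_\epsilon$ is bounded, hence $F_k(\vc x_\epsilon + \epsilon\,\mathbf 1) = \lambda_\epsilon x_{\epsilon,k}\to 0$. The plan is to contradict this via the edge: since $x_{\epsilon,l}\to x^*_l > 0$ while every coordinate of $\vc x_\epsilon + \epsilon\,\mathbf 1$ is at least $\epsilon$, one has $\vc x_\epsilon + \epsilon\,\mathbf 1 \geq \epsilon\,\vc u^{\{l\}}(t_\epsilon)$ with $t_\epsilon := x_{\epsilon,l}/\epsilon\to\infty$, so that monotonicity and homogeneity give $F_k(\vc x_\epsilon + \epsilon\,\mathbf 1) \geq \epsilon\,F_k\big(\vc u^{\{l\}}(t_\epsilon)\big)$.

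The hard part is exactly this last estimate: the prefactor $\epsilon$ competes against the growth of $F_k(\vc u^{\{l\}}(\cdot))$, and when that growth is merely sublinear (which $1$-homogeneity permits) the crude bound only shows $F_k(\vc x_\epsilon + \epsilon\,\mathbf 1)$ stays bounded, not bounded away from $0$, so no contradiction follows. To fix this I would rescale $\vc x_\epsilon + \epsilon\,\mathbf 1$ by the smallest of its coordinates \emph{over $[n]\setminus J$} rather than by the global minimum $\epsilon$, and iterate the edge argument along a directed path of $\mathcal{G}(F)$ from $[n]\setminus J$ into $J$, so that the mass surviving on $[n]\setminus J$ feeds a uniform lower bound on each coordinate in turn. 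A cleaner alternative I would fall back on is the substitution $\vc y = \log\vc x$: then $\log\circ F\circ\exp$ is order-preserving and additively homogeneous, an eigenvector of $F$ in $\R^n_{++}$ becomes an additive eigenvector, $\mathcal{G}(F)$ is its natural graph, and one runs a fixed point argument in $\R^n/\R\mathbf 1$ in which strong connectivity is used to keep the orbit bounded and block escape to infinity --- essentially the argument of \cite{Gaubert}.
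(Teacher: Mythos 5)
This statement is quoted from \cite{Gaubert} and the paper gives no proof of it, so the only question is whether your sketch is itself a complete proof. It is not: it has a genuine gap, and it is exactly the one you flag yourself. The regularization--Brouwer scheme (normalize on the simplex, perturb by $\epsilon\,\mathbf 1$, pass to the limit) is sound as far as it goes, but it only reduces the theorem to excluding a boundary limit $\vc x^*$ with zero set $J\neq\emptyset$, and that exclusion is the entire content of the theorem. Your one-edge estimate $F_k(\vc x_\epsilon+\epsilon\,\mathbf 1)\geq \epsilon\,F_k\big(\vc u^{\{l\}}(x_{\epsilon,l}/\epsilon)\big)$ provably cannot close it: $1$-homogeneity together with monotonicity allows $F_k\big(\vc u^{\{l\}}(t)\big)$ to grow like $t^{\alpha}$ for any $\alpha\in(0,1)$ (take $F_k(\vc x)=x_k^{1-\alpha}x_l^{\alpha}$), in which case $\epsilon\,(x_{\epsilon,l}/\epsilon)^{\alpha}\to 0$ and no contradiction with $F_k(\vc x_\epsilon+\epsilon\,\mathbf 1)=\lambda_\epsilon x_{\epsilon,k}\to 0$ arises. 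The two repairs you offer are not carried out: the ``rescale by the minimum over $[n]\setminus J$ and iterate along a directed path'' plan is only a plan --- the real difficulty is that the coordinates indexed by $J$ may vanish at incomparable rates while the growth along each edge may be arbitrarily slow, so the bookkeeping does not obviously telescope --- and the fallback ``pass to $\log$-coordinates and argue as in \cite{Gaubert}'' simply names the machinery of the cited paper (topical maps, nonexpansiveness in the sup/Hilbert metric, boundedness of the relevant slice sets under strong connectivity) without supplying the argument. As it stands, the sketch establishes existence of approximate eigenvectors $(\lambda_\epsilon,\vc x_\epsilon)$ with $\lambda_\epsilon$ bounded, but not the interior eigenvector claimed by the theorem.

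Two minor points you should also make explicit if you develop this: continuity of $F$ on $\R^n_{++}$ is not an assumption of the statement and must be derived (it follows since monotonicity plus $1$-homogeneity make $F$ nonexpansive in the Thompson part metric on $\R^n_{++}$), and the strict positivity $\vc x_\epsilon\in\Delta_{++}$ should be justified by noting that the Brouwer fixed point equals its own image under $G_\epsilon$, which lands in $\Delta_{++}$ because $F$ maps into $\R^n_{++}$.
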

\begin{lem}\label{homos}
Let $1<p_1,\ldots,p_m<\infty,f\in\R^{d_1\times \ldots \times d_m}$ and $i,k\in [m]$ with $i\neq k$, then
\begin{equation*}
s_{i,k}(\theta_1\vc x_1,\ldots, \theta_m\vc x_m) =\left(\prod_{l\in\msi}\theta_l\right)^{p_i'(p_k'-1)} \theta_k^{1-p_k'} \ s_{i,k}(\vc x) \qquad \forall \theta_1,\ldots,\theta_m \in \R_{++},\vc x \in \sR^{d-d_i}.
\end{equation*}
\end{lem}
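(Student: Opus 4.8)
The plan is to compute directly how each ``layer'' of the nested expression defining $s_{i,k}$ in \eqref{def_s} transforms under the rescaling $\vc x_l \mapsto \theta_l \vc x_l$, using only two elementary facts: the multilinearity of $f$ in each argument, and the identity $\psi_q(\theta \vc y) = \theta^{q-1}\psi_q(\vc y)$ for $\theta>0$, which follows immediately from the definition $(\psi_q(\vc y))_j = |y_j|^{q-1}\sign(y_j)$. Recall that
\begin{equation*}
s_{i,k}(\vc x) = \psi_{p_k'}\!\left(\grad_k f\!\left(\vc x_1,\ldots,\vc x_{i-1},\psi_{p_i'}\!\big(\grad_i f(\vc x)\big),\vc x_{i+1},\ldots,\vc x_m\right)\right).
\end{equation*}

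First I would handle the innermost object $\grad_i f(\vc x)$. Since $\grad_i f$ is obtained by deleting the $i$-th argument and is multilinear in the remaining $m-1$ arguments (indices $l \in \msi$), replacing $\vc x_l$ by $\theta_l \vc x_l$ for $l \in \msi$ multiplies $\grad_i f(\vc x)$ by $\prod_{l\in\msi}\theta_l$; note $\grad_i f$ does not depend on the $i$-th argument, so $\theta_i$ is irrelevant here. Applying $\psi_{p_i'}$ then contributes an exponent $p_i'-1$, so the vector $\psi_{p_i'}\big(\grad_i f(\vc x)\big)$ gets multiplied by $\big(\prod_{l\in\msi}\theta_l\big)^{p_i'-1}$.

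Next I would track this through $\grad_k f$. The argument of $\grad_k f$ has the $i$-th slot occupied by $\psi_{p_i'}\big(\grad_i f(\cdot)\big)$ — scaled by $\big(\prod_{l\in\msi}\theta_l\big)^{p_i'-1}$ as just computed — and the slots $l \in \msi\setminus\{k\}$ occupied by $\theta_l \vc x_l$; the $k$-th slot is deleted. By multilinearity of $\grad_k f$ in all these $m-1$ arguments, $\grad_k f(\cdots)$ is multiplied by
\begin{equation*}
\Big(\prod_{l\in\msi}\theta_l\Big)^{p_i'-1}\cdot\!\!\prod_{l\in\msi\setminus\{k\}}\!\!\theta_l
= \Big(\prod_{l\in\msi}\theta_l\Big)^{p_i'}\theta_k^{-1}.
\end{equation*}
Finally, applying the outer $\psi_{p_k'}$ raises this factor to the power $p_k'-1$, giving the overall multiplier $\big(\prod_{l\in\msi}\theta_l\big)^{p_i'(p_k'-1)}\theta_k^{1-p_k'}$, which is exactly the claimed identity. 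There is no real obstacle here; the only point requiring slight care is the bookkeeping of which $\theta_l$ appear at each stage — in particular that $\theta_i$ never enters (both $\grad_i f$ and $\grad_k f$ are evaluated with no dependence on the $i$-th slot after the relevant substitutions) and that $\theta_k$ enters with a negative net power because the $k$-th slot is absent in $\grad_k f$ but present inside $\grad_i f$. I would present the computation as a short displayed chain of equalities substituting $\theta_l \vc x_l$ into \eqref{def_s} and pulling the scalars out step by step.
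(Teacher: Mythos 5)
Your computation is correct and matches the intended argument: the paper dispatches this lemma with ``follows from a straightforward computation,'' and your step-by-step bookkeeping (multilinearity of $\grad_i f$ and $\grad_k f$ in the non-deleted slots, plus the degree-$(q-1)$ positive homogeneity of $\psi_q$) is exactly that computation carried out explicitly, yielding the multiplier $\big(\prod_{l\in\msi}\theta_l\big)^{p_i'(p_k'-1)}\theta_k^{1-p_k'}$ with the correct treatment of $\theta_i$ (absent) and $\theta_k$ (net negative power).
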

\begin{proof}
Follows from a straightforward computation.
\end{proof}
\begin{thm}\label{weak_strictpos}
Let $1<p_1,\ldots,p_m<\infty$ and $f\geq 0$ be a weakly irreducible tensor. Suppose that there is $i \in [m]$ such that $(m-1)p_i' \leq p_k$ for every $k\in\msi$, then $f$ has a strictly positive singular vector $\vc x^* >0$ so that $Q(\vc x^*)=\norm{f}_{p_1,\ldots,p_m}$.
\end{thm}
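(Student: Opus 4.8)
The plan is to use the bijection $\Phi_i$ of Proposition \ref{dual_egual} to reduce the statement to producing a \emph{strictly positive} critical point of $Q_i$, and to obtain such a point from the Gaubert--Gunawardena theorem (Theorem \ref{Gaub}) applied to a suitable renormalization of the map $\vec s\coloneqq(s_{i,k})_{k\in\msi}\colon\sR^{d-d_i}_{++}\to\sR^{d-d_i}_{++}$. This map is well defined by Corollary \ref{dual_s_pos} (here weak irreducibility is used to keep it strictly positive-valued) and order-preserving by Lemma \ref{order_preserving}.

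The decisive computation is the homogeneity of $\vec s$: by Lemma \ref{homos}, under the uniform scaling $\vc z\mapsto t\vc z$ the block $s_{i,k}$ is positively homogeneous of degree $\delta_k\coloneqq(p_k'-1)\big((m-1)p_i'-1\big)$, and since $(p_k'-1)(p_k-1)=1$ the hypothesis $(m-1)p_i'\le p_k$ for $k\in\msi$ is \emph{exactly} the assertion that $\delta_k\le 1$ for every $k$. This is where the assumption enters, and it is essential: $\vec s$ is not $1$-homogeneous, but because $\delta_k\le 1$ one may raise the degree of each block by a \emph{non-negative scalar power}, which preserves both order-preservation and the proportionality relation $s_{i,k}(\vc z)\parallel\vc z_k$ on which the return to a singular vector depends. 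Concretely I would fix a $1$-homogeneous, order-preserving, strictly positive scalar $\Lambda\colon\sR^{d-d_i}_{++}\to\R_{++}$ (say the smallest entry of $\vc z$) and set $\hat s_k(\vc z)\coloneqq\Lambda(\vc z)^{1-\delta_k}\,s_{i,k}(\vc z)$; then $\hat s\coloneqq(\hat s_k)_{k\in\msi}$ maps $\sR^{d-d_i}_{++}$ into itself, is order-preserving (a product of non-negative order-preserving maps, using $1-\delta_k\ge0$), and is positively $1$-homogeneous. The remaining hypothesis of Theorem \ref{Gaub}, strong connectedness of $\mathcal G(\hat s)$, I would deduce from connectedness of $G(f)$ (Definition \ref{def_irr}): driving one coordinate of $\vc z$ to $+\infty$ forces, since $\grad f$ is multilinear and the outer $\psi$'s are monotone, some entry of $s_{i,k}(\vc z)$ and hence of $\hat s_k(\vc z)$ to diverge, along exactly the edges prescribed by the support of $f$ after the $i$-th block is contracted through $\grad_i f$; Proposition \ref{Characterizer_prop}, \ref{magiclema}) guarantees no coordinate stays isolated.

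Theorem \ref{Gaub} then gives $\vc z^\circ\in\sR^{d-d_i}_{++}$ and $\mu>0$ with $\hat s(\vc z^\circ)=\mu\vc z^\circ$, i.e. $s_{i,k}(\vc z^\circ)=c_k\vc z^\circ_k$ with $c_k=\mu\,\Lambda(\vc z^\circ)^{\delta_k-1}>0$. Letting $\hat{\vc z}\in\NS^{d-d_i}_{++}$ be the block-wise normalization of $\vc z^\circ$, Lemma \ref{homos} still gives $s_{i,k}(\hat{\vc z})=\tilde c_k\hat{\vc z}_k$ for some $\tilde c_k>0$. To see that the $\tilde c_k$ are mutually compatible I apply $\psi_{p_k}$ (so that $\psi_{p_k}\circ\psi_{p_k'}=\Id$ strips the outer $\psi_{p_k'}$ in $s_{i,k}$) and pair with $\hat{\vc z}_k$: the identity $f(\vc x)=\ps{\grad_k f(\vc x)}{\vc x_k}$ yields $\tilde c_k^{\,p_k-1}=\ps{\psi_{p_k}(s_{i,k}(\hat{\vc z}))}{\hat{\vc z}_k}=f\big(\hat{\vc z}_1,\ldots,\psi_{p_i'}(\grad_i f(\hat{\vc z})),\ldots,\hat{\vc z}_m\big)$, which is independent of $k$; hence $\tilde c_k=\lambda^{\,p_i'(p_k'-1)}$ with $\lambda\coloneqq f\big(\hat{\vc z}_1,\ldots,\psi_{p_i'}(\grad_i f(\hat{\vc z})),\ldots\big)^{1/p_i'}>0$. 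Thus $(\lambda,\hat{\vc z})\in C_i^*$, so by Proposition \ref{dual_egual} $\Phi_i(\lambda,\hat{\vc z})\in C^*$ is a singular vector of $f$, and it is strictly positive by Corollary \ref{dual_pass}, \ref{phi_neg2}), since $\hat{\vc z}>0$ and $f$ is weakly irreducible.

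It remains to identify $\lambda$ with $\norm{f}_{p_1,\ldots,p_m}$. Since by Proposition \ref{dual_egual} the maximum of $Q_i$ is $\norm{f}_{p_1,\ldots,p_m}$ and $\lambda=Q_i(\hat{\vc z})$, the bound $\lambda\le\norm{f}_{p_1,\ldots,p_m}$ is immediate; for the converse I would argue, as in Theorem 3.3 of \cite{Fried}, that a strictly positive critical point of $Q_i$ must be a maximizer — taking the non-negative maximizer $\tilde{\vc x}^*$ of $Q$ from Lemma \ref{exi_max_sing}, passing to $\vc y^*=\Phi_i^{-1}\big(\norm{f}_{p_1,\ldots,p_m},\tilde{\vc x}^*\big)\ge 0$, and comparing $\hat{\vc z}$ with $\vc y^*$ through the order-preserving $\hat s$ in Collatz--Wielandt fashion (equivalently, reading $\lambda$ off the Collatz--Wielandt value of the Perron eigenvalue $\mu$ of $\hat s$). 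I expect the main obstacle to be precisely the construction of the second paragraph: the block homogeneity degrees of $\vec s$ genuinely differ and exceed $1$ in general, and the naive remedy of composing with coordinatewise power maps $\psi_q$ would destroy the rank-one relation $s_{i,k}(\vc z)\parallel\vc z_k$ that is needed to recover a singular vector, so it is only the hypothesis $(m-1)p_i'\le p_k$ that makes the harmless scalar correction $\Lambda^{1-\delta_k}$ available; checking strong connectedness of $\mathcal G(\hat s)$ and the maximality of the resulting eigenvector are the remaining, more routine, technical steps.
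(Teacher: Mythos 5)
Your construction of the strictly positive critical point of $Q_i$ is sound and runs structurally parallel to the paper: the paper also reduces via $\Phi_i$, builds from $(s_{i,k})_{k\in\msi}$ an order-preserving, positively $1$-homogeneous self-map of the cone (its map $A_i$, with entrywise factors $x_{k,j_k}^{p_{\max}-p_k}$, blockwise factors $\norm{\vc x_k}_{p_k}^{p_k-p_i'(m-1)}$ and an overall exponent $1/(p_{\max}-1)$), and invokes Theorem \ref{Gaub}. Your lighter homogenization $\hat s_k=\Lambda^{1-\delta_k}s_{i,k}$ works for this part: the degree computation $\delta_k=(p_k'-1)\big((m-1)p_i'-1\big)$ and the equivalence $\delta_k\le 1\iff (m-1)p_i'\le p_k$ are correct, order-preservation follows from $1-\delta_k\ge 0$, the digraph $\mathcal G(\hat s)$ coincides with that of $(s_{i,k})_k$ on the test vectors $\vc u^{\{l\}}(t)$ (where $\Lambda\equiv 1$), so the connectivity argument is literally the paper's one for $A_i$, and your pairing argument $\tilde c_k^{\,p_k-1}=\ps{\psi_{p_k}\big(s_{i,k}(\hat{\vc z})\big)}{\hat{\vc z}_k}=\norm{\grad_if(\hat{\vc z})}_{p_i'}^{p_i'}$ correctly identifies $\hat{\vc z}$ as a critical point of $Q_i$ with value $\lambda=Q_i(\hat{\vc z})$. (As an aside, your remark that coordinatewise power maps would destroy the proportionality relation is off the mark: the paper's $A_i$ uses exactly such powers and recovers critical points in its steps iii)--iv).)

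The genuine gap is the maximality step $\lambda\ge\norm{f}_{p_1,\ldots,p_m}$, which is half of the theorem. You propose to compare $\hat{\vc z}$ with $\vc y^*=\Phi_i^{-1}\big(\norm{f}_{p_1,\ldots,p_m},\tilde{\vc x}^*\big)\ge 0$ ``through the order-preserving $\hat s$'', i.e.\ by eigenvalue domination for $\hat s$. But $\vc y^*$ is in general \emph{not} an eigenvector of $\hat s$: from $s_{i,k}(\vc y^*)=\norm{f}_{p_1,\ldots,p_m}^{p_i'(p_k'-1)}\vc y^*_k$ one gets $\hat s_k(\vc y^*)=\Lambda(\vc y^*)^{1-\delta_k}\norm{f}_{p_1,\ldots,p_m}^{p_i'(p_k'-1)}\vc y^*_k$, whose constant depends on $k$ unless all $p_k$ coincide; worse, since only weak irreducibility is assumed, the maximizer from Lemma \ref{exi_max_sing} is a priori only nonnegative, and if it has a zero entry then $\Lambda(\vc y^*)=0$, so $\hat s(\vc y^*)=0$ and the domination lemma yields only the vacuous bound $0\le\mu$. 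This is exactly the difficulty the paper's $A_i$ is engineered to remove: its step iv) shows that \emph{every} critical point of $Q_i$ in $\NS^{d-d_i}_+$, boundary points included, is an eigenvector of $A_i$ with eigenvalue $\lambda^{p_i'/(p_{\max}-1)}$, a fixed monotone function of the critical value, after which the Nussbaum comparison of step ii) closes the argument. Your gap can be repaired without $\hat s$ by a direct one-step Collatz--Wielandt comparison: take the largest $\theta\in(0,1]$ with $\theta\vc y^*\le\hat{\vc z}$ (note $\theta\le 1$ since both lie in $\NS^{d-d_i}$), apply Lemma \ref{order_preserving} and Lemma \ref{homos} at an index $(k,j_k)$ where $\theta y^*_{k,j_k}=\hat z_{k,j_k}>0$ to obtain $\theta^{\delta_k-1}\norm{f}_{p_1,\ldots,p_m}^{p_i'(p_k'-1)}\le\lambda^{p_i'(p_k'-1)}$, and use $\delta_k\le 1$, $\theta\le 1$ to conclude; but as written your proposal contains no working argument for this half of the statement.
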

\begin{proof}
Let $A_i\colon\sR^{d-d_i}_{+}\to\sR^{d-d_i}_{+}$ be defined by $A_i(\vc x) \coloneqq \big(A_{i,1}(\vc x),\ldots,$ $A_{i,i-1}(\vc x),A_{i,i+1}(\vc x),\ldots $, $A_{i,m}(\vc x)\big)$ with
\begin{equation*}
A_{i,k,j_k}(\vc x) \coloneqq \bigl x_{k,j_k}^{p_{\max}-p_k}\norm{\vc x_k}_{p_k}^{p_k-p_i'(m-1)} \psi_{p_k}\big(s_{i,k,j_k} (\vc x)\big)\bigr^{\frac{1}{p_{\max}-1}} \qquad \forall k \in [m]\setminus\{i\} \text{ and } j_k \in [d_k],
\end{equation*} 
where $p_{\max} \coloneqq\max_{k \in [m] \setminus \{i\}} p_k$. We say that a vector $\vc x\in\sR^{d-d_i}$ is an eigenvector of $A_i$ if $\vc x \neq 0$ and there exists $\lambda\in\R$ such that $A_i(\vc x) = \lambda \vc x$. Suppose that we can prove the following.
\begin{enumerate}[i)]
\item $A_i|_{\sR^{d-d_i}_{++}}$ satisfies all assumptions of Theorem \ref{Gaub}.\label{step1}
\item If $\vc x^*\in\sR^{d-d_i}_{++}$ and $\vc y \in\sR^{d-d_i}_{+}\setminus\{0\}$ are such that $A_i(\vc x^*)= \lambda\vc x^*$ and $A_i(\vc y)= \mu\vc y$ then $\mu\leq\lambda$.
\label{step2}
\item If $\vc x\in \sR^{d-d_i}_{++}$ is an eigenvector of $A_i$ associated to a strictly positive eigenvalue $\lambda$, then it is a critical point of $Q_i$ associated to the critical value $\lambda^{(p_{\max}-1)/p_i'}$.\label{step3}
\item If $\vc x \in \NS^{d-d_i}_+$ is a critical point of $Q_i$ associated to $\lambda$, then $A_i(\vc x)=\lambda^{p_i'/(p_{\max}-1)}\vc x$.\label{step4}
\end{enumerate}
Using \ref{step1}), we may apply Theorem \ref{Gaub} and get the existence of $\vc x^* >0$ such that $A_i(\vc x^*) =\lambda \vc x^*$. \ref{step2}) ensures that the associated eigenvalue $\lambda$ is maximal. By \ref{step3}), we know that $\vc x^*>0$ is a critical point of $Q_i$. Now, suppose by contradiction that $\lambda^{(p_{\max}-1)/p_i'}=Q_i(\vc x^*)<\norm{f}_{p_1,\ldots,p_m}$. Lemma \ref{exi_max_sing} and Proposition \ref{dual_egual} imply the existence of $\vc y^*\geq 0$ such that $Q_i(\vc y^*)=\norm{f}_{p_1,\ldots,p_m}$. By \ref{step4}), we know that $\vc y^*$ is an eigenvector of $A_i$ associated to $\norm{f}_{p_1,\ldots,p_m}^{p_i'/(p_{\max}-1)}>\lambda$, a contradiction. Finally, since $f$ is weakly irreducible we know from Corollary \ref{dual_pass} that $(\norm{f}_{p_1,\ldots,p_m},\tilde{\vc x}^*) \coloneqq \Phi_i\big(\norm{f}_{p_1,\ldots,p_m},\vc x^*\big)$ is such that $\tilde{\vc x}^*\in\sR^d$ is a strictly positive singular vector of $f$ with $Q(\tilde{\vc x}^*)=\norm{f}_{p_1,\ldots,p_m}$. Now, we prove $\ref{step1})-\ref{step4})$.
\begin{enumerate}[i)]
\item The fact that $A_i$ is positively $1$-homogeneous follows from Lemma \ref{homos}. If $0 \leq \vc x \leq \vc y$, then $A_i(\vc x) \leq A_i(\vc y)$ follows from $p_{\max}-p_k\geq 0, p_k-p_i'(m-1) \geq 0$ and Lemma \ref{order_preserving}. In order to prove the remaining property, that $\mathcal{G}(A_i)$ is strongly connected, we use the fact that $f$ is weakly irreducible. So, let $\mathcal{G}(A_i)=\big(\mathcal{V},\mathcal{E}(A_i))$ and $G(f)=\big(V,E(f)\big)$ (see Definition \ref{def_irr} and Theorem \ref{Gaub}) with $\mathcal{V}\coloneqq (\{1\}\times[d_1])\cup\ldots\cup(\{i-1\}\times[d_{i-1}])\cup(\{i+1\}\times[d_{i+1}])\cup\ldots\cup(\{m\}\times[d_m])$ and $V\coloneqq (\{1\}\times[d_1])\cup\ldots\cup(\{m\}\times[d_m])$. 
We show that for every $k,l\in\msi$, if $\big((k,j_k),(l,j_l)\big) \in E(f)$ then $\big((k,j_k),(l,j_l)\big)\in \mathcal{E}(A_i)$ and if $\big((k,j_k),(i,j_i)\big),\big((i,j_i),(l,\nu_l)\big)\in E(f),$ then $\big((k,j_k),(l,\nu_l)\big) \in \mathcal{E}(A_i)$. 
Since $G(f)$ is an undirected connected graph ($f$ is weakly irreducible) this would imply that $\mathcal{G}(A_i)$ is strongly connected. By definition, $\big((k,j_k),(l,j_l)\big)\in E(f)$ implies the existence of $j_s \in [d_s]$ for $s\in [m]\setminus\{k,l\}$ such that $f_{j_1,\ldots,j_m} >0$ and if $\vc u^{\{(l,j_l)\}}(t)> 0$ is defined as in Theorem \ref{Gaub}, then $\partial_{k,j_k} f\big(\vc u^{\{(l,j_l)\}}(t)\big) \to \infty $ as $t \to \infty$. It follows by Proposition \ref{Characterizer_prop}, \ref{magiclema}) and the definition of $s_{i,k}$ that $A_{i,k,j_k}\big(\vc u^{\{(l,j_l)\}}(t)\big) \to \infty$ as $t \to \infty$. 
Now, suppose that $\big((k,j_k),(i,j_i)\big),\big((i,j_i),(l,\nu_l)\big)\in E(f)$, then there exists $j_s \in [d_s]$ and $\nu_t \in [d_t]$ for $s \in [m]\setminus\{k,i\}$ and $t \in [m]\setminus\{l,i\}$ such that $f_{j_1,\ldots,j_m} >0$ and $f_{\nu_1,\ldots,\nu_{i-1},j_i,\nu_{i+1},\ldots,\nu_m} >0$. Again, let $\vc u^{\{(l,\nu_l)\}}(t)> 0$ be defined as in Theorem \ref{Gaub}, then $f_{\nu_1,\ldots,\nu_{i-1},j_i,\nu_{i+1},\ldots,\nu_m} >0$ implies that $\partial_{i,j_i} f\big(\vc u^{\{(l,\nu_l)\}}(t)\big)\to \infty$ as $t \to \infty$ and $f_{j_1,\ldots,j_m} >0$ implies that 
\begin{equation*}
\partial_{k,j_k}f\bigg(\vc u^{\{(l,\nu_l)\}}_1(t), \ldots, \vc u^{\{(l,\nu_l)\}}_{i-1}(t),\psi_{p_i'}\bigl\grad_{i} f\big(\vc u^{\{(l,\nu_l)\}}(t)\big)\bigr, \vc u^{\{(l,\nu_l)\}}_{i+1}(t),\ldots,\vc u^{\{(l,\nu_l)\}}_m(t)\bigg) \to \infty,
\end{equation*}
as $t \to \infty$. It follows that $A_{i,k,j_k}\big(\vc u^{\{(l,\nu_l)\}}(t)\big) \to \infty $ as $t \to \infty$, i.e. $\big((k,j_k),(l,\nu_l)\big)\in \mathcal{E}(A_i)$. Thus $A_i$ fulfills all assumptions of Theorem \ref{Gaub}.
\item This result follows from Lemma 3.3 in \cite{Nussbaum}. We reproduce the proof here for convenience of the reader. Let $\vc x^*\in\sR^{d-d_i}_{++}$ and $\vc y \in\sR^{d-d_i}_{+}\setminus\{0\}$ be such that $A_i(\vc x^*)=\lambda\vc x^*$ and $A_i(\vc y)= \mu\vc y$. Set $\theta\coloneqq\min\left\{\frac{x^*_{k,j_k}}{y_{k,j_k}}\mid y_{k,j_k}>0, k \in [m]\setminus\{i\},j_k \in [d_k]\right\}$, then $\theta>0$ and $\theta\vc y\leq \vc x^*$. We observed in \ref{step1}) that $A_i$ is positively $1$-homogeneous and $0 \leq\vc x\leq\vc y$ imply $A_i(\vc x)\leq A_i(\vc y)$. It follows that for every $n\in\N$, we have $\theta \mu^n \vc y = \theta A^n_i(\vc y) = A_i^n(\theta \vc y) \leq A^n_i(\vc x^*) =\lambda^n \vc x,$
where $A^k_i(\vc z) \coloneqq A^{k-1}_i\big(A_i(\vc z)\big)$ for $k\in\N, k>1$ and $\vc z\geq 0$. This shows that if $\frac{\lambda}{\mu}<1$, then $\theta \vc y \leq \lim_{n\to\infty} \left(\frac{\lambda}{\mu}\right)^n\vc x=0$, a contradiction to $\vc y\in\sR^{d-d_i}_{+}\setminus\{0\}$.
\item Let $\vc x> 0$ be an eigenvector of $A_i$ associated to the eigenvalue $\lambda>0$. Note that $A_i (\vc x) = \lambda \vc x$ implies
\begin{equation}\label{refmilieu}
 \lambda^{p_{\max}-1} x_{k,j_k}^{p_k-1}= \norm{\vc x_k}_{p_k}^{p_k-p_i'(m-1)} \psi_{p_k}\big(s_{i,k,j_k}(\vc x)\big) \qquad \forall k \in [m]\setminus\{i\}, j_k \in [d_k].
\end{equation}
Multiplying this equation by $x_{k,j_k}$ and summing over $j_k\in [d_k]$ shows 
\begin{equation*}
\lambda^{p_{\max}-1} \norm{\vc x_{k}}_{p_k}^{p_k} = \norm{\vc x_k}_{p_k}^{p_k-p_i'(m-1)} \ps{\psi_{p_k}\big(s_{i,k}(\vc x)\big) }{\vc x_k} =\norm{\vc x_k}_{p_k}^{p_k-p_i'(m-1)} \norm{\grad_i f(\vc x)}_{p_i'}^{p_i'}\quad \forall k \in\msi,
\end{equation*}
where we used $\ps{\psi_{p_k}\big(s_{i,k}(\vc x)\big) }{\vc x_k}=f\big(\vc x_1,\ldots,\vc x_{i-1},\sigma_i(\vc x),\vc x_{i+1},\ldots,\vc x_m\big)=\ps{\psi_{p_i'}\big(\grad_if(\vc x)\big)}{\grad_if(\vc x)}$.
 Thus $\norm{\vc x_k}_{p_k}^{p_i'(m-1)} = \norm{\grad_i f(\vc x)}_{p_i'}^{p_i'}\lambda^{1-p_{\max}}$ for every $k\in\msi$, i.e. $\norm{\vc x_1}_{p_1}=\ldots=\norm{\vc x_m}_{p_m}\eqqcolon\alpha$ and we have $\alpha^{-1} \vc x \in \NS^{d-d_i}$. Now, substituting $\vc x$ by $\tilde{\vc x}= \alpha^{-1}\vc x$ in Equation \eqref{refmilieu} and composing by $\psi_{p_k'}$ shows $\lambda^{(p_{\max}-1)(p_k'-1)}\tilde{\vc x}_{k,j_k}=s_{i,k,j_k}(\tilde{\vc x})$. By Equation \eqref{dual_eigensys} it follows that $\tilde{\vc x}$ is a critical point of $Q_i$. Since the critical points of $Q_i$ are scale invariant, $\vc x=\alpha\tilde{\vc x}$ is also a critical point of $Q_i$ and $Q_i(\vc x)= \lambda^{(p_{\max}-1)/p_i'}$.
 \item Suppose that $\vc x \in \NS^{d-d_i}_+$ is a critical point of $Q_i$ and let $\lambda>0$ its associated critical value. Since $\vc x\in \NS^{d-d_i}$ is a singular vector of $f$, by Equation \eqref{dual_eigensys} we have $s_{i,k}(\vc x) = \lambda^{p_i'(p_k'-1)} \vc x_k$  for every $k\in\msi$. It follows that $
 A_{i,k,j_k}(\vc x) =  \bigl x_{k,j_k}^{p_{\max}-p_k}\psi_{p_k}\big( \lambda^{p_i'(p'_k-1)} x_{k,j_k}\big)\bigr^{1/(p_{\max}-1)} = \lambda^{p_i'/(p_{\max}-1)} x_{k,j_k},$
 i.e. $\vc x$ is an eigenvector of $A_i$ associated to the eigenvalue $\lambda^{p_i'/(p_{\max}-1)}>0$.\qedhere
\end{enumerate}
\end{proof}
\section{Collatz-Wielandt analysis and proof of the main Theorem}\label{thethm}
For $i \in [m]$, consider $\mbd_i,\Mbd_i \colon \NS^{d-d_i}_{++}\to \R_{++}$, the Collatz-Wielandt ratios defined as follow:
\begin{equation}\label{gammadef}
\mbd_i(\vc x) \coloneqq \prod_{k\in\msi}\min_{j_k \in [d_k]}\left(\frac{s_{i,k,j_k}(\vc x)}{x_{k,j_k}}\right)^{p_k-1}\ \text{ and }\  \Mbd_i(\vc x)\coloneqq\prod_{k\in\msi}\max_{j_k \in [d_k]}\left(\frac{s_{i,k,j_k}(\vc x)}{x_{k,j_k}}\right)^{p_k-1}.
\end{equation}
The next lemma is useful to prove the uniqueness of the strictly positive singular vector of $f$ in our Perron-Frobenius Theorem.
\begin{lem}\label{uniweak}
Let $f\geq 0$ be a weakly irreducible tensor, $i \in [m]$ and $\vc x,\vc y \in \sR^{d-d_i}$. Assume $0< \vc x \leq \vc y$ and $\vc x \neq \vc y$, furthermore consider the following sets for $\nu \in [m]\setminus \{i\}$: $
J_{\nu} \coloneqq \{j_\nu \in [d_\nu]\mid x_{\nu,j_\nu} = y_{\nu,j_\nu}\},   J_{i}\coloneqq \{j_i \in [d_i]\mid\sigma_{i,j_i}(\vc x)=\sigma_{i,j_i}(\vc y)\},  I_\nu \coloneqq \{j_\nu\in [d_\nu]\mid s_{i,\nu,j_\nu}(\vc x)=s_{i,\nu,j_\nu}(\vc y)\},  
I_i \coloneqq J_i,   J \coloneqq \big(\{1\}\times J_1\big) \cup \ldots \cup \big(\{m\}\times J_m\big)$ and $I \coloneqq \big(\{1\}\times I_1\big) \cup \ldots \cup \big(\{m\}\times I_m\big)$.
Then $I \subset J$ and if $J \neq \emptyset$ we have $I \neq J$.
\end{lem}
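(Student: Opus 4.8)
The plan is to translate both assertions into statements about two explicit vectors of $\sR^d$, and then read them off from monotonicity of the partial derivatives of $f$ together with connectedness of $G(f)$. \textbf{Step 1 (change of variables).} I would set $\vc u,\vc v\in\sR^d$ by inserting $\psi_{p_i'}(\grad_i f(\vc x))$, resp.\ $\psi_{p_i'}(\grad_i f(\vc y))$, into the $i$-th slot of $\vc x$, resp.\ of $\vc y$. Since $0<\vc x\leq\vc y$, Lemma~\ref{order_preserving} and monotonicity of $\psi_{p_i'}$ give $\vc u\leq\vc v$, and Proposition~\ref{Characterizer_prop}, \ref{magiclema}) applied to $\vc x>0$ gives $\grad_i f(\vc x)>0$, hence $\vc u,\vc v>0$. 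By \eqref{def_s} one has $s_{i,k}(\vc x)=\psi_{p_k'}(\grad_k f(\vc u))$ and, since $\partial_{i,j_i}f$ does not depend on the $i$-th slot, $\partial_{i,j_i}f(\vc x)=\partial_{i,j_i}f(\vc u)$; injectivity of the maps $\psi_q$, together with $\sigma_{i,j_i}(\vc x)=\psi_{p_i'}(\partial_{i,j_i}f(\vc x))=u_{i,j_i}$ on the $i$-th block, then yields the clean descriptions
\begin{equation*}
J=\{(k,j_k)\in V\mid u_{k,j_k}=v_{k,j_k}\},\qquad I=\{(k,j_k)\in V\mid \partial_{k,j_k}f(\vc u)=\partial_{k,j_k}f(\vc v)\},
\end{equation*}
where $V$ denotes the vertex set of $G(f)$.

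\textbf{Step 2 (saturation).} For $f\geq 0$ and $0<\vc w\leq\vc w'$, the number $\partial_{k,j_k}f(\vc w)$ is a nonnegative linear combination of the strictly positive monomials $\prod_{l\neq k}w_{l,j_l}$, so $\partial_{k,j_k}f(\vc w)\leq\partial_{k,j_k}f(\vc w')$, with equality if and only if every multi-index $(j_1,\ldots,j_m)$ with $f_{j_1,\ldots,j_m}>0$ and $k$-th index $j_k$ satisfies $(l,j_l)\in J$ for all $l\in[m]\setminus\{k\}$ (applied with $\vc w=\vc u$, $\vc w'=\vc v$). Call a vertex $(k,j_k)$ with this last property \emph{saturated}; then by Step 1, $I$ is exactly the set of saturated vertices. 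Moreover weak irreducibility makes $G(f)$ connected, so every vertex has degree at least one: for each $(k,j_k)\in V$ there is at least one positive entry of $f$ with $k$-th index $j_k$.

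\textbf{Step 3 ($I\subset J$).} On the $i$-th block this is immediate from $I_i=J_i$. Let $(\nu,j_\nu)$ be saturated with $\nu\neq i$, and pick by Step 2 a positive entry $f_{j_1,\ldots,j_m}>0$ with $\nu$-th index $j_\nu$. Saturation of $(\nu,j_\nu)$ forces $(i,j_i)\in J=J_i=I_i$, hence $(i,j_i)$ is itself saturated, and saturation of $(i,j_i)$ applied to the \emph{same} multi-index forces $(l,j_l)\in J$ for every $l\neq i$; in particular $(\nu,j_\nu)\in J$. The hard part here, I think, is to notice that this detour through an $i$-th-block vertex is necessary: $s_{i,\nu,j_\nu}$ has no direct dependence on $x_{\nu,j_\nu}$, so the implication cannot be obtained directly and must be routed through a common positive entry of $f$ touching the $i$-th block.

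\textbf{Step 4 ($I\neq J$ when $J\neq\emptyset$).} Suppose $I=J$; combined with Step 3 this says every vertex of $J$ is saturated. Since $\vc x\neq\vc y$ we have $V\setminus J\neq\emptyset$, and $J\neq\emptyset$ by hypothesis, so connectedness of $G(f)$ produces an edge joining some $(k,a)\in J$ to some $(l,b)\in V\setminus J$; by definition of $E(f)$ this edge arises from a positive entry $f_{j_1,\ldots,j_m}>0$ whose $k$-th index is $a$ and whose $l$-th index is $b$. But $(k,a)\in J$ is saturated, so that entry forces $(l,b)\in J$, contradicting $(l,b)\in V\setminus J$. Hence $I\neq J$, which completes the proof.
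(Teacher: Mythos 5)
Your proof is correct and follows essentially the same route as the paper's: equality of the monotone sums $\partial_{k,j_k}f$ forces factor-wise equality on every positive entry, the inclusion $I\subset J$ is obtained by routing through the $i$-th block via a common positive entry (every vertex has degree at least one by weak irreducibility), and the second claim comes from a cut edge between $J$ and $V\setminus J$ supplied by connectedness of $G(f)$; your change of variables to $\vc u,\vc v$ and the ``saturation'' language are just a repackaging of these steps. Your Step 4 is a slight streamlining: the paper first treats the cases $J_k=[d_k]$ and $J_k=\emptyset$ before running the same cut-edge argument, whereas you observe that $\emptyset\neq J\subsetneq V$ (the latter from $\vc x\neq\vc y$) already suffices.
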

\begin{proof}
First, we prove $I \subset J$. If $I = \emptyset$, there is nothing to prove, so suppose $I \neq \emptyset$. Let $(l,j_l) \in I$ with $l\in\msi$, weak irreducibility of $f$ implies the existence of $j_k\in[d_k]$ for each $k\in[m]\setminus\{l\}$ such that $f_{j_1,\ldots,j_m}>0$. So, $s_{i,l,j_l}(\vc x)=s_{i,l,j_l}(\vc y)$ implies $\sigma_{i,j_i}(\vc x)=\sigma_{i,j_i}(\vc y)$ because $0<x_{k,j_k}\leq y_{k,j_k}$ for every $k\in[m]\setminus\{l,i\}$. Now, $\sigma_{i,j_i}(\vc x)=\sigma_{i,j_i}(\vc y)$ implies $x_{l,j_l}=y_{l,j_l}$ since $0<x_{k,j_k}\leq y_{k,j_k}$ for every $k\in[m]\setminus\{l,i\}$. Thus $(l,j_l)\in J$ and we have $I\subset J$.
Now, suppose $J \neq \emptyset$, then $\vc x\neq \vc y$ implies the existence of $l\in[m]\setminus\{i\}$ and $j_l\in[d_l]$ such that $x_{l,j_l}<y_{l,j_l}$. Since $f$ is weakly irreducible, there exists $j_k\in[d_k]$ for each $k\in[m]\setminus\{l\}$ such that $f_{j_1,\ldots,j_m}>0$. It follows that $\sigma_{i,j_i}(\vc x)< \sigma_{i,j_i}(\vc y)$ and thus $I_i\neq [d_i]$ as well as $s_{i,k,j_k}(\vc x)<s_{i,k,j_k}(\vc y)$ for $k\in[m]\setminus\{i\}$. Hence, $I_k\neq [d_k]$ for every $k\in [m]$. Now, on one hand, if there is $k \in [m]$ such that $J_k=[d_k]$, then $I_k \neq J_k$ and the proof is done. On the other hand, if there is $k \in [m]$ such that $J_k=\emptyset$, then $I_l=\emptyset$ for every $l\in\msi$ and the proof is done. Finally, assume that $J_k\notin\big\{[d_k],\emptyset\big\}$ for every $k\in [m]$.
Suppose by contradiction that $I=J$. Weak irreducibility of $f$ implies the existence of a vertex between $J$ and $V\setminus J$ in the graph $G(f)=\big(V,E(f)\big)$, i.e. there exists $\nu,\mu \in [m],\nu\neq\mu, j^*_\nu\in J_\nu, j^*_\mu\in [d_\mu]\setminus J_\mu$ and $j^*_k\in [d_k]$ for each $k \in [m]\setminus\{\nu,\mu\}$ such that $f_{j^*_1,\ldots,j^*_m}>0$. 
If $\nu \neq i$, $s_{i,\nu,j^*_\nu} (\vc y)=s_{i,\nu,j^*_\nu}(\vc x)$ follows from $\nu \in J_\nu = I_\nu$ and thus $y_{\mu,j^*_\mu} = x_{\mu,j^*_\mu}$, a contradiction to $j^*_\mu\in [d_\mu]\setminus J_\mu$. If $\nu = i$, the equality $\sigma_{i,j^*_i}(\vc y) =\sigma_{i,j^*_i}(\vc x)$ implies the same contradiction.
\end{proof}
Note that the assumption $0< \vc x$ in Lemma \ref{uniweak} can't be replaced by $0\leq \vc x, S_i(\vc x) \neq 0$ as shown in the following example.
\begin{ex}\label{cex_lem}
Let  $1<p_1,p_2,p_3<\infty,i = 3$ and $f\in\R^{2\times 2\times 2}$ the nonnegative weakly irreducible tensor defined by $f_{1,1,1}=f_{1,2,1}=f_{2,2,2}=1$ and $f_{j_1,j_2,j_3}=0$ else. Let $\vc x \coloneqq \big((1,0),(1,0)\big)$ and $\vc y \coloneqq \big((1,1),(1,0)\big) \in \R^{(6-2)}$, then $0\leq \vc x \leq \vc y,\vc x\neq\vc y$ and $S_i(\vc x)=\norm{(1,0)}_{p_1}\norm{(1,0)}_{p_2}=1$. However, $s_{3,1}(\vc x)=(1,0)=s_{3,1}(\vc y)$ and $s_{3,2}(\vc x)=(1,1)=s_{3,2}(\vc y)$, thus, if $I$ and $J$ are defined as in Lemma \ref{uniweak}, we get $J \subsetneq I$.
\end{ex}
\begin{thm}\label{gen_PF}
Let $f$ be a nonnegative weakly irreducible tensor and $1 < p_1,\ldots,p_m < \infty$ are such that there exists $i \in [m]$ with $(m-1)p_i' \leq p_k$ for every $k\in \msi$, therefore there exists $\vc x^* \in \NS^{d-d_i}_{++}$ such that $Q_i(\vc x^*)=\norm{f}_{p_1,\ldots,p_m}$. Let $\mbd_i,\Mbd_i$ as in Equation \eqref{gammadef}, then for every $\vc z \in \NS^{d-d_i}_{++}$ we have $
\mbd_i(\vc z)\leq \norm{f}_{p_1,\ldots,p_m}^{p_i'(m-1)} \leq \Mbd_i(\vc z)$ and equality holds if and only if $\vc x^* = \vc z$.
\end{thm}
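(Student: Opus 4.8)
The plan is a Collatz--Wielandt argument that compares an arbitrary $\vc z\in\NS^{d-d_i}_{++}$ with the strictly positive dual singular vector $\vc x^*$ whose existence is the first assertion of the statement (it comes from Theorem~\ref{weak_strictpos}). First I would record the key properties of $\vc x^*$: being a maximizer, hence a critical point, of $Q_i$ with critical value $\norm{f}_{p_1,\ldots,p_m}$, and being strictly positive --- so $\grad_if(\vc x^*)\neq0$ by Proposition~\ref{Characterizer_prop}\,\ref{magiclema}) --- it satisfies $s_{i,k}(\vc x^*)=\norm{f}_{p_1,\ldots,p_m}^{p_i'(p_k'-1)}\vc x^*_k$ for every $k\in\msi$ by Equation~\eqref{dual_eigensys}. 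Hence $s_{i,k,j_k}(\vc x^*)/x^*_{k,j_k}$ is independent of $j_k$, and since $(p_k'-1)(p_k-1)=1$ and $\msi$ has $m-1$ elements, $\mbd_i(\vc x^*)=\Mbd_i(\vc x^*)=\norm{f}_{p_1,\ldots,p_m}^{p_i'(m-1)}$. This gives the ``if'' direction of the equality claim and reduces the theorem to showing that $\vc x^*$ is the unique minimizer of $\Mbd_i$ and the unique maximizer of $\mbd_i$.

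For the inequalities, fix $\vc z\in\NS^{d-d_i}_{++}$ and, for $k\in\msi$, set $\theta_k\coloneqq\min_{j_k}z_{k,j_k}/x^*_{k,j_k}$, $\phi_k\coloneqq\max_{j_k}z_{k,j_k}/x^*_{k,j_k}$, $a_k\coloneqq\min_{j_k}s_{i,k,j_k}(\vc z)/z_{k,j_k}$, $b_k\coloneqq\max_{j_k}s_{i,k,j_k}(\vc z)/z_{k,j_k}$; the normalizations $\norm{\vc z_k}_{p_k}=\norm{\vc x^*_k}_{p_k}=1$ force $0<\theta_k\leq1\leq\phi_k$, with $\theta_k=1$ or $\phi_k=1$ only when $\vc z_k=\vc x^*_k$. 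To bound $\Mbd_i$ from below I would use that $\vc w\coloneqq(\theta_1\vc x^*_1,\ldots,\theta_m\vc x^*_m)$ satisfies $0<\vc w\leq\vc z$, so Lemma~\ref{order_preserving} and the homogeneity formula of Lemma~\ref{homos} give, for every $l\in\msi$,
\begin{equation*}
\Big(\prod_{k\in\msi}\theta_k\Big)^{p_i'(p_l'-1)}\theta_l^{1-p_l'}\norm{f}_{p_1,\ldots,p_m}^{p_i'(p_l'-1)}\vc x^*_l=s_{i,l}(\vc w)\leq s_{i,l}(\vc z)\leq b_l\vc z_l .
\end{equation*}
Reading this at the coordinate realizing the minimum in the definition of $\theta_l$, raising to the power $p_l-1$ and multiplying over $l\in\msi$ yields $\Mbd_i(\vc z)\geq\big(\prod_{l\in\msi}\theta_l^{(m-1)p_i'-p_l}\big)\norm{f}_{p_1,\ldots,p_m}^{p_i'(m-1)}$; the hypothesis $(m-1)p_i'\leq p_l$ together with $0<\theta_l\leq1$ makes each factor $\geq1$, so $\Mbd_i(\vc z)\geq\norm{f}_{p_1,\ldots,p_m}^{p_i'(m-1)}$. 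The bound $\mbd_i(\vc z)\leq\norm{f}_{p_1,\ldots,p_m}^{p_i'(m-1)}$ follows from the mirror computation with $\vc v\coloneqq(\phi_1\vc x^*_1,\ldots,\phi_m\vc x^*_m)\geq\vc z$, using $a_l$ in place of $b_l$, $\phi_l\geq1$ and $(m-1)p_i'-p_l\leq0$ (it can also be obtained directly by combining $\ps{\psi_{p_k}(s_{i,k}(\vc z))}{\vc z_k}=Q_i(\vc z)^{p_i'}$ with $Q_i(\vc z)\leq\norm{f}_{p_1,\ldots,p_m}$, but that route does not settle the equality case).

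For the equality case, assume $\Mbd_i(\vc z)=\norm{f}_{p_1,\ldots,p_m}^{p_i'(m-1)}$, the case of $\mbd_i$ being symmetric. Retracing the displayed chain of inequalities, all of them are forced to be equalities, which pins down $b_l=\big(\prod_{k\in\msi}\theta_k\big)^{p_i'(p_l'-1)}\theta_l^{-p_l'}\norm{f}_{p_1,\ldots,p_m}^{p_i'(p_l'-1)}$ for every $l\in\msi$; plugging this back into the homogeneity identity gives the vector equalities $s_{i,l}(\vc w)=b_l\vc w_l$ for all $l\in\msi$. If $\vc z\neq\vc x^*$ then $\theta_l<1$ for some $l$, so $\norm{\vc w_l}_{p_l}=\theta_l<1$, hence $\vc w\neq\vc z$ and Lemma~\ref{uniweak} applies to $0<\vc w\leq\vc z$; since the minimum defining each $\theta_\nu$ is attained, the set $J$ of that lemma is nonempty, so the lemma supplies --- necessarily in a block $\nu\in\msi$, as the $i$-components of $I$ and $J$ coincide --- an index $j_\nu$ with $w_{\nu,j_\nu}=z_{\nu,j_\nu}$ but $s_{i,\nu,j_\nu}(\vc w)\neq s_{i,\nu,j_\nu}(\vc z)$, whence $s_{i,\nu,j_\nu}(\vc w)<s_{i,\nu,j_\nu}(\vc z)$ by Lemma~\ref{order_preserving}. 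This contradicts $s_{i,\nu,j_\nu}(\vc w)=b_\nu w_{\nu,j_\nu}=b_\nu z_{\nu,j_\nu}\geq s_{i,\nu,j_\nu}(\vc z)$, the last step being the definition of $b_\nu$; therefore $\vc z=\vc x^*$. The main obstacle I anticipate is precisely this equality step: one must upgrade the single scalar identity $\Mbd_i(\vc z)=\norm{f}_{p_1,\ldots,p_m}^{p_i'(m-1)}$ into the full vector identities $s_{i,l}(\vc w)=b_l\vc w_l$ needed to trigger the set dichotomy of Lemma~\ref{uniweak}, while keeping careful track of which direction of the hypothesis $(m-1)p_i'\leq p_l$ is used where.
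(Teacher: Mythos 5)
Your proposal is correct and follows essentially the same route as the paper's proof: existence of the strictly positive maximizer $\vc x^*$ of $Q_i$, comparison of $\vc z$ with the scaled vectors $(\theta_1\vc x^*_1,\ldots,\theta_m\vc x^*_m)$ and $(\phi_1\vc x^*_1,\ldots,\phi_m\vc x^*_m)$ via Lemmas \ref{order_preserving} and \ref{homos} together with the exponent condition $(m-1)p_i'\leq p_k$, and Lemma \ref{uniweak} to settle the equality case. The only (harmless) difference is organizational: the paper applies Lemma \ref{uniweak} twice to obtain strict inequalities on both sides directly when $\vc z\neq\vc x^*$, whereas you first prove the weak inequalities and then rule out equality by contradiction, which amounts to the same argument.
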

\begin{proof}
First of all, note that, by Theorem \ref{weak_strictpos}, we know that there exists some singular vector $\tilde{\vc x}^*\in \NS^d$ of $f$ such that $\tilde{\vc x}^* > 0$ and $f(\tilde{\vc x}^*)=\norm{f}_{p_1,\ldots,p_m}$. Using the bijection $\Phi_i$ of Proposition \ref{dual_egual} and Corollary \ref{dual_pass}, we get the existence of $\vc x^* \in \NS^{d-d_i}$ such that $\vc x^* > 0$ and $Q_i(\vc x^*)=\norm{f}_{p_1,\ldots,p_m}$. 
From Equation \eqref{dual_eigensys} and $\vc x^* \in \NS^{d-d_i}_{++}$ follows
$
\bigl\frac{s_{i,k,j_k}(\vc x^*)}{x^*_{k,j_k}}\bigr^{p_k-1}= \norm{f}_{p_1,\ldots, p_m}^{p_i'}$ for all $k \in\msi$ and every $j_k\in [d_k].$
In particular, this implies that $\mbd_i(\vc x^*)= \norm{f}_{p_1,\ldots,p_m}^{p_i'(m-1)}=\Mbd_i(\vc x^*)$. If $\vc z = \vc x^*$, then $\mbd_i(\vc z)=\mbd_i(\vc x^*) =\norm{f}_{p_1,\ldots,p_m}^{p_i'(m-1)}=\Mbd_i(\vc x^*)=\Mbd_i(\vc z)$ and the proof is done. Suppose that $\vc z \neq \vc x^*$ and
for $l \in\msi,$ let $
\theta_l \coloneqq \min_{j_l \in [d_l]} \frac{z_{l,j_l}}{x_{l,j_l}^*} $ and $\Theta_l \coloneqq \max_{j_l \in [d_l]}\frac{z_{l,j_l}}{x_{l,j_l}^*}$, then $\theta_l\vc x_l^*\leq \vc z_l \leq \Theta_l \vc x_l^*$ and $\theta_l = \norm{\theta_l \vc x_l^*}_{p_l} \leq \norm{\vc z_l}_{p_l}=1\leq \norm{\Theta_l \vc x_l^*}_{p_l} = \Theta_l,$ i.e $\theta_l \in ]0,1]$ and $\Theta_l \in [1,\infty[$. Observe that Lemmas \ref{order_preserving} and \ref{homos} imply
\begin{equation*}
\left(\prod_{l\in\msi}\theta_l\right)^{p_i'} \theta_k^{-1} \ \psi_{p_k}\big(s_{i,k,j_k}(\vc x^*)\big) \leq \psi_{p_k}(s_{i,k,j_k}\big(\vc z)\big)\leq \left(\prod_{l\in\msi}\Theta_l\right)^{p_i'} \Theta_k^{-1}\ \psi_{p_k}\big(s_{i,k,j_k}(\vc x^*)\big).
\end{equation*}
Moreover,
\begin{equation*}
\prod_{k\in\msi} \left(\left(\prod_{l\in\msi}\theta_l\right)^{p_i'} \theta_k^{-1}\right) =\left(\prod_{l\in\msi}\theta_l\right)^{(m-1)p_i'} \left(\prod_{k\in\msi} \theta_k^{-1} \right)=\prod_{l\in\msi} \theta_l^{(m-1)p_i'-1}
\end{equation*}
and the same holds if we replace $\theta_l$ by $\Theta_l$. The assumption $(m-1)p_i'\leq p_k$ guarantees $
\Theta_k^{(m-1)p_i'-p_k} \leq 1 \leq \theta_k^{(m-1)p_i'-p_k} $ for every $k \in\msi$. Now, if $\theta_l=1$ for every $l\in\msi$, then $\vc z_l = \vc x_l^*$ for every $l\in\msi$, a contradiction to $\vc x^*\neq\vc z$. For the same reason, we can't have $\Theta_l = 1$ for every $l\in\msi$. Hence, there is some $l,k\in [m]\setminus\{i\}$ such that $\theta_l<1$ and $\Theta_k >1$. Thus $\theta_l \vc x^*_l \neq \vc z_l$ and $\vc z_k\neq \Theta_k\vc x^*_k$. Applying Lemma \ref{uniweak} to $0<(\theta_1\vc x^*_1,\ldots, \theta_m\vc x^*_m)\leq \vc z$ and $0<\vc z \leq (\Theta_1\vc x^*_1,\ldots, \Theta_m\vc x^*_m)$, we get the existence of $\mu,\nu \in [m]\setminus\{i\},j_{\mu}^+\in [d_{\mu}]$ and $j_{\nu}^-\in [d_{\nu}]$ such that $\theta_{\mu}x^*_{{\mu},j_{\mu}^-} = z_{{\mu},j_{\mu}^-}$ and $ \Theta_{\nu}x^*_{{\nu},j_{\nu}^+}=z_{{\nu},j^+_{\nu}},$ as well as
\begin{equation}\label{strict_ineq_pf}
s_{i,{{\mu},j_{\mu}^-}}(\theta_1\vc x^*_1,\ldots,\theta_m \vc x^*_m)<
s_{i,{{\mu},j^-_{\mu}}}(\vc z) \quad \text{ and }\quad s_{i,{\nu},j_{\nu}^+}(\vc z)<
s_{i,{\nu},j^+_{\nu}}(\Theta_1\vc x^*_1,\ldots,\Theta_m \vc x^*_m).
\end{equation}
Furthermore, for each $l \in [m]\setminus\{\mu,\nu,i\}$ there exists indexes $j_{l}^+,j_{l}^-\in [d_l]$ such that $\theta_lx^*_{l,j_{l}^-} = z_{l,j^-_{l}}$ and $\Theta_lx^*_{l,j_{l}^+} = z_{l,j^+_{l}} $ by construction of $\theta_l$ and $\Theta_l$. With Lemma \ref{order_preserving}, we get
\begin{eqnarray*}
\prod_{l\in\msi}\frac{\psi_{p_l}\big(s_{i,l,j_{l}^-}(\vc z)\big)}{\psi_{p_l}(z_{l,j^-_{l}})} &>&\left(\prod_{l\in\msi} \theta_l^{(m-1)p_i'-p_l}\right)\left(\prod_{l\in\msi}\frac{\psi_{p_l}\big( s_{i,l,j_{l}^-}(\vc x^*)\big)}{\psi_{p_l}(x^*_{l,j_{l}^-})}\right)
\\&\geq &\left(\prod_{l\in\msi}\frac{\psi_{p_l}\big(s_{i,l,j_{l}^-}(\vc x^*)\big)}{\psi_{p_l}(x^*_{l,j_{l}^-})}\right) = \norm{f}_{p_1,\ldots,p_m}^{p_i'(m-1)} 
> \prod_{l\in\msi}\frac{\psi_{p_l}\big(s_{i,l,j_{l}^+}(\vc z)\big)}{\psi_{p_l}(z_{l,j_{l}^+})}, 
\end{eqnarray*}
where we used Equation \eqref{strict_ineq_pf} for the strict inequalities.
Observing that
\begin{equation*}
\Mbd_i(\vc z) \geq \prod_{l\in\msi}\frac{\psi_{p_l}\big(s_{i,l,j_{l}^-}(\vc z)\big)}{\psi_{p_l}(z_{l,j_{l}^-})} = \prod_{l\in\msi}\left(\frac{s_{i,l,j_{l}^-}(\vc z)}{z_{l,j_{l}^-}} \right)^{p_l-1} \quad  \text{ and } \quad \prod_{l\in\msi}\frac{\psi_{p_l}\big(s_{i,l,j_{l}^+}(\vc z)\big)}{\psi_{p_l}(z_{l,j_{l}^+})}\geq\mbd_i(\vc z),
\end{equation*}
shows $\Mbd_i(\vc z)>\norm{f}_{p_1,\ldots,p_m}^{p_i'(m-1)}>\mbd_i(\vc z)$. 
\end{proof}
\begin{cor}\label{unique_part}
Let $f\geq 0$ be a weakly irreducible tensor and $1 < p_1,\ldots,p_m < \infty$ are such that there exists $i \in [m]$ with $(m-1)p_i' \leq p_k$ for every $k\in \msi$, therefore there exists $\vc x^* \in \NS^{d-d_i}_{++}$ with $Q_i(\vc x^*)=\norm{f}_{p_1,\ldots,p_m}$. Then $\vc x^*$ is the unique critical point of $Q_i$ in $\NS^{d-d_i}_{++}$. Moreover, if $f$ is irreducible, then $\vc x^*$ is the unique critical point of $Q_i$ in $\NS^{d-d_i}_+$.
\end{cor}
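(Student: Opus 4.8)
The plan is to read everything off the equality characterisation in Theorem~\ref{gen_PF}, together with the bijection $\Phi_i$ of Proposition~\ref{dual_egual} and the positivity statement of Theorem~\ref{strictpos_irr}.

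\emph{Uniqueness in $\NS^{d-d_i}_{++}$.} Let $\vc z \in \NS^{d-d_i}_{++}$ be a critical point of $Q_i$ and set $\lambda \coloneqq Q_i(\vc z)$. Since $\vc z \in \NS^{d-d_i}$ we have $S_i(\vc z)=1$, and by definition a critical point of $Q_i$ satisfies $\grad_i f(\vc z)\neq 0$, hence $\lambda=\norm{\grad_i f(\vc z)}_{p_i'}>0$. From Equation~\eqref{dual_eigensys}, i.e. $s_{i,k}(\vc z)=\lambda^{p_i'(p_k'-1)}\vc z_k$ for all $k\in\msi$, and $\vc z>0$ we obtain $s_{i,k,j_k}(\vc z)/z_{k,j_k}=\lambda^{p_i'(p_k'-1)}$ for every $k\in\msi$ and $j_k\in[d_k]$; since $(p_k'-1)(p_k-1)=1$ this gives $\big(s_{i,k,j_k}(\vc z)/z_{k,j_k}\big)^{p_k-1}=\lambda^{p_i'}$, independently of $j_k$. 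Therefore $\mbd_i(\vc z)=\Mbd_i(\vc z)=\lambda^{p_i'(m-1)}$, and Theorem~\ref{gen_PF} yields
\begin{equation*}
\lambda^{p_i'(m-1)}=\mbd_i(\vc z)\leq \norm{f}_{p_1,\ldots,p_m}^{p_i'(m-1)}\leq \Mbd_i(\vc z)=\lambda^{p_i'(m-1)} .
\end{equation*}
Hence both inequalities are equalities; in particular $\lambda=\norm{f}_{p_1,\ldots,p_m}$, and the equality case of Theorem~\ref{gen_PF} forces $\vc z=\vc x^*$. Thus $\vc x^*$ is the unique critical point of $Q_i$ in $\NS^{d-d_i}_{++}$.

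\emph{The irreducible case.} Assume now that $f$ is irreducible and let $\vc z\in\NS^{d-d_i}_+$ be any critical point of $Q_i$, with critical value $\lambda>0$ as above, so that $(\lambda,\vc z)\in C_i^*$ and $(\lambda,\vc z)\geq 0$. By Corollary~\ref{dual_pass}, \ref{phi_neg1}) we get $\Phi_i(\lambda,\vc z)=(\lambda,\vc z^*)$ with $(\lambda,\vc z^*)\geq 0$, so $\vc z^*\in\NS^d_+$ is a singular vector of $f$. Since $f$ is irreducible, Theorem~\ref{strictpos_irr} gives $\vc z^*\in\NS^d_{++}$, i.e. $\vc z^*>0$. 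Applying Corollary~\ref{dual_pass}, \ref{phi_neg4}) to $(\lambda,\vc z^*)>0$, and using that $\Phi_i$ is a bijection with $\Phi_i^{-1}(\lambda,\vc z^*)=(\lambda,\vc z)$, we conclude $\vc z>0$, i.e. $\vc z\in\NS^{d-d_i}_{++}$. By the previous paragraph, $\vc z=\vc x^*$.

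There is no genuine obstacle here; the only points needing care are that a critical point of $Q_i$ automatically has a strictly positive associated critical value (this is what makes $(\lambda,\vc z)$ a legitimate element of $C_i^*$ and also pins down $\lambda=\norm{f}_{p_1,\ldots,p_m}$), and that positivity is transported correctly through $\Phi_i$ and $\Phi_i^{-1}$ by means of Corollary~\ref{dual_pass}.
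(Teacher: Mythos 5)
Your proof is correct and follows essentially the same route as the paper: the uniqueness in $\NS^{d-d_i}_{++}$ is exactly the paper's argument (Equation \eqref{dual_eigensys} gives $\mbd_i(\vc z)=\Mbd_i(\vc z)=Q_i(\vc z)^{p_i'(m-1)}$, then the equality case of Theorem \ref{gen_PF} forces $\vc z=\vc x^*$). For the irreducible case you simply make explicit, via $\Phi_i$, $\Phi_i^{-1}$ and Corollary \ref{dual_pass}, the transfer of positivity from Theorem \ref{strictpos_irr} that the paper states more tersely, so no substantive difference.
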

\begin{proof}
Suppose that $\vc y^*\in \NS^{d-d_i}_{++}$ is a critical point of $Q_i$. By Equation \eqref{dual_eigensys} we know that $\left(\frac{s_{i,k,j_k}(\vc y^*)}{y^*_{k,j_k}}\right)^{p_k-1}=Q_i(\vc y^*)^{p_i'}$ for every $k\in [m]\setminus\{i\}$ and $j_k\in [d_k]$. Thus, Theorem \ref{gen_PF} implies $\vc y^* = \vc x^*$ since $Q_i(\vc y^*)^{p_i'(m-1)}=\mbd_i(\vc y^*)\leq\norm{f}_{p_1,\ldots,p_m}^{p_i'(m-1)}\leq\Mbd_i(\vc y^*)=Q_i(\vc y^*)^{p_i'(m-1)}$. Now, suppose that $f$ is irreducible. By Theorem \ref{strictpos_irr} we know that every nonnegative singular vector of $f$ is strictly positive. Since $\vc x^*$ is the unique singular vector of $f$ in $\NS^{d}_{++}$, it is also the unique singular vector in $\NS^{d}_+$. 
\end{proof}
 As shown in the next example, there are weakly irreducible tensors with nonnegative singular vectors.
 \begin{ex}
 Let $f\in \R^{2\times 2 \times 2}$ be the tensor of Example \ref{cex_lem} and $1<p_1,p_2,p_3<\infty$ such that there is $i\in[3]$ with $2p_i'\leq p_k$ for every $k\in\msi$. Then, by Theorem \ref{Gaub}, there exists a singular vector $\vc x^*\in\NS^{d}_{++}$ of $f$. However, note that $\vc y^*\coloneqq \big((0,1),(0,1),(0,1)\big)\in\NS^{d}_+$ is also a singular vector of $f$.
 \end{ex}
 Now, we discuss the assumptions made on $p_1,\ldots,p_m$ in Theorem \ref{gen_PF}. 
\begin{rmq}\label{pi_prop}
Let $1 < p_1,\ldots,p_m < \infty$ and $i\in [m]$, then
\begin{equation*}
(m-1)p_i' \leq \min_{k\in [m]\setminus\{i\}} p_k \iff m-1 \leq (p_i-1)\left(\min_{k \in [m]\setminus\{i\}} p_k-(m-1)\right).
\end{equation*}
Furthermore, if there exists $i \in [m]$ with $(m-1)p_i'\leq p_k$ for every $k \in\msi$, then at least one of the following condition is satisfied
\begin{enumerate}[a)]
\item $(m-1)p'_{l} \leq p_k,$ for every $k \in\msi$, where $p_{l} = \min_{k \in [m]} p_k$,
\item $(m-1)p'_{n} \leq p_k,$ for every $k \in\msi$, where $p_{n} = \max_{k \in [m]} p_k$.
\end{enumerate}
Moreover, note that if $(m-1)p_i' \leq p_k$ for all $k\in\msi$ and $m\geq 3$, then $2 \leq (m-1) \leq (m-1)p_i' \leq p_k$ for every $k\in \msi$. In the case $m=2$, we may also always choose $i \in [m]$ such that $p_k \geq 2$ for $k \in[2]\setminus\{i\}$ because $p_1'\leq p_2$ is equivalent to $p_2'\leq p_1$ which is true if and only if $1 \leq (p_1-1)(p_2-1)$. Thus we recover the matrix case as analyzed by Boyd \cite{Boyd}. Note however that our result implies that the strictly positive singular vector is unique. Bhaskara et al. \cite{Bhaskara} proved the uniqueness of the strictly positive singular vector but only for the case of strictly positive matrix.
\end{rmq}
\begin{proof}[Proof of Theorem \ref{gen_sum_PF}]
The min-max characterization follows from Theorem \ref{gen_PF}. The uniqueness par follows from Proposition \ref{dual_egual} and Corollary \ref{unique_part}.
\end{proof}

\section{Relation with other tensor spectral problems}\label{Other_spectra_pb}
It is well-known that if a matrix is symmetric, then its eigenvectors and singular vectors coincide up to sign. This observation can be extended for tensors as shown in the following proposition.
\begin{prop}\label{relation_prop}
Let $q_1,\ldots,q_k\in\N\setminus\{0\}$ such that $q_1+\ldots+q_k=m$ and $f \in\R^{d_1\times \ldots\times d_m}$ with $d_1 = \ldots = d_{q_1}=\tilde{d}_1, d_{(q_1+1)} = \ldots = d_{(q_1+q_2)}=\tilde{d}_2,\ldots,d_{(q_1+\ldots+q_{k-1}+1)}= \ldots = d_{(q_1+\ldots +q_k)}=\tilde{d}_k$. Suppose that $f$ is partially symmetric in the sense that
$$f_{j_1,\ldots,j_m}=f_{\sigma_1(j_1,\ldots,j_{q_1}), \sigma_2(j_{(q_1+1)},\ldots,j_{(q_1+q_2)}),\ldots,\sigma_{k}(j_{(q_1+\ldots+q_{k-1}+1)},\ldots,j_{(q_1+\ldots+q_k)})}$$
where, for $i = 1,\ldots, k$, $\sigma_i$ is any permutation of $q_i$ elements. Then, every solution $(\lambda,\vc x_1,\ldots,\vc x_k) \in \big(\R\setminus\{0\}\big)\times \R^{\tilde{d}_1}\times \R^{\tilde{d}_2}\times\ldots\times\R^{\tilde{d}_m}$ of the problem
\begin{equation}\label{singeig_pb}
\left\{ \begin{array}{l l} \grad_{(q_{1}+\ldots+q_{i-1}+1)}f(\vc x_1,\ldots,\vc x_1,\vc x_2,\ldots,\vc x_2,\ldots,\vc x_k,\ldots,\vc x_k) = \lambda \psi_{\tilde{p}_i}(\vc x_i) &\qquad i = 1,\ldots,k,\\
\norm{\vc x_1}_{\tilde{p}_1}=\ldots=\norm{\vc x_k}_{\tilde{p}_k}=1,\end{array}\right.
\end{equation}
where $1<\tilde{p}_1,\ldots,\tilde{p}_{k}<\infty$, induces a $\ell^{p_1,\ldots,p_m}$ singular vector of $f$ with $p_1 = \ldots = p_{q_1}=\tilde{p}_1, p_{(q_1+1)} = \ldots = p_{(q_1+q_2)}=\tilde{p}_2,\ldots,p_{(q_1+\ldots+q_{k-1}+1)}= \ldots = p_{(q_1+\ldots +q_k)}=\tilde{p}_k$. In particular, if $f$ and $p_1,\ldots,p_m$ satisfy the conditions of Theorem \ref{gen_sum_PF}, then there is a unique strictly positive solution $(\lambda^*,\vc x^*_1,\ldots,\vc x^*_k)$ to Problem \eqref{singeig_pb}. If $f$ is irreducible, this solution is also the only nonnegative solution. Moreover, if $(\lambda^*,\tilde{\vc x}^*_1,\ldots,\tilde{\vc x}^*_m)$ is the unique strictly positive $\ell^{p_1,\ldots,p_m}$-singular vector of $f$, then $(\lambda,\tilde{\vc x}^*_1,\tilde{\vc x}^*_{(q_1+1)},\ldots,\tilde{\vc x}^*_{(q_1+\ldots+q_{k-1}+1)})$ is the unique strictly positive solution of \eqref{singeig_pb}.
\end{prop}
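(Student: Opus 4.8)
The plan is to reduce the partially symmetric problem \eqref{singeig_pb} to the general $\ell^{p_1,\ldots,p_m}$ singular value problem for $f$ by ``unfolding'' a solution $(\lambda,\vc x_1,\ldots,\vc x_k)$ into the tuple in which each $\vc x_i$ is repeated $q_i$ times, and then invoking Theorem \ref{gen_sum_PF}. First I would set $p_1 = \ldots = p_{q_1}=\tilde p_1$, etc., as in the statement, and define $\vc y \coloneqq (\vc x_1,\ldots,\vc x_1,\vc x_2,\ldots,\vc x_2,\ldots,\vc x_k,\ldots,\vc x_k)\in\sR^d$. The partial symmetry of $f$ gives, for every index $j$ lying in the $i$-th block, that $\grad_{j} f(\vc y)$ depends only on which block $j$ belongs to, and equals $\grad_{(q_1+\ldots+q_{i-1}+1)}f(\vc y)$; combined with the first line of \eqref{singeig_pb} this yields $\sgn(f(\vc y))\grad_{j}f(\vc y)=\lambda\,\psi_{p_j}(\vc y_j)$ for all $j\in[m]$ (using that $f(\vc y)=\ps{\grad_i f(\vc y)}{\vc y_i}$ and $\norm{\vc x_i}_{\tilde p_i}=1$ fixes the sign and the value of $Q$ at $\vc y$), so $\vc y$ solves \eqref{eigensys}. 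Hence $\vc y$ is a $\ell^{p_1,\ldots,p_m}$ singular vector of $f$ with singular value $|\lambda|$, which is the first claim.

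For the uniqueness statements I would argue in both directions. Given that $f$ and $p_1,\ldots,p_m$ satisfy the hypotheses of Theorem \ref{gen_sum_PF}, that theorem produces a unique strictly positive singular vector $(\lambda^*,\tilde{\vc x}^*_1,\ldots,\tilde{\vc x}^*_m)\in\NS^d_{++}$ realizing $\norm{f}_{p_1,\ldots,p_m}$. The key point is that this vector must itself be partially symmetric, i.e. $\tilde{\vc x}^*_j$ depends only on the block of $j$: if $\pi$ is a permutation of $[m]$ preserving each block, then the partial symmetry of $f$ makes $\pi\cdot\tilde{\vc x}^*$ (the permuted tuple) again a strictly positive maximal singular vector, and by the uniqueness in Theorem \ref{gen_sum_PF} it must equal $\tilde{\vc x}^*$; running over all such $\pi$ forces equality of the coordinates within each block. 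Writing $\vc x^*_i\coloneqq\tilde{\vc x}^*_{(q_1+\ldots+q_{i-1}+1)}$ and reading \eqref{eigensys} back through the partial symmetry of $f$ shows $(\norm{f}_{p_1,\ldots,p_m},\vc x^*_1,\ldots,\vc x^*_k)$ solves \eqref{singeig_pb} and is strictly positive. Conversely, any strictly positive solution of \eqref{singeig_pb} unfolds, by the first part, to a strictly positive singular vector of $f$, which by Theorem \ref{gen_sum_PF} is unique; folding back this forces the solution of \eqref{singeig_pb} to be unique, and it must coincide with the one just constructed. The irreducible case is identical, replacing ``strictly positive'' by ``nonnegative'' and using that, when $f$ is irreducible, Theorem \ref{gen_sum_PF} guarantees the nonnegative singular vector is unique and strictly positive.

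The main obstacle I anticipate is the bookkeeping in the ``folding'' direction: one must check carefully that when $\tilde{\vc x}^*\in\NS^d_{++}$ is folded to $(\vc x^*_1,\ldots,\vc x^*_k)$, the normalization $\norm{\vc x^*_i}_{\tilde p_i}=1$ is inherited (immediate from $\norm{\tilde{\vc x}^*_j}_{p_j}=1$) but, more delicately, that the eigenvalue relation in \eqref{singeig_pb} holds with the \emph{same} $\lambda$ in every block — this is exactly where one needs that all $p_j$ in a block are equal and that $f$'s partial symmetry makes the relevant partial gradients agree. A minor secondary point is justifying the sign: since $f\geq 0$ in the Perron--Frobenius setting one has $f(\vc y)>0$, so $\sgn(f(\vc y))=1$ and \eqref{eigensys} reduces cleanly to the form in \eqref{singeig_pb} with $\lambda=\norm{f}_{p_1,\ldots,p_m}>0$; in the general (not necessarily nonnegative) first claim one carries the $\sgn(f(\vc y))$ factor through, which accounts for the $\lambda$ versus $|\lambda|$ discrepancy and is handled exactly as in the proof of Proposition \ref{dual_egual}.
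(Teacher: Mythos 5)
Your proposal is correct, and its skeleton coincides with the paper's: unfold a solution of \eqref{singeig_pb} by repeating each $\vc x_i$ exactly $q_i$ times (the paper's map $\xi$), use the partial symmetry to see that all partial gradients within a block agree, conclude via \eqref{eigensys} that the unfolded tuple is an $\ell^{p_1,\ldots,p_m}$-singular vector, and obtain uniqueness of positive (resp.\ nonnegative, in the irreducible case) solutions from the uniqueness of the strictly positive singular vector of $f$ together with injectivity of the unfolding; the ``Moreover'' part is then just folding back. The one place where you genuinely deviate is the \emph{existence} of a strictly positive solution of \eqref{singeig_pb}: the paper re-runs the proof of Theorem \ref{weak_strictpos}, applying the Gaubert--Gunawardena fixed-point theorem to the folded operator $\tilde A_{i}=\zeta\circ A_i\circ\xi$ on $\R^{\tilde d_1}\times\ldots\times\R^{\tilde d_k}$, whereas you deduce it from symmetry plus uniqueness: any block-preserving permutation of the unique maximizer $\tilde{\vc x}^*\in\NS^d_{++}$ of $Q$ leaves $Q$ (and $\NS^d_{++}$) invariant because the $d_j$ and $p_j$ are constant on blocks, so uniqueness in Theorem \ref{gen_sum_PF} forces $\tilde{\vc x}^*$ to be block-constant, i.e.\ of the form $\xi(\vc x^*_1,\ldots,\vc x^*_k)$, and folding gives the positive solution. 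Your route is shorter and avoids repeating the fixed-point construction, at the cost of being tied to the uniqueness statement (it produces existence only under the hypotheses of Theorem \ref{gen_sum_PF}, which is all the proposition needs); the paper's route is self-contained at the level of Theorem \ref{weak_strictpos} and would also work where only existence, not uniqueness, of the positive vector is available. The only cosmetic slip is writing $\sgn\big(f(\vc y)\big)\grad_jf(\vc y)=\lambda\,\psi_{p_j}(\vc y_j)$ where the right-hand side should be $|\lambda|\,\psi_{p_j}(\vc y_j)$ (equivalently $\grad_jf(\vc y)=\lambda\,\psi_{p_j}(\vc y_j)$), which you yourself correct in your final remark, so no gap results.
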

\begin{proof}
For $(\vc y_1,\ldots,\vc y_k)\in\R^{\tilde{d}_1}\times \R^{\tilde{d}_2}\times\ldots\times\R^{\tilde{d}_m}$, consider the injective map
$$\xi(\vc y_1,\ldots,\vc y_k) \coloneqq (\underbrace{\vc y_1,\ldots,\vc y_1}_{q_1 \text{ times}},\underbrace{\vc y_2\ldots,\vc y_2}_{q_2 \text{ times}},\ldots,\underbrace{\vc y_k,\ldots,\vc y_k}_{q_k \text{ times}}) \in \R^{d_1}\times \ldots\times \R^{d_m},$$
and the surjective map 
$$\zeta(\vc z_1,\ldots,\vc z_m)\coloneqq (\vc z_1,\vc z_{(q_1+1)},\ldots,\vc z_{(q_1+\ldots+q_{k-1}+1)})\in\R^{\tilde{d}_1}\times \R^{\tilde{d}_2}\times\ldots\times\R^{\tilde{d}_m} $$
defined for $(\vc z_1,\ldots,\vc z_m)\in\R^{d_1}\times\ldots\times\R^{d_m}$. Note that $\zeta\big(\xi(\vc y)\big)=\vc y$ for every $\vc \in \R^{\tilde{d}_1}\times\ldots\times\R^{\tilde{d}_m}$.
If $(\lambda,\vc x_1,\ldots,\vc x_k)$ is a solution of \eqref{singeig_pb}, then the partial symmetry of $f$ and the definition of $\xi$ imply
\begin{align*}
&\grad_{(q_1+\ldots+q_{i-1}+l)}f\big(\xi(\vc x_1,\ldots,\vc x_k)\big)=\grad_{(q_1+\ldots+q_{i-1}+1)}f\big(\xi(\vc x_1,\ldots,\vc x_k)\big)=\lambda\ \psi_{\tilde{p}_i}(\vc x_i)\\&\qquad =\lambda\ \psi_{p_i}\big(\xi(\vc x_1,\ldots,\vc x_k)_{(q_1+\ldots+q_{i-1}+l)}\big) \qquad \forall l = 1,\ldots,q_i, i = 1,\ldots,k.
\end{align*}
This shows that $\xi(\vc x_1,\ldots,\vc x_k)$ is a $\ell^{p_1,\ldots,p_m}$-singular vector of $f$ associated to the singular value $\lambda$. On the other hand, if $\xi(\vc x_1,\ldots,\vc x_k)$ is a $\ell^{p_1,\ldots,p_m}$-singular vector of $f$, then, from Equation \eqref{eigensys}, we know that $(\vc x_1,\ldots,\vc x_k)$ is a solution to Problem \eqref{singeig_pb}. Now, suppose that $f$ and $p_1,\ldots,p_m$ satisfy the conditions of Theorem \ref{gen_sum_PF}. The existence of a strictly positive solution $(\lambda^*,\vc x^*_1,\ldots,\vc x^*_k)$ to Problem \eqref{singeig_pb}, can be shown in the same way as Theorem \ref{weak_strictpos} by considering the map $\tilde{A}_{i}\colon\R^{\tilde{d}_1}\times\ldots\times\R^{\tilde{d}_k} \to \R^{\tilde{d}_1}\times\ldots\times\R^{\tilde{d}_k} $ with $\tilde{A}_{i}\coloneqq(\tilde{A}_{i,1},\ldots,\tilde{A}_{i,k})$ and $\tilde{A}_{i,k}(\vc x) \coloneqq \zeta\Big(A_{i,k}\big(\xi(\vc x)\big)\Big)$ instead of the function $A_i$ defined in the proof of Theorem \ref{weak_strictpos} (note that the partial symmetry of $f$ implies $\xi\big(\tilde{A}_{i,k}(\vc x)\big)= A_{i,k}\big(\xi(\vc x)\big)$ for every $\vc x\in\R^{\tilde{d}_1}\times\ldots\times\R^{\tilde{d}_k}$). Now, we show the uniqueness of a strictly positive solution. Let $(\lambda,\vc x_1,\ldots,\vc x_k),(\mu,\vc u_1,\ldots,\vc u_k)$ be two strictly positive solutions to \eqref{singeig_pb}. By the uniqueness result of Theorem \ref{gen_sum_PF}, we know that $\lambda=\mu$ and $\xi(\vc x_1,\ldots,\vc x_k)=\xi(\vc u_1,\ldots,\vc u_k)$. $(\lambda,\vc x_1,\ldots,\vc x_k)=(\mu,\vc u_1,\ldots,\vc u_k),$ follows then from the injectivity of $\xi$ and thus there can be only one positive solution to Problem \eqref{singeig_pb}. If $f$ is irreducible, a similar argument shows that there can be only one nonnegative solution. Finally, suppose that $(\tilde{\vc x}_1,\ldots,\tilde{\vc x}_m)$ is the unique strictly positive $\ell^{p_1,\ldots,p_m}$-singular vector of $f$ and assume that there exists a strictly positive solution $(\vc x^*_1,\ldots,\vc x^*_k)$ to \eqref{singeig_pb}, we must have $\xi(\vc x^*_1,\ldots,\vc x^*_k)=(\tilde{\vc x}_1,\ldots,\tilde{\vc x}_m)$ and thus $\zeta(\tilde{\vc x}_1,\ldots,\tilde{\vc x}_m)=(\vc x^*_1,\ldots,\vc x^*_k)$.
\end{proof}
Note that the partial symmetry assumption of Proposition \ref{relation_prop} is crucial as shown by the following example.
\begin{ex}
Let $f\in\R^{2\times 2 \times 2}$ with $f_{1,2,2}=f_{2,1,2}=0$ and $f_{i,k,l}=1$ else. Let $1<p<\infty,k=2, q_1 = 1$ and $q_2=2$. Set $\vc x_1=\vc x_2\coloneqq \big(\frac{1}{2^{1/p}},\frac{1}{2^{1/p}}\big)$, then $\grad_1f(\vc x_1,\vc x_2,\vc x_2)=\grad_2f(\vc x_1,\vc x_2,\vc x_2)=\frac{3}{2^{3/p-1}}\psi_p\big(\frac{1}{2^{1/p}},\frac{1}{2^{1/p}}\big)$ and thus $\big(\frac{3}{2^{3/p-1}},\vc x_1,\vc x_2\big)$ is a solution of \eqref{singeig_pb}. However $\grad_3f(\vc x_1,\vc x_2,\vc x_2)=\frac{1}{2^{3/p-1}}\psi_p\big(2^{\frac{p+1}{p(p-1)}},2^{\frac{1}{p(p-1)}}\big)$, i.e. $(\vc x_1,\vc x_2,\vc x_2)$ is not a $\ell^{p,p,p}$-singular value of $f$.
\end{ex}
Problem \eqref{singeig_pb} models many of the eigenvalue problems for tensors. In particular if $k=1$, then we recover the $H$-eigenvalue problem for $p=m$ and the $Z$-eigenvalue problem for $p=2$. These problems were introduced in $2005$ by Qi \cite{Qi_eig}. Still for $k=1$, the more general problem for $1<p<\infty$ is known as $\ell^p$-eigenvalue problem and was introduced by Lim in \cite{Lim}. In \cite{Chang}, \cite{Lim} and \cite{Fried}, a Perron-Frobenius Theorem is proved for $\ell^p$-eigenvalues of tensors. The requirement on $p$ is $p\geq m$ and is equivalent to the condition of Theorem \ref{gen_sum_PF} when $p_1=\ldots=p_m$. The case $k=2$ and $\tilde{p}_1=\tilde{p}_2=2$, is known as $M$-eigenvalue problem and was introduced by Chang, Qi and Zhou in \cite{Chang_rect_eig}. The more general formulation for $k=2$ and $1<\tilde{p}_1,\tilde{p}_2<\infty$ is known as $\ell^{\tilde{p}_1,\tilde{p}_2}$-singular value problem for rectangular tensors and was introduced by Ling and Qi in \cite{Qi_rect_eig}. Ling and Qi also proved a Perron-Frobenius Theorem for $\ell^{\tilde{p}_1,\tilde{p}_2}$-singular value problems and the condition on $\tilde{p}_1,\tilde{p}_2$ is $\tilde{p}_1,\tilde{p}_2\geq m$. This condition is equivalent to ours whenever $q_1\notin\{1,m-1\}$, nevertheless, for the case $q_1=1$, we only require $m-1\leq (\tilde{p}_1-1)(\tilde{p}_2-m-1)$ and for the case $q_1=m-1$, our condition becomes $m-1\leq (\tilde{p}_2-1)(\tilde{p}_1-m-1)$.

\section{Computation of the tensor norm and singular vectors of a nonnegative tensor}\label{conv_sec}
We derive now an algorithm which takes benefit of the theory developed above. More precisely, motivated by the properties of $\mbd_i, \Mbd_i$ in Equation \eqref{gammadef}, we study the sequence $(\vc x^k)_{k \in \N} \subset \NS^{d-d_i}$ produced by the Higher-order Generalized Power Method ($\algo$). The $\algo$ is a tensor generalization of the algorithm proposed by Boyd in \cite{Boyd} for matrices (they coincide for $m=2$).
\begin{figure}
\begin{center}
\boxed{
\begin{tabular}{c}
Higher-order Generalized Power Method (\algo) \\ \hline
\begin{tabular}{l}
\underline{Input:} $f\in\R^{d_1\times\ldots\times d_m}$ and $p_1,\ldots,p_m$ satisfying assumptions of Theorem \ref{gen_sum_PF}, $\epsilon >0$.\\ 
\underline{Initialization:} $i\in [m]$ with $(m-1)p'_i \leq p_k$ for all $k\in [m]\setminus\{i\}$. (if $m=2$, choose $i\in [m]$ with \\ 
\color{white}{\underline{Initialization:}} \color{black} $p_i\leq p_k$ for $k\in [2]\setminus\{i\}$), $\vc x^0>0$ with $\norm{\vc x_l^0}_{p_l}=1$ for $l\in [m]$, $k=0$. \vspace{1mm}\\
\textbf{Do}\\
\qquad $\vc z^{k} = \big(s_{i,1}(\vc x^k),\ldots,s_{i,i-1}(\vc x^k),s_{i,i+1}(\vc x^k),\ldots,s_{i,m}(\vc x^k)\big)$ \vspace{1mm} \\
\qquad $\lambda^{k+1}_- = \displaystyle \prod_{l\in\msi} \min_{j_l \in [d_l]}\left(\frac{z^k_{l,j_l}}{x^k_{l,j_l}}\right)^{\frac{p_l-1}{p_i'(m-1)}}, \quad \lambda^{k+1}_+ = \displaystyle\prod_{l\in\msi} \max_{j_l \in [d_l]}\left(\frac{z^k_{l,j_l}}{x^k_{l,j_l}}\right)^{\frac{p_l-1}{p_i'(m-1)}}$\vspace{1mm} \\
\qquad $\vc x^{k+1} = \left(\frac{\vc z^k_1}{\norm{\vc z_1^k}_{p_1}},\ldots,\frac{\vc z^k_{i-1}}{\norm{\vc z_{i-1}^k}_{p_{i-1}}},\frac{\vc z^k_{i+1}}{\norm{\vc z_{i+1}^k}_{p_{i+1}}},\ldots,\frac{\vc z^k_m}{\norm{\vc z_m^k}_{p_m}}\right)\vspace{1mm} $\\
\qquad $k = k+1$ \vspace{1mm} \\
\textbf{Until} $(\lambda^{k}_+ - \lambda^{k}_-)<\epsilon$ \vspace{1mm} \\ 
\underline{Output:} Approximate of the maximal singular vector $\vc x\coloneqq \left(\vc x_1^{k},\ldots,\vc x_{i-1}^{k},\frac{\sigma_i(\vc x^k)}{\norm{\sigma_i(\vc x^k)}_{p_i}},\vc x_{i+1}^k,\ldots,\vc x_m\right)$ and  \\
\color{white}{\underline{Output:}} \color{black} approximate of the maximal singular value $\lambda\coloneqq f(\vc x)$. Moreover, $\left|\frac{\lambda^k_-+\lambda_+^k}{2}-\norm{f}_{p_1,\ldots,p_m}\right|< \epsilon$.
\end{tabular}
\end{tabular}}
\centering{\vspace{1mm}$\sigma_i$ is defined in Eq. \eqref{def_sig}, p. \pageref{def_sig} and $s_{i,k}$ is defined in Eq. \eqref{def_s}, p. \pageref{def_s}, to select $i\in [m]$ see Remark \ref{pi_prop}.}
\label{HGPM_alg}
\end{center}
\end{figure}
In order to prove the convergence of the sequence $(\vc x^k)_{k\in\N}$ produced by \algo{}, we show first that if the starting vector $\vc x^0$ is close enough to the singular vector $\vc x^* >0$ then we have linear convergence. Then, we show that for any starting point $\vc x^0\in\NS^{d-d_i}_{++}$ the sequence converges to $\vc x^*$. Let $G:\NS^{d-d_i}_{++} \to \NS^{d-d_i}_{++}$ be defined by
\begin{equation*}
G(\vc x)\coloneqq \left(\frac{s_{i,1}(\vc x)}{\norm{s_{i,1}(\vc x)}_{p_1}},\ldots,\frac{s_{i,m}(\vc x)}{\norm{s_{i,m}(\vc x)}_{p_m}}\right).
\end{equation*}
Note that if $f$ is weakly irreducible, by Corollary \ref{dual_s_pos}, $G$ is well defined and for $k\in\N$ we have $\vc x^{k+1}=G(\vc x^k)$ where $(\vc x^k)_{k\in\N}$ is the sequence produced by $\algo$. Furthermore, if $\vc x^*$ is a strictly positive critical point of $Q_i$, then $G(\vc x^*)=\vc x^*$. The next proposition gives some properties of the sequences $(\lambda_-^k)_{k\in\N},(\lambda_+^k)_{k\in\N}$ produced by \algo{} that motivate our choice for the stopping criterium.
\begin{prop}\label{mono}
Let $f\in\R^{d_1\times \ldots \times d_m},f \geq 0$ be a weakly irreducible tensor, $1 < p_1,\ldots,p_m < \infty$ such that there exists $i \in [m]$ with $(m-1)p_i' \leq p_k$ for every $k\in\msi$ and $\mbd_i,\Mbd_i$ as in Equation \eqref{gammadef}. Furthermore, consider the sequences $(\lambda_-^k)_{k\in\N}$ and $(\lambda_+^k)_{k\in\N}$ produced by \algo{}, then
\begin{equation*}
\lambda_-^{k}\ \leq \ \lambda_-^{k+1} \ \leq \ \norm{f}_{p_1,\ldots,p_m} \ \leq\  \lambda_+^{k+1}\ \leq \ \lambda_+^{k}  \qquad \forall k \in \N,
\end{equation*}
and when the Algorithm stops we have $\left|\frac{\lambda^k_-+\lambda_+^k}{2}-\norm{f}_{p_1,\ldots,p_m}\right|< \epsilon$.
\end{prop}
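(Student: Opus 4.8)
The plan is to recognise that the numbers produced by \algo{} are exactly the Collatz--Wielandt ratios of Theorem~\ref{gen_PF} evaluated along the orbit of the map $G$. First I would note that weak irreducibility of $f$, together with $\vc x^0>0$ and Corollary~\ref{dual_s_pos}, gives $s_{i,l}(\vc x^k)>0$ for all $l\neq i$, so that $G$ is well defined on $\NS^{d-d_i}_{++}$ and $\vc x^{k}=G(\vc x^{k-1})\in\NS^{d-d_i}_{++}$ for every $k$. Unwinding the definitions in \algo{} one checks that $\lambda_-^{k+1}=\mbd_i(\vc x^k)^{1/(p_i'(m-1))}$ and $\lambda_+^{k+1}=\Mbd_i(\vc x^k)^{1/(p_i'(m-1))}$, with $\mbd_i,\Mbd_i$ as in \eqref{gammadef} (here one uses that $t\mapsto t^{p_l-1}$ is increasing, so the power may be pulled out of the min and the max). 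The middle inequalities $\lambda_-^{k+1}\leq\norm{f}_{p_1,\ldots,p_m}\leq\lambda_+^{k+1}$ then follow directly from Theorem~\ref{gen_PF} applied to $\vc z=\vc x^k$.

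The core of the argument is the monotonicity, for which I would establish
\begin{equation*}
\mbd_i\left(G(\vc y)\right)\geq\mbd_i(\vc y)\and\Mbd_i\left(G(\vc y)\right)\leq\Mbd_i(\vc y)\qquad\text{for all }\vc y\in\NS^{d-d_i}_{++};
\end{equation*}
applying this with $\vc y=\vc x^{k-1}$ and using that $t\mapsto t^{1/(p_i'(m-1))}$ is increasing yields $\lambda_-^k\leq\lambda_-^{k+1}$ and $\lambda_+^{k+1}\leq\lambda_+^k$. To prove the first inequality, fix $\vc y$, set $c_l\coloneqq\min_{j_l}\left(s_{i,l,j_l}(\vc y)/y_{l,j_l}\right)^{p_l-1}$ for $l\neq i$ (so $\mbd_i(\vc y)=\prod_{l\neq i}c_l$), and note that componentwise $s_{i,l}(\vc y)\geq c_l^{p_l'-1}\vc y_l$, hence $\beta_l\coloneqq\norm{s_{i,l}(\vc y)}_{p_l}\geq c_l^{p_l'-1}$. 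Writing $\vc x\coloneqq G(\vc y)$, so that $\vc x_l=\beta_l^{-1}s_{i,l}(\vc y)$, I would expand $s_{i,l,j_l}(\vc x)/x_{l,j_l}$ using the homogeneity of Lemma~\ref{homos}, then bound $s_{i,l}$ evaluated at $\left(s_{i,k}(\vc y)\right)_{k\neq i}$ from below by combining the monotonicity of Lemma~\ref{order_preserving} with Lemma~\ref{homos} once more. Raising to the power $p_l-1$, simplifying with the H\"older conjugacy relations ($p_l'(p_l-1)=p_l$, $(p_l'-1)(p_l-1)=1$, $(p_l'-1)p_l=p_l'$), and multiplying over $l\neq i$, the factors involving the $\beta_l$ collapse into $\prod_{l\neq i}\beta_l^{\,p_l-p_i'(m-1)}$; here the hypothesis $(m-1)p_i'\leq p_l$ makes each exponent $p_l-p_i'(m-1)$ nonnegative, so the bound $\beta_l\geq c_l^{p_l'-1}$ may at last be substituted, after which a routine cancellation of exponents leaves exactly $\prod_{l\neq i}c_l=\mbd_i(\vc y)$. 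The inequality for $\Mbd_i$ is obtained identically, with every inequality reversed and $c_l$ replaced by the corresponding maxima; the nonnegativity of $p_l-p_i'(m-1)$ again ensures the direction is preserved when $\beta_l$ is raised to that power.

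Finally, the assertion about the stopping rule is elementary: when the loop exits, $\lambda_+^k-\lambda_-^k<\epsilon$, and since the chain just proved gives $\lambda_-^k\leq\norm{f}_{p_1,\ldots,p_m}\leq\lambda_+^k$, the midpoint satisfies $\left|\tfrac{\lambda_-^k+\lambda_+^k}{2}-\norm{f}_{p_1,\ldots,p_m}\right|\leq\tfrac12\left(\lambda_+^k-\lambda_-^k\right)<\epsilon$. I expect the main obstacle to be the exponent bookkeeping inside the monotonicity step — in particular, recognising that the only place the assumption $(m-1)p_i'\leq p_k$ enters is to fix the sign of the aggregated exponent $p_l-p_i'(m-1)$, which appears only after the product over $l$ has been taken.
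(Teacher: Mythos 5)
Your proposal is correct and follows essentially the same route as the paper: identify $\lambda_\pm^{k+1}$ with $\mbd_i(\vc x^k)^{1/(p_i'(m-1))}$ and $\Mbd_i(\vc x^k)^{1/(p_i'(m-1))}$, get the middle inequalities from Theorem \ref{gen_PF}, and prove $\mbd_i(G(\vc y))\geq\mbd_i(\vc y)$, $\Mbd_i(G(\vc y))\leq\Mbd_i(\vc y)$ by combining Lemmas \ref{order_preserving} and \ref{homos}, with the hypothesis $(m-1)p_i'\leq p_k$ entering only through the sign of the aggregated exponent. Your bookkeeping (substituting $\beta_l\geq c_l^{p_l'-1}$ into $\prod_l\beta_l^{p_l-p_i'(m-1)}$) is just a reorganized version of the paper's comparison of the normalized ratios $\phi_k/\norm{s_{i,k}(\vc z)}_{p_k}\leq 1$ raised to exponents $p_i'(m-1)-1$ versus $p_k-1$, and the stopping-criterion argument is the same.
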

\begin{proof}
The proof is in two steps, first we prove that $\mbd_i(\vc z) \leq \mbd_i\big(G(\vc z)\big) \leq \norm{f}_{p_1,\ldots,p_m}^{p_i'(m-1)}\leq \Mbd_i\big(G(\vc z)\big) \leq  \Mbd_i(\vc z)$ for every $\vc z \in \NS_{++}^{d-d_i}$ and then we conclude the proof. Let $k\in\msi,\phi_k \coloneqq \min_{j_k\in[d_k]}\frac{s_{i,k,j_k}(\vc z)}{z_{k,j_k}}$ and $\varphi_k \coloneqq \max_{j_k\in[d_k]}\frac{s_{i,k,j_k}(\vc z)}{z_{k,j_k}}$, then we have 
$\mbd_i(\vc z)=\prod_{k\in\msi} \phi_k^{p_k-1}, \vc z_k \phi_k \leq s_{i,k}(\vc z),\Mbd_i(\vc z)=\prod_{k\in\msi} \varphi_k^{p_k-1} $ and $\vc z_k \varphi_k \geq s_{i,k}(\vc z).$ Note that 
\begin{equation*}
0<\frac{\phi_k}{\norm{s_{i,k}(\vc z)}_{p_k}} =\frac{\norm{\vc z_k\phi_k}_{p_k}}{\norm{s_{i,k}(\vc z)}_{p_k}} \leq \frac{\norm{s_{i,k}(\vc z)}_{p_k}}{\norm{s_{i,k}(\vc z)}_{p_k}}=1 \leq \frac{\norm{\vc z_k\varphi_k}_{p_k}}{\norm{s_{i,k}(\vc z)}_{p_k}} = \frac{\varphi_k}{\norm{s_{i,k}(\vc z)}_{p_k}} <\infty,
\end{equation*}
follows from $\vc z\in\NS^{d-d_i}$.
We only prove the inequality for $\mbd_i$ since the other inequality can be proved in the same way. By Lemmas \ref{order_preserving} and \ref{homos}, for any $l \in\msi$ and any $j_l \in [d_l]$ we have
\begin{equation*}
\psi_{p_l}\Big(s_{i,l,j_l}\big(G(\vc z)\big)\Big)\geq \left(\frac{\phi_l}{\norm{s_{i,l}(\vc z)}_{p_l}}\right)^{-1}\left(\prod_{k\in\msi} \frac{\phi_k}{\norm{s_{i,k}(\vc z)}_{p_k}}\right)^{p_i'}\psi_{p_l}\big(s_{i,l,j_l}(\vc z)\big).
\end{equation*}
With $\frac{\phi_k}{\norm{s_{i,k}(\vc z)}_{p_k}}\leq 1$ and $1<p_i'(m-1) \leq p_k$ for every $k \in\msi$, we get
\begin{equation*}
\prod_{k\in\msi}\left( \frac{\phi_k}{\norm{s_{i,k}(\vc z)}_{p_k}}\right)^{p_i'(m-1)-1}  \geq \prod_{k\in\msi}\left(\frac{\phi_k}{\norm{s_{i,k}(\vc z)}_{p_k}}\right)^{p_k-1}=\mbd_i(\vc z)\prod_{k\in\msi}\frac{1}{\norm{s_{i,k}(\vc z)}_{p_k}^{p_k-1}},
\end{equation*}
Combining these facts shows that for every $j_1\in [d_1], \ldots, j_m \in [d_m]$ holds
\begin{equation*}
\mbd_i(\vc z) \leq \prod_{l\in\msi}\dfrac{\ \ \psi_{p_l}\Big(s_{i,l,j_l}\big(G(\vc z)\big)\Big)\ \ }{\dfrac{\psi_{p_l}\big(s_{i,l,j_l}(\vc z)\big)}{\norm{s_{i,l}(\vc z)}_{p_l}^{p_l-1}}}= \prod_{l\in\msi}\ \frac{\big(s_{i,l,j_l}(\vc z^s)\big)^{p_l-1}}{\left(\dfrac{s_{i,l,j_l}(\vc z)}{\norm{s_{i,l}(\vc z)}_{p_l}}\right)^{p_l-1}}=\prod_{l\in\msi}\ \left(\frac{s_{i,l,j_l}\big(G(\vc z)\big)}{\big(G(\vc z)\big)_{l,j_l}}\right)^{p_l-1}.
\end{equation*}
Take the minimum over $j_1\in [d_1],\ldots,j_m \in [d_m]$ to get $
\mbd_i(\vc z) \leq \mbd_i\big(G(\vc z)\big).$ By Theorem \ref{gen_PF}, we know that $\mbd_i\big(G(\vc z)\big) \leq \norm{f}_{p_1,\ldots,p_m}^{p_i'(m-1)}\leq \Mbd_i\big(G(\vc z)\big)$. This concludes the first step of our proof. Now, if $(\vc x^k)_{k\in\N}$ is the sequence produced by \algo{}, then, by Corollary \ref{dual_s_pos}, $(\vc x^k)_{k\in\N}\subset \NS^{d-d_i}_{++}$ since $\vc x^0 \in \NS^{d-d_i}_{++}$ by assumption. Moreover, note that $\vc x^{k+1}=G(\vc x^k),\lambda^{k+1}_-=\big(\mbd_i(\vc x^k)\big)^{\frac{1}{p_i'(m-1)}}$ and $\lambda^{k+1}_+=\big(\Mbd_i(\vc x^k)\big)^{\frac{1}{p_i'(m-1)}}$ for every $k\in\N$. It follows that $\lambda_-^{k}\leq \lambda_-^{k+1} \leq \norm{f}_{p_1,\ldots,p_m} \leq \lambda_+^{k+1}\leq \lambda_+^{k}$ for every $k\in \N$. Finally, if $(\lambda_+^k-\lambda_-^k)<\epsilon$, subtracting $\frac{\lambda_+^k-\lambda_-^k}{2}$ from the inequality $\lambda_-^{k}\leq \norm{f}_{p_1,\ldots,p_m}\leq \lambda_+^{k}$, shows
\begin{equation*}-\epsilon < -\frac{\lambda_+^k-\lambda_-^k}{2} \leq \norm{f}_{p_1,\ldots,p_m}\leq \frac{\lambda_+^k-\lambda_-^k}{2}<\epsilon.\qedhere
\end{equation*}
\end{proof}
Now, we prove the convergence of the sequences produced by \algo{}.
\begin{lem}\label{prop_DF}
Let $f\geq 0$ be a weakly irreducible tensor and $1 < p_1,\ldots,p_m < \infty$ such that there exists $i \in [m]$ with $(m-1)p_i' \leq p_k$ for all $k\in\msi$. Let $\vc x^*\in \NS^{d-d_i}_{++}$ be a critical point of $Q_i$ and $\lambda \coloneqq Q_i(\vc x^*)$. Furthermore, consider the function $F:\sR^{d-d_i} \to \sR^{d-d_i}$ defined by $F(\vc x)\coloneqq\big(F_1(\vc x),\ldots,F_m(\vc x)\big)$ where for each $k\in\msi$, $
F_k(\vc x) \coloneqq \lambda^{1-p_i'(p_{k}'-1)}\norm{\vc x_k}_{p_k}^{p_k'+(m-1)p_i'(1-p'_k)+\rho}s_{i,k}(\vc x)
$ and $\rho > 0$ is such that $p_l'+(m-1)p_i'(1-p'_l)+\rho>0$ for all $l\in [m]\setminus\{i\}$. Let us denote by $B$ the Jacobian matrix of $F$ at $\vc x^*$. Then the function $F$ is positively $(\rho+1)$-homogeneous, $B$ is primitive with Perron-root $\lambda_1 \coloneqq (\rho+1)\lambda$ and for any $\vc g, \vc h\in \sR^{d-d_i},k\in\msi$, we have 
\begin{equation*}
\ps{|\vc x^*_k|^{p_k-2}\hap \vc h_k}{(B\vc g)_k} = \ps{|\vc x^*_k|^{p_k-2}\hap \vc g_k}{(B\vc h)_k},
\end{equation*}
where $\hap $ denotes the Hadamard product and $|\vc y|^{q}\coloneqq \big(|y_1|^q,\ldots,|y_n|^q\big)$ for $\vc y\in \R^n, q \in \R$.
\end{lem}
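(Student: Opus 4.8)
Below is the plan I would follow; I treat the weighted self-adjointness as the analytic core and everything else as setup.

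\medskip
\noindent\textbf{Step 1: homogeneity and the Euler relation.} First I substitute $t\vc x$, $t>0$, into $F_k$. By Lemma \ref{homos} we have $s_{i,k}(t\vc x)=t^{(m-1)p_i'(p_k'-1)+1-p_k'}s_{i,k}(\vc x)$, while $\norm{t\vc x_k}_{p_k}^{\beta_k}=t^{\beta_k}\norm{\vc x_k}_{p_k}^{\beta_k}$ with $\beta_k\coloneqq p_k'+(m-1)p_i'(1-p_k')+\rho=p_k'-(m-1)p_i'(p_k'-1)+\rho$. Adding the two exponents, the $(m-1)p_i'(p_k'-1)$ terms cancel and the $\mp p_k'$ terms cancel, leaving the $k$-independent value $1+\rho$, so $F$ is positively $(\rho+1)$-homogeneous. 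Since $\vc x^*\in\NS^{d-d_i}_{++}$ is a critical point of $Q_i$ with value $\lambda$, Equation \eqref{dual_eigensys} gives $s_{i,k}(\vc x^*)=\lambda^{p_i'(p_k'-1)}\vc x^*_k$ and $\norm{\vc x^*_k}_{p_k}=1$, hence $F_k(\vc x^*)=\lambda^{1-p_i'(p_k'-1)}\lambda^{p_i'(p_k'-1)}\vc x^*_k=\lambda\vc x^*_k$, i.e. $F(\vc x^*)=\lambda\vc x^*$, and $\lambda=Q_i(\vc x^*)>0$. Differentiating $F(t\vc x)=t^{\rho+1}F(\vc x)$ at $t=1$ yields $B\vc x^*=(\rho+1)F(\vc x^*)=(\rho+1)\lambda\,\vc x^*$, so $\lambda_1\coloneqq(\rho+1)\lambda>0$ is an eigenvalue of $B$ with strictly positive eigenvector $\vc x^*$.

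\medskip
\noindent\textbf{Step 2: primitivity and identification of the Perron root.} Near $\vc x^*$ the map $F$ is $C^1$: by Proposition \ref{Characterizer_prop}, \ref{magiclema}) we have $\grad_i f(\vc x^*)>0$ and by Corollary \ref{dual_s_pos} we have $s_{i,k}(\vc x^*)>0$, so every argument to which a $\psi_{p_i'}$ or $\psi_{p_k'}$ is applied in the definition of $s_{i,k}$ is strictly positive at $\vc x^*$, and $B$ is a bona fide Jacobian. From Lemma \ref{order_preserving} together with $\beta_k>0$, $F$ is monotone on $\sR^{d-d_i}_{+}$, whence $B\geq 0$ entrywise. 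Each diagonal block $B_{k,k}$ is in fact strictly positive, because differentiating the factor $\norm{\vc x_k}_{p_k}^{\beta_k}$ contributes the strictly positive rank-one matrix $\lambda\beta_k\,\vc x^*_k\,\psi_{p_k}(\vc x^*_k)^\top$ while the $s_{i,k}$-contribution is $\geq 0$; thus $B$ has a strictly positive diagonal and is aperiodic. Irreducibility of $B$ I obtain from weak irreducibility of $f$ by the graph-translation argument already used for step i) in the proof of Theorem \ref{weak_strictpos}: connectedness of $G(f)$ forces the digraph of $B$ to be strongly connected. An aperiodic irreducible nonnegative matrix is primitive, so $B$ is primitive, and by Perron--Frobenius its Perron root is the unique eigenvalue possessing a positive eigenvector; by Step 1 this root is $(\rho+1)\lambda$.

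\medskip
\noindent\textbf{Step 3: weighted self-adjointness.} The key device is a potential for the family $(s_{i,k})_k$: putting $h(\vc x)\coloneqq\tfrac1{p_i'}\norm{\grad_i f(\vc x)}_{p_i'}^{p_i'}$ and expanding $f$ by multilinearity in its $i$-th argument gives $\grad_k h(\vc x)=\psi_{p_k}\big(s_{i,k}(\vc x)\big)$, so that $s_{i,k}(\vc x)=\psi_{p_k'}\big(\grad_k h(\vc x)\big)$ and $F_k(\vc x)=\lambda^{1-p_i'(p_k'-1)}\norm{\vc x_k}_{p_k}^{\beta_k}\psi_{p_k'}\big(\grad_k h(\vc x)\big)$. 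Differentiating at $\vc x^*$, the off-diagonal blocks of $B$ are the symmetric Hessian $\grad^2 h(\vc x^*)$ multiplied, in each row $(k,a)$, by a factor involving $|\partial_{k,a}h(\vc x^*)|^{p_k'-2}$; using $\grad_k h(\vc x^*)=\lambda^{p_i'}\psi_{p_k}(\vc x^*_k)$ one checks that this factor times $|x^*_{k,a}|^{p_k-2}$ becomes independent of the component index $a$ and that the residual powers of $\lambda$ collapse to a single $k$-independent constant, so that with $D_k$ the diagonal matrix with entries proportional to $|x^*_{k,a}|^{p_k-2}$ one obtains the detailed-balance relations $D_kB_{k,l}=B_{l,k}^\top D_l$; the diagonal blocks are handled in the same way once the rank-one $\norm{\vc x_k}_{p_k}$-term is checked to obey the same relation — this is precisely the role played by the exponents $1-p_i'(p_k'-1)$ and $p_k'+(m-1)p_i'(1-p_k')+\rho$. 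Summing $D_kB_{k,l}=B_{l,k}^\top D_l$ over $k,l\in\msi$ then delivers the stated symmetry $\ps{|\vc x^*_k|^{p_k-2}\hap\vc h_k}{(B\vc g)_k}=\ps{|\vc x^*_k|^{p_k-2}\hap\vc g_k}{(B\vc h)_k}$ for all $\vc g,\vc h$.

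\medskip
\noindent The parts I expect to be routine, given the results already established, are Steps 1 and 2. The real obstacle lies in Step 3: carrying the chain rule cleanly through the nested maps $\psi_{p_i'}$, $\psi_{p_k'}$ and the norm factor, and verifying that the prescribed exponents together with the weights $|\vc x^*_k|^{p_k-2}$ are calibrated exactly so that \emph{every} block of $B$ — including the diagonal ones coming from the explicit $\norm{\vc x_k}_{p_k}$ terms — satisfies the detailed-balance relation $D_kB_{k,l}=B_{l,k}^\top D_l$.
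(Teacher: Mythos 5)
Your Steps 1 and 2 are correct and essentially the paper's argument: the exponent count giving $(\rho+1)$-homogeneity, $F(\vc x^*)=\lambda\vc x^*$ via Equation \eqref{dual_eigensys}, the Euler-type relation $B\vc x^*=(\rho+1)\lambda\vc x^*$, the strictly positive rank-one contribution $\lambda\left(p_k'+(m-1)p_i'(1-p_k')+\rho\right)\vc x^*_k\,\psi_{p_k}(\vc x^*_k)^\top$ in each diagonal block, and the identification of the Perron root all appear there. Your route to $B\geq 0$ (monotonicity of $F$) and to irreducibility of $B$ (translating connectedness of $G(f)$ into edges and $2$-paths through layer $i$, as in step i) of Theorem \ref{weak_strictpos}) differs from the paper, which computes $\partial_{l,j_l}s_{i,k,j_k}(\vc x^*)$ explicitly and gets irreducibility by a short contradiction from $f(\vc x^*)=\lambda>0$ without re-using weak irreducibility; your route is workable, but certifying strict positivity of the relevant off-diagonal entries requires the explicit derivative (or your Hessian formula from Step 3), which you defer.

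The genuine gap is in Step 3, which you yourself flag as the core. The potential $h(\vc x)=\tfrac{1}{p_i'}\norm{\grad_if(\vc x)}_{p_i'}^{p_i'}$ with $\grad_kh=\psi_{p_k}\big(s_{i,k}\big)$ is correct and is a cleaner packaging of the paper's $L_{i,k}$-computation, but your calibration claim fails on two counts. First, writing $H$ for the Hessian of $h$ at $\vc x^*$ in the variables $\vc x_k$, $k\in\msi$, the chain rule gives for $l\neq k$ that $B_{(k,a),(l,b)}=(p_k'-1)\,\lambda^{1-p_i'}\,|x^*_{k,a}|^{2-p_k}H_{(k,a),(l,b)}$, because differentiating $\psi_{p_k'}$ at $\grad_kh(\vc x^*)=\lambda^{p_i'}\psi_{p_k}(\vc x^*_k)$ produces the $k$-dependent scalar $(p_k'-1)$ in addition to the modulus factor that $|x^*_{k,a}|^{p_k-2}$ cancels; hence with $D_k=\diag\big(|x^*_{k,a}|^{p_k-2}\big)$ one gets $D_kB_{kl}=(p_k'-1)\lambda^{1-p_i'}H_{kl}$ while $B_{lk}^\top D_l=(p_l'-1)\lambda^{1-p_i'}H_{kl}$, so detailed balance with your weights fails whenever $p_k\neq p_l$; the correct weights are $(p_k-1)|x^*_{k,a}|^{p_k-2}$. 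Second, even with corrected weights, summing $D_kB_{kl}=B_{lk}^\top D_l$ over $k,l$ yields only the aggregated identity $\sum_{k\in\msi}(p_k-1)\ps{|\vc x^*_k|^{p_k-2}\hap\vc h_k}{(B\vc g)_k}=\sum_{k\in\msi}(p_k-1)\ps{|\vc x^*_k|^{p_k-2}\hap\vc g_k}{(B\vc h)_k}$, not the componentwise identity of the statement: testing that identity with $\vc h$ supported in block $k$ and $\vc g$ supported in block $l\neq k$ forces $B_{kl}=0$, i.e.\ a block-diagonal $B$, which is incompatible with the irreducibility you establish in Step 2 as soon as $m\geq 3$. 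To be fair, this defect is partly inherited: the paper's own final step asserts $\ps{L_{i,k}(\vc x;\vc h)}{\vc g_k}=\ps{L_{i,k}(\vc x;\vc g)}{\vc h_k}$ for all $\vc g,\vc h$, while its displayed multilinearity relations (which pair the $k$-th term against the $l$-th term of the swapped expression) likewise deliver only the aggregated, $(p_k-1)$-weighted self-adjointness of $B$; that aggregated version is what your approach proves and what should then be fed (with the corresponding adjustment) into the spectral-radius argument of Proposition \ref{specrad}.
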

\begin{proof}
Since $\vc x^* >0$, there exists some open neighborhood $U \subset \sR^{d-d_i}_{++}$ of $\vc x^*$ such that $s_{i,k}$ is smooth on $U$ for any $k\in\msi$. Existence of such a neighborhood is obvious since $f(\vc x)$ and $\grad_if(\vc x)$ are smooth on $\sR^d$. Furthermore, $f$ is weakly irreducible thus, by Proposition \ref{Characterizer_prop}, we have $\partial_{k,j_k}f(\vc x)> 0$ for any $\vc x> 0$ and $\psi_p(t)$ is smooth on $\R\setminus\{0\}$. With help of Lemma \ref{homos} it is straightforward to check that $F$ is positively $(\rho+1)$-homogeneous. Let $
\alpha_{k}(\vc x) \coloneqq \lambda^{1-p_i'(p_k'-1)}\norm{\vc x_k}_{p_k}^{p_k'+(m-1)p_i'(1-p'_k)+\rho},$ then for every $\vc x \in U$ and $l\in\msi,$ we have
$\grad_l\alpha_{k}(\vc x)= \lambda^{1-p_i'(p_k'-1)}\big(p_k'+(m-1)p_i'(1-p'_k)+\rho\big) \norm{\vc x_k}_{p_k}^{p_k'+(m-1)p_i'(1-p_k)+\rho-p_k}\psi_{p_k}(\vc x_k)$ if $l = k$ and $\grad_l\alpha_{k}(\vc x)= 0$ if $k \neq l$. In particular, $\vc x^* \in \NS^d_{++}$ thus $\grad_k \alpha_k(\vc x^*) >0$. Note that  for every $k,l\in\msi,j_l \in [d_l]$ and $j_k\in[d_k]$, we have $
\partial_{l,j_l} F_{k,j_k}(\vc x) =s_{i,k,j_k}(\vc x) \partial_{l,j_l} \alpha_k(\vc x)+ \alpha_k(\vc x)\partial_{l,j_l}s_{i,k,j_k}(\vc x)$. Moreover, using $(p'-2)=(2-p)(p'-1)$ for $1<p<\infty$, a tedious computation shows
\begin{align*}
\partial_{l,j_l}s_{i,k,j_k}(\vc x^*) =& (p_i'-1)(p_k'-1)\lambda^{p_i'(p_k'-1)-2}\big|x^*_{k,j_k}\big|^{2-p_k}\ps{\big|\vc x^*_i\big|^{2-p_i}\hap \grad_i \partial_{l,j_l}f(\vc x^*)}{\grad_i \partial_{k,j_k}f(\vc x^*)}\\ &+(p_k'-1)\lambda^{p_i'(p_k'-1)-1}\big|x^*_{k,j_k}\big|^{2-p_k}\partial_{l,j_l}\partial_{k,j_k}f(\vc x^*) \qquad \forall k,l\in [m], j_k\in[d_k],j_l\in[d_l].
\end{align*}
In particular, note that $\partial_{l,j_l}s_{i,k,j_k}(\vc x^*)\geq 0$ follows from $f \geq 0$ and $\vc x^* >0$, thus $B \geq 0$ because $s_{i,k,j_k}(\vc x^*)\geq 0$ by Lemma \ref{order_preserving}, $\alpha_k(\vc x^*)\geq 0$ and $\partial_{l,j_l}\alpha_k(\vc x^*)\geq 0$.
From Equation \eqref{dual_eigensys} and $\vc x^* \in \NS^d$, we know that $s_{i,k}(\vc x^*)=\lambda^{p_i'(p_k'-1)}\vc x^*_k > 0$ and $\partial_{k,j_k}\alpha_k(\vc x^*)=\big(p_k'+(m-1)p_i'(1-p'_k)+\rho\big)\lambda^{1-p_i'(p_k'-1)}\psi_{p_k}(x_{k,j_k}^*)>0$, thus 
\begin{align*}
B_{(k,j_k),(k,l_k)}&=s_{i,k,j_k}(\vc x^*) \partial_{k,l_k} \alpha_k(\vc x^*)+\alpha_k(\vc x^*)\partial_{k,l_k}s_{i,k,j_k}(\vc x^*) \\ &\geq s_{i,k,j_k}(\vc x^*) \partial_{k,l_k} \alpha_k(\vc x^*) = \lambda\ \big(p_k'+(m-1)p_i'(1-p'_k)+\rho\big)\big(x^*_{k,j_k}\big)^{p_k} >0
\end{align*}
for all $k \in \msi$ and $j_k,l_k \in [d_k].$ This shows that the matrix $B$ has strictly positive blocks of size $d_k\times d_k$ on its main diagonal. In order to prove that $B$ is an irreducible matrix, we show that for every $k,l \in [m]\setminus\{i\}$ with $k\neq l$ there exists $j_k \in [d_k]$ and $j_l \in [d_l]$ such that $\max\big\{B_{(k,j_k),(l,j_l)},B_{(l,j_l),(k,j_k)}\big\} >0$. This would imply that the graph associated to the adjacency matrix $B$ is connected. Fix $k,l\in\msi$ with $k\neq l$ and suppose by contradiction that for every $j_k \in [d_k]$ and every $j_l \in [d_l],$ we have $
0=B_{(k,j_k),(l,j_l)}=B_{(l,j_l),(k,j_k)}$. It follows that $\partial_{l,j_l}s_{i,k,j_k}(\vc x^*)=0$ and $\partial_{l,j_l}\partial_{k,j_k}f(\vc x^*)=0$ for every $j_k\in [d_k],j_l\in [d_l]$. So, $0 = \partial_{l,j_l}f(\vc x^*)=\ps{\vc x_k^*}{\grad_k\partial_{l,j_l}f(\vc x^*)}$ for every $j_l\in [d_l]$, thus $0=\ps{\vc x_l}{\grad_lf(\vc x^*)}=f(\vc x^*) = \norm{f}_{p_1,\ldots,p_m}$, a contradiction. Hence the graph associated to the nonnegative adjacency matrix $B$ is connected and $B$ has strictly positive main diagonal entries, it follows from Lemma 8.5.5 and Theorem 8.5.2 in \cite{Horn} that $B$ is primitive. Now, using $s_{i,k}(\vc x^*) = \lambda^{p_i'(p_k'-1)} \vc x_k^*$ it can be observed that $F(\vc x^*) = \lambda\vc x^*$. Approximating $\alpha \mapsto F(\alpha \vc x^*)$ linearly at $1$ shows $
\alpha^{1+\rho} \lambda \vc x^* =\alpha^{1+\rho} F( \vc x^*) = F( \alpha \vc x^*) = F(\vc x^*) + (\alpha-1)B\vc x^* +o(\alpha -1)$ for every $\alpha > 1$.
Subtract $\lambda \vc x^* + (\alpha-1)B\vc x^* $ on both sides and take the limit $\alpha\to 1$ to get
\begin{equation*}
(1+\rho)\lambda \vc x^*-B\vc x^*= \lim_{\alpha \downarrow 1}\frac{(\alpha^{1+\rho}-1)\lambda \vc x^*-(\alpha-1)B\vc x^*}{(\alpha -1)} =0,
\end{equation*}
i.e. $(1+\rho)\lambda>0$ is the Perron-root of $B$ associated to the strictly positive eigenvector $\vc x^*$. Now, we prove that $
\ps{|\vc x^*_k|^{p_k-2}\hap \vc h_k}{(B\vc g)_k} = \ps{|\vc x^*_k|^{p_k-2}\hap \vc g_k}{(B\vc h)_k}.$ Since $F$ is differentiable on $U$, for every $\vc h\in \sR^{d-d_i}$ we have $
(B\vc h)_k = \big(\delta F(\vc x^*;\vc h)\big)_k = \delta F_k(\vc x^*;\vc h)$ where 
$
\delta F(\vc x; \vc h) \coloneqq \lim_{\epsilon \to 0}\epsilon^{-1}\big(F(\vc x + \epsilon \vc h)-F(\vc x)\big)
$
is the directional derivative of $F$ at $\vc x$ in the direction $\vc h$. The multiplication rule for directional derivative, $\vc x^*\in\NS^{d-d_i}$ and $s_{i,k}(\vc x^*)=\lambda^{p_i'(p_k'-1)}\vc x^*$, show
\begin{equation*}
\delta F_k(\vc x^*;\vc h) = \alpha_k(\vc x^*)\delta s_{i,k}(\vc x^*;\vc h) + s_{i,k}(\vc x^*)\delta \alpha_k(\vc x^*;\vc h) = \lambda^{1-p_i'(p_k'-1)}\delta s_{i,k}(\vc x^*;\vc h) + \lambda^{p_i'(p_k'-1)} \vc x_k^*\ps{\grad \alpha_k(\vc x^*)}{\vc h}.
\end{equation*}
Let $C \coloneqq \lambda^{1-p_i'(p_k'-1)}\big(p_k'+(m-1)p_i'(1-p_k)+\rho\big)$, then
\begin{equation*}
\ps{|\vc x^*_k|^{p_k-2}\hap \vc g_k}{\vc x_k^*\ps{\grad \alpha_k(\vc x^*)}{\vc h}}= C \ps{\psi_{p_k}(\vc x^*_k)}{\vc h_k}\ps{\psi_{p_k}(\vc x_k^*)}{\vc g_k} =\ps{|\vc x^*_k|^{p_k-2}\hap \vc h_k}{\vc x_k^*\ps{\grad \alpha_k(\vc x^*)}{\vc g}}.
\end{equation*}
In order to conclude the proof, we show $
\ps{|\vc x^*_k|^{p_k-2}\hap \vc h_k}{\delta s_{i,k}(\vc x^*;\vc g)} = \ps{|\vc x^*_k|^{p_k-2}\hap \vc g_k}{\delta s_{i,k}(\vc x^*;\vc h)}.$ Another tedious computation shows $
 \delta s_{i,k}(\vc x; \vc h) = (p_k'-1)\big|s_{i,k}(\vc x)\big|^{2-p_k} \hap L_{i,k}(\vc x; \vc h)$ for all $\vc x \in U$ where
 \begin{equation*}
 \begin{array}{l}
L_{i,k}(\vc x,\vc h) \coloneqq \Sum_{l\in[m]\setminus\{k,i\}}\grad_{k} f\bigl \vc x_1,\ldots ,\vc x_{l-1}, \vc h_l, \vc x_{l+1} ,\ldots ,\vc x_{i-1}, \psi_{p_i'}\big(\grad_{i}f(\vc x)\big),\vc x_{i+1},\ldots, \vc x_m \bigr \\
 \ + (p_i'-1)\Sum_{s\in[m]\setminus\{k\}}\grad_{k} f\bigl\vc x_1,\ldots ,\vc x_{i-1}, \big|\grad_{i}f(\vc x)\big|^{p_i'-2}\hap\grad_{i} f(\vc x_1, \ldots, \vc x_{s-1}, \vc h_s, \vc x_{s+1},\ldots, \vc x_{m}),\vc x_{i+1},\ldots, \vc x_m\bigr.
 \end{array}
 \end{equation*}
 Note that for any $k,l\in\msi$ with $k \neq l$ holds
  \begin{eqnarray*}
  \ps{\grad_{k} f(\vc x_1,\ldots ,\vc x_{l-1}, \vc h_l, \vc x_{l+1} ,\ldots, \vc x_m)}{\vc g_k} &=& f(\vc x_1, \ldots,\vc x_{l-1},\vc h_l, \vc x_{l+1},\ldots,\vc x_{k-1}, \vc g_k, \vc x_{k+1}, \ldots, \vc x_m)\\
  &=& \ps{\grad_{l} f(\vc x_1,\ldots ,\vc x_{k-1}, \vc g_k, \vc x_{k+1} ,\ldots, \vc x_m)}{\vc h_l},
  \end{eqnarray*}
  furthermore for $s \in\msi$, we have
  \begin{equation*}
  \begin{array}{l}
  \ps{\grad_{k} f\bigl\vc x_1,\ldots ,\vc x_{i-1}, \big|\grad_{i}f(\vc x)\big|^{p_i'-2}\hap\grad_{i} f(\vc x_1, \ldots, \vc x_{s-1}, \vc h_s, \vc x_{s+1},\ldots, \vc x_{m}),\vc x_{i+1},\ldots, \vc x_m\bigr}{\vc g_k}\\
   \ = f\bigl\vc x_1,\ldots ,\vc x_{i-1}, \big|\grad_{i}f(\vc x)\big|^{p_i'-2}\hap\grad_{i} f(\vc x_1, \ldots, \vc x_{s-1}, \vc h_s, \vc x_{s+1},\ldots, \vc x_{m}),\vc x_{i+1},\ldots,\vc x_{k-1},\vc g_k, \vc x_{k+1}, \ldots, \vc x_m\bigr \\
  \ = \ps{\grad_{i}f(\vc x_1, \ldots, \vc x_{k-1}, \vc g_{k}, \vc x_{k+1},\ldots,\vc x_m)}{\big|\grad_{i}f(\vc x)\big|^{p_i'-2}\hap\grad_{i}f(\vc x_1, \ldots, \vc x_{s-1}, \vc h_{s}, \vc x_{s+1},\ldots,\vc x_m)} \\
  \ = \ps{\big|\grad_{i}f(\vc x)\big|^{p_i'-2}\hap\grad_{i}f(\vc x_1, \ldots, \vc x_{k-1}, \vc g_{k}, \vc x_{k+1},\ldots,\vc x_m)}{\grad_{i}f(\vc x_1, \ldots, \vc x_{s-1}, \vc h_{s}, \vc x_{s+1},\ldots,\vc x_m)} \\
  \ = \ps{\grad_{s} f\bigl \vc x_1,\ldots ,\vc x_{i-1}, \big|\grad_{i}f(\vc x)\big|^{p_i'-2}\hap\grad_{\vc x_i} f(\vc x_1, \ldots, \vc x_{k-1}, \vc g_k, \vc x_{k+1},\ldots, \vc x_{m}),\vc x_{i+1},\ldots, \vc x_m\bigr}{\vc h_s}.
  \end{array}
  \end{equation*}
  These relations imply $\ps{L_{i,k}(\vc x; \vc h)}{\vc g_k} = \ps{L_{i,k}(\vc x; \vc g)}{\vc h_k}$ for all $\vc x \in U$. Thus, for every $k\in\msi$, we have 
  \begin{equation*}
  \ps{|\vc x^*_k|^{p_k-2}\hap \vc g_k}{\delta s_{i,k}(\vc x^*;\vc h)}  =  (p_k'-1)\lambda^{p_i'(p_k'-2)}\ps{L_{i,k}(\vc x; \vc h)}{\vc g_k} 
  = \ps{|\vc x^*_k|^{p_k-2}\hap \vc h_k}{\delta s_{i,k}(\vc x^*;\vc g)}.\qedhere
  \end{equation*}
  \end{proof}
\begin{prop} \label{specrad}
Let $f\geq 0$ a weakly irreducible tensor, $1 < p_1,\ldots,p_m < \infty$ such that there is $i \in [m]$ with $(m-1)p_i' \leq p_k$ for every $k\in\msi$, $\vc x^*$ the unique strictly positive critical point of $Q_i$ in $\NS^{d-d_i}$. Moreover, let $DG(\vc x^*)$ be the Jacobian matrix of $G$ at $\vc x^*$, then the spectral radius of $DG(\vc x^*)$ is strictly smaller than $1$, i.e. $\rho\big(DG(\vc x^*)\big)<1$.
\end{prop}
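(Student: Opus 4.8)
The plan is to differentiate $G$ by writing it as $G=R\circ F_0$, where $R(\vc y)\coloneqq\left(\vc y_1/\norm{\vc y_1}_{p_1},\ldots,\vc y_m/\norm{\vc y_m}_{p_m}\right)$ is the blockwise normalization (smooth near $F_0(\vc x^*)$ by Corollary \ref{dual_s_pos}) and $F_0$ is the map from the construction of Lemma \ref{prop_DF} with the free exponent there taken to be $0$, that is $\left(F_0(\vc x)\right)_k\coloneqq\lambda^{1-p_i'(p_k'-1)}\norm{\vc x_k}_{p_k}^{p_k'+(m-1)p_i'(1-p_k')}\,s_{i,k}(\vc x)$ for $k\in\msi$, with $\lambda\coloneqq Q_i(\vc x^*)$. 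This choice is admissible precisely because of the hypothesis: $(m-1)p_i'\le p_k$ gives $p_k'+(m-1)p_i'(1-p_k')=\tfrac{p_k-(m-1)p_i'}{p_k-1}\ge 0$ for every $k\in\msi$, so $F_0$ is well defined and $C^1$ near $\vc x^*$ (using $\vc x^*>0$), and the arguments in the proof of Lemma \ref{prop_DF} — none of which used positivity of the exponent except to force the diagonal of the Jacobian to be strictly positive — show that $B_0\coloneqq DF_0(\vc x^*)$ is nonnegative, irreducible, and self-adjoint for the inner product $[\vc h,\vc g]_*\coloneqq\sum_{k\in\msi}\ps{|\vc x^*_k|^{p_k-2}\hap\vc h_k}{\vc g_k}$. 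For the diagonal I would specialise the formula of Lemma \ref{prop_DF} for $\partial_{l,j_l}s_{i,k,j_k}(\vc x^*)$ to $l=k$, $j_l=j_k$: the second summand vanishes since $\partial_{k,j_k}^2 f\equiv 0$, and the first is a strictly positive quadratic form evaluated at $\grad_i\partial_{k,j_k}f(\vc x^*)$, which is nonzero because otherwise the vertex $(k,j_k)$ would be isolated in $G(f)$, contradicting weak irreducibility; hence $B_0$ has strictly positive diagonal and is therefore primitive. The crucial new feature relative to Lemma \ref{prop_DF} is that $F_0$ is now positively $1$-homogeneous, so Euler's identity gives $B_0\vc x^*=F_0(\vc x^*)=\lambda\vc x^*$; since $\vc x^*>0$, this means the Perron root of $B_0$ is \emph{exactly} $\lambda$, with Perron eigenvector $\vc x^*$.

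I would then differentiate. The positive scalar prefactors of $F_0$ cancel under $R$, so indeed $R\circ F_0=G$; and from $\grad\norm{\vc y}_q=\norm{\vc y}_q^{1-q}\psi_q(\vc y)$ together with $F_0(\vc x^*)=\lambda\vc x^*$ one computes $DR(\lambda\vc x^*)=\tfrac1\lambda P$, where $P=(P_k)_{k\in\msi}$ and $P_k\vc h_k=\vc h_k-\ps{\psi_{p_k}(\vc x^*_k)}{\vc h_k}\,\vc x^*_k$. Because $\vc x^*>0$, $\ps{\psi_{p_k}(\vc x^*_k)}{\vc h_k}=\ps{|\vc x^*_k|^{p_k-2}\hap\vc x^*_k}{\vc h_k}$ (the block-$k$ term of $[\vc x^*,\vc h]_*$) and $\ps{\psi_{p_k}(\vc x^*_k)}{\vc x^*_k}=\norm{\vc x^*_k}_{p_k}^{p_k}=1$, so each $P_k$, hence $P$, is the $[\cdot,\cdot]_*$-orthogonal projection of $\sR^{d-d_i}$ onto the tangent space $T_{\vc x^*}\NS^{d-d_i}$; in particular $P\vc x^*=0$. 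By the chain rule $DG(\vc x^*)=\tfrac1\lambda\,PB_0$.

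To conclude, I would use Perron--Frobenius for the primitive, $[\cdot,\cdot]_*$-self-adjoint matrix $B_0$: decompose $B_0=\lambda\Pi+N$, with $\Pi$ the rank-one $[\cdot,\cdot]_*$-orthogonal projection onto $\mathrm{span}(\vc x^*)$; then $N$ is $[\cdot,\cdot]_*$-self-adjoint, $\Pi N=0$, and all eigenvalues of $N$ have modulus $<\lambda$, i.e.\ $\rho(N)<\lambda$. Since $P\vc x^*=0$ we have $P\Pi=0$, hence $PB_0=PN$, and therefore
\[
\rho\left(DG(\vc x^*)\right)=\frac{1}{\lambda}\,\rho(PN)\;\le\;\frac{1}{\lambda}\,\Norm{PN}_*\;\le\;\frac{1}{\lambda}\,\Norm{P}_*\,\Norm{N}_*\;\le\;\frac{1}{\lambda}\,\rho(N)\;<\;1,
\]
where $\Norm{\cdot}_*$ denotes the operator norm induced by $[\cdot,\cdot]_*$, and we used $\Norm{P}_*\le 1$ ($P$ an orthogonal projection) and $\Norm{N}_*=\rho(N)$ ($N$ self-adjoint).

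The main obstacle is the last strict inequality. The homogeneous surrogate $F$ actually furnished by Lemma \ref{prop_DF} has degree $1+\rho$ with a strictly positive exponent $\rho$, so its Jacobian has Perron root $(1+\rho)\lambda$ and the estimate above would only give spectral radius $<1+\rho$; the entire gain comes from being allowed to take the boundary exponent $0$, which lowers the Perron root to exactly $\lambda$ while keeping $B_0$ nonnegative — and, once the diagonal is checked, primitive — and this is precisely where the hypothesis $(m-1)p_i'\le p_k$ is used. (One could equivalently keep the exponent positive throughout and note at the end that $PB=PB_0$, since $B-B_0$ is a scalar multiple of $I-P$ and so does not affect $PB$; working with $F_0$ from the outset is just cleaner.) A secondary technical point is the strict positivity of the diagonal of $B_0$, which — the relevant exponent now possibly being $0$ — no longer follows as in Lemma \ref{prop_DF} but requires the computation with $\partial_{k,j_k}^2 f\equiv 0$ indicated above.
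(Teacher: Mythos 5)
Your proposal is correct in substance but follows a genuinely different route from the paper. The paper keeps the auxiliary map $F$ of Lemma \ref{prop_DF} with a strictly positive exponent $\rho$, sets $M\coloneqq\lambda_1 DG(\vc x^*)$ with $\lambda_1=(\rho+1)\lambda$, and identifies the spectrum of $M$ with $\{0,\lambda_2,\ldots,\lambda_{d-d_i}\}$ by pairing the weighted symmetry of $B$ with its eigenvectors, first under the assumption that the eigenvalues of $B$ are distinct and nonzero, the degenerate case being delegated to Corollary 5.2 of \cite{Fried}. You instead take the boundary exponent (admissible exactly because $(m-1)p_i'\le p_k$), so that the surrogate $F_0$ is positively $1$-homogeneous, its Jacobian $B_0$ has Perron root exactly $\lambda=Q_i(\vc x^*)$ with eigenvector $\vc x^*$, and $DG(\vc x^*)=\frac1\lambda PB_0$ with $P$ the $[\cdot,\cdot]_*$-orthogonal projection annihilating $\vc x^*$; the weighted self-adjointness of $B_0$ (the last assertion of Lemma \ref{prop_DF}, which indeed survives at the boundary exponent) then yields $\rho(PB_0)=\rho(PN)\le\norm{P}_*\norm{N}_*\le\rho(N)<\lambda$ with no genericity assumption on the spectrum and no case distinction. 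Your observation that $PB=PB_0$ because $B-B_0=\rho\lambda(I-P)$ makes the relation to the paper's $B$ transparent, and you correctly isolate where the hypothesis on the exponents is used, namely in making the Perron root coincide with the normalization constant $\lambda$.

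The one step that does not go through as you state it is the irreducibility of $B_0$. In the proof of Lemma \ref{prop_DF}, irreducibility of $B$ is deduced from two facts: the diagonal blocks of $B$ are entrywise strictly positive (which uses $p_k'+(m-1)p_i'(1-p_k')+\rho>0$, i.e. precisely the strict positivity of the exponent you have dropped), and between any two blocks there is at least one positive entry. With the boundary exponent the term $s_{i,k,j_k}(\vc x^*)\,\partial_{k,l_k}\alpha_k(\vc x^*)$ can vanish, so only the diagonal \emph{entries} of $B_0$ are guaranteed positive (by your $\partial^2_{k,j_k}f\equiv 0$ computation), not the diagonal blocks; and ``positive diagonal entries plus one edge between every pair of blocks'' does not imply strong connectivity of the vertex-level graph, so the Lemma's irreducibility argument does not transfer verbatim, contrary to your claim that only the diagonal needed attention. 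The statement is nevertheless true and can be repaired with the contraction argument already used in part (i) of the proof of Theorem \ref{weak_strictpos}: from the formula for $\partial_{l,j_l}s_{i,k,j_k}(\vc x^*)$, together with $\vc x^*>0$ and $\grad_if(\vc x^*)>0$, one sees that $(B_0)_{(k,j_k),(l,j_l)}>0$ whenever $\big((k,j_k),(l,j_l)\big)\in E(f)$ or both vertices have a common neighbour $(i,j_i)$ in $G(f)$; since $G(f)$ is $m$-partite (no two block-$i$ vertices are adjacent) and connected, contracting the block-$i$ vertices out of any path in $G(f)$ shows that the graph of $B_0$ is connected, hence $B_0$ is irreducible and, having positive diagonal, primitive. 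With this repair your proof is complete.
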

\begin{proof}
 Let $B$ as in Lemma \ref{prop_DF}, then $B$ is primitive and has Perron root $\lambda_1 = (\rho+1)\norm{f}_{p_1,\ldots,p_m}$. Let $d \coloneqq d_1 + \ldots + d_m$ and $\lambda_1,\ldots, \lambda_{d-d_i}$ be the eigenvalues of $B$. We may order them so that $
\lambda_1 > |\lambda_2 |\geq |\lambda_3 |\geq  \ldots \geq |\lambda_{d-d_i}|.$
We claim that $|\lambda_2|$ is the spectral radius $\rho(M)$ of the matrix $M\coloneqq \lambda_1 DG(\vc x^*)$. Let $U\subset\sR^{d-d_i}_{++}$ an open neighborhood of $\vc x^*$ such that $F$ and $G$ are smooth in $U$. Observe that Corollary \ref{dual_s_pos} implies $F_k(\vc x)>0$ for every $\vc x \in U$ and each $k\in\msi$. Moreover, $G(\vc x) = \left(\frac{F_1(\vc x)}{\norm{F_1(\vc x)}_{p_1}},\ldots,\frac{F_m(\vc x)}{\norm{F_m(\vc x)}_{p_m}}\right)$ for every $\vc x \in U$ and thus, in particular, $G(\vc x^*)=\vc x^*$. A straightforward computation shows that 
$(M \vc h)_l = (B \vc h)_l - \vc x_l^*\ps{\psi_{p_l}(\vc x_l^*)}{(B\vc h)_l}$ for every $l\in\msi$ and every $\vc h\in \sR^{d-d_i}$. It follows that $
(M \vc x^*)_l = (B \vc x^*)_l -\vc x_l^*\ps{\psi_{p_l}(\vc x_l^*)}{(B\vc x^*)_l}= \lambda_1 \vc x^*_l -\lambda_1 \vc x_l^*\ps{\psi_{p_l}(\vc x_l^*)}{\vc x^*_l}= 0,$ i.e. $\vc x^*$ is an eigenvector of $M$ associated to the eigenvalue $0$. First, suppose that the eigenvalues of $B$ are all distinct and different from $0$. Now, from Proposition \ref{prop_DF}, we know that for every $\vc h, \vc g$ holds $\ps{|\vc x^*_k|^{p_k-2}\hap \vc h_k}{(B\vc g)_k} = \ps{|\vc x^*_k|^{p_k-2}\hap \vc g_k}{(B\vc h)_k}.$ For $s \in [d], s \geq 2$ let us denote by $\vc u^s$ the eigenvector of $B$ such that $B\vc u^s = \lambda_s \vc u^s$, then 
\begin{equation*}\lambda_s\ps{\psi_{p_k}(\vc x^*_k)}{\vc u^s_k} = \ps{|\vc x^*_k|^{p_k-2}\hap \vc x_k^*}{(B\vc u^s)_k}= \ps{|\vc x^*_k|^{p_k-2}\hap \vc u_k^s}{(B\vc x^*)_k} =\lambda_1 \ps{\psi_{p_k}(\vc x_k^*)}{\vc u_k^s}.\end{equation*}
From $\lambda_s \neq 0$ and $\lambda_1 \neq \lambda_s$ follows $\ps{\psi_{p_k}(\vc x_k^*)}{\vc u_k^s}=0$. This shows that for every $k \in\msi$ we have
$(M\vc u^s)_{k} = \lambda_s\vc u^s_{k} - \vc x^* \lambda_s\ps{\psi_{p_k}(\vc x_k^*)}{\vc u_k^s} = \lambda_s \vc u^s_{k},
$ i.e. $\lambda_s$ is an eigenvalue of $M$ associated to the eigenvector $\vc u^s$. Thus, the eigenvalues of $M$ are exactly $0,\lambda_2,\ldots,\lambda_{d-d_i}$ and we have $\rho(M) = |\lambda_2|$. It follows that $\rho\big(DG(\vc x^*)\big)=\frac{|\lambda_2|}{\lambda_1}<1$. The case when $B$ does not satisfy the above assumption is treated as in Corollary 5.2, \cite{Fried}.
\end{proof}
\begin{cor}\label{Neighborhood}
Let $f,p_1,\ldots,p_m,\vc x^*$ and $G$ as in Proposition \ref{specrad}. Then, there exists a norm $\norm{\cdot}_G$ on $\sR^{d-d_i}$, $r_0>0$ and $0< \nu <1$ such that for every $\vc y^0\in \sR^{d-d_i}$ with $\norm{\vc y^0-\vc x^*}_G<r_0$ we have 
\begin{equation}\label{lin_conv}
\norm{\vc y^{k+1}-\vc x^*}_G \leq \nu \norm{\vc y^{k}-\vc x^*}_G \qquad \forall k\in\N,
\end{equation}
where $\vc y^{k+1}\coloneqq G(\vc y^k)$ for every $k\in\N$.
\end{cor}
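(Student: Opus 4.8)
The plan is to deduce this corollary from Proposition \ref{specrad} via the classical local-attractivity argument for fixed-point iterations. The point $\vc x^*$ is a fixed point of $G$, and since its Jacobian $DG(\vc x^*)$ has spectral radius $\rad\big(DG(\vc x^*)\big)<1$, the iteration $\vc y^{k+1}=G(\vc y^k)$ contracts towards $\vc x^*$ in a norm adapted to $DG(\vc x^*)$ as soon as it enters a small enough ball.

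First I would fix $\nu$ with $\rad\big(DG(\vc x^*)\big)<\nu<1$ and pick an auxiliary $\tilde\nu\in\big(\rad(DG(\vc x^*)),\nu\big)$. By the classical fact that the spectral radius of a matrix is the infimum of its induced operator norms (see e.g. \cite{Horn}), there is a vector norm $\norm{\cdot}_G$ on $\sR^{d-d_i}$ such that the associated operator norm, which I also write $\norm{\cdot}_G$, satisfies $\norm{DG(\vc x^*)}_G\le\tilde\nu$. Next I would record that the map $G$ (given by the same formula) is continuously differentiable on some open neighbourhood $U\subset\sR^{d-d_i}_{++}$ of $\vc x^*$ with $G(\vc x^*)=\vc x^*$: this is already implicit in Proposition \ref{specrad}, and concretely follows because $f$ and $\grad_if$ are smooth, weak irreducibility together with Proposition \ref{Characterizer_prop}, \ref{magiclema}) gives $\partial_{k,j_k}f(\vc x)>0$ for $\vc x>0$, Corollary \ref{dual_s_pos} gives $s_{i,k}(\vc x)>0$ on $U$, and $\psi_q$ is smooth away from the origin, so each coordinate of $G$ is a smooth composition near $\vc x^*$, while $G(\vc x^*)=\vc x^*$ follows from $s_{i,k}(\vc x^*)=\norm{f}_{p_1,\ldots,p_m}^{p_i'(p_k'-1)}\vc x^*_k$ and $\norm{\vc x^*_k}_{p_k}=1$. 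Taylor's theorem at $\vc x^*$ then gives, for $\vc y\in U$,
\[
G(\vc y)-\vc x^*=DG(\vc x^*)(\vc y-\vc x^*)+R(\vc y),\qquad \norm{R(\vc y)}_G=o\big(\norm{\vc y-\vc x^*}_G\big)\text{ as }\vc y\to\vc x^*.
\]

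From here the argument is routine. I would pick $r_0>0$ small enough that the closed ball $\overline{B}\coloneqq\{\vc y\in\sR^{d-d_i}\mid\norm{\vc y-\vc x^*}_G\le r_0\}$ lies in $U$ and $\norm{R(\vc y)}_G\le(\nu-\tilde\nu)\norm{\vc y-\vc x^*}_G$ for all $\vc y\in\overline{B}$. Then for every $\vc y\in\overline{B}$,
\[
\norm{G(\vc y)-\vc x^*}_G\le\norm{DG(\vc x^*)}_G\,\norm{\vc y-\vc x^*}_G+\norm{R(\vc y)}_G\le\big(\tilde\nu+(\nu-\tilde\nu)\big)\norm{\vc y-\vc x^*}_G=\nu\,\norm{\vc y-\vc x^*}_G,
\]
so in particular $G(\overline{B})\subset\overline{B}$. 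Consequently, if $\norm{\vc y^0-\vc x^*}_G<r_0$, a trivial induction on $k$ shows that $\vc y^k\in\overline{B}$ for all $k$ and that \eqref{lin_conv} holds. The only delicate ingredient — and it is entirely classical — is the existence of the adapted norm $\norm{\cdot}_G$ whose operator norm is arbitrarily close to $\rad\big(DG(\vc x^*)\big)$; together with the (already established) differentiability of $G$ at $\vc x^*$, this makes the corollary an immediate consequence of Proposition \ref{specrad}.
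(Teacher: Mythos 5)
Your proposal is correct and follows essentially the same route as the paper: choose $\nu$ and an auxiliary constant with $\rho\big(DG(\vc x^*)\big)<\tilde\nu<\nu<1$, invoke the classical adapted-norm result from \cite{Horn} to get $\norm{DG(\vc x^*)\vc v}_G\leq\tilde\nu\norm{\vc v}_G$, use differentiability of $G$ at the fixed point $\vc x^*$ to absorb the remainder into $(\nu-\tilde\nu)\norm{\vc y-\vc x^*}_G$ on a small ball, and conclude by induction using the invariance of that ball. The only cosmetic difference is that you work with a closed ball and state its invariance explicitly, whereas the paper argues recursively on the open ball; the content is identical.
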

\begin{proof}
Let $U$ be an open neighborhood of $\vc x^*$ such that $G$ is differentiable on $U$ and $\vc u>0$ for every $\vc u\in U$. By Proposition \ref{specrad} we know that $\rho\big(DG(\vc x^*)\big) <1$, so let $L,\nu>0$ such that $\rho\big(DG(\vc x^*)\big) <L<\nu<1$. It is a classical result (see Lemma 5.6.10, \cite{Horn}) that there exists a norm $\norm{\cdot}_G$ on $\sR^{d-d_i}$ such that for every $\vc v\in\sR^{d-d_i}$ holds $\norm{DG(\vc x^*)\vc v}_G \leq L\norm{\vc v}_G$. Since $G$ is differentiable at $\vc x^*$, we have $G(\vc u) = G(\vc x^*) + DG(\vc x^*)(\vc u-\vc x^*) + R(\vc u,\vc x^*),$ with $\lim_{\vc u \to \vc x^*}\frac{\norm{R(\vc u,\vc x^*)}_G}{\norm{\vc u,\vc x^*}_G}=0$. Let $\epsilon > 0$ be such that $\epsilon \leq \nu -L$, then there is $r_0 > 0$ such that for every $\vc u\in \NS^{d-d_i}_{++}$ with $\norm{\vc u-\vc x^*}_G<r_0$ we have
$\norm{R\big(\vc u,\vc x^*\big)}_G \leq \epsilon\norm{\vc u-\vc x^*}_G$. By decreasing $r_0$ if necessary, we may suppose that  $\norm{\vc u-\vc x^*}_G<r_0$ implies $\vc u\in U$. It follows that for every $\vc y^0\in \NS^{d-d_i}$ with $\norm{\vc y^0-\vc x^*}_G<r_0$, it holds
\begin{align*}
\norm{\vc y^{1}-\vc x^*}_G &= \norm{G(\vc y^0)-G(\vc x^*)}_G  = \norm{DG(\vc x^*)(\vc y^0-\vc x^*) +R(\vc u,\vc x^*)}_G \\
& \leq  \norm{DG(\vc x^*)(\vc y^0-\vc x^*)}_G+ \norm{R(\vc u,\vc x^*)}_G \leq (L+\epsilon)\norm{\vc y^0-\vc x^*}_G\leq \nu\norm{\vc y^0-\vc x^*}_G.
\end{align*}
Since $\nu <1,$ we have $
\norm{\vc y^{1}-\vc x^*}_G \leq \nu\norm{\vc y^{0}-\vc x^*}_G <\norm{\vc y^0-\vc x^*}_G<r_0$. Thus we may
apply recursively the above argument to get $\norm{\vc y^{k+1}-\vc x^*}_G \leq \nu \norm{\vc y^{k}-\vc x^*}_G$ for every $k\in\N$.
\end{proof}
It follows directly that if $\vc x^0$ is close enough to the strictly positive singular vector $\vc x^*$ of $f$ (i.e. $\norm{\vc x^0-\vc x^*}_G < r_0$ with the notations of Corollary \ref{Neighborhood}), the sequence $(\vc x^k)_{k\in\N}$ produced by \algo{} has $\vc x^*$ as its limit and the convergence rate is linear. However, this result is of little practical use if we don't know $\vc x^*$. This is why we show now that for every $\vc x^0\in\NS^{d-d_i}_{++}$, the sequence $(\vc x^k)_{k\in\N}$ converges to $\vc x^*$. 
In order to do it, we use a Lemma proved by R. Nussbaum \cite{Nussb}.
\begin{lem}[Lemma 2.3, \cite{Nussb}]\label{golden_lemma}
Let $(S,\mu_S)$ be a metric space with metric $\mu_S$ and suppose that $S_0$ is a  connected subset of $S$ and $\mu$ is a metric on $S_0$ which gives the same topology on $S_0$ as that inherited from $S$. Let $T: S_0\to S_0$ be a map such that $\mu\big(T(\vc x),T(\vc y)\big)\leq \mu(\vc x,\vc y)$ for every $\vc x,\vc y\in S_0$. For $n\in\N$ and $\vc x \in S_0$, let $T^{n}(\vc x)\coloneqq T^{n-1}\big(T(\vc x)\big)$. Assume that there exists $\vc x^*\in S_0$ and an open neighborhood $U$ of $\vc x^*$ such that $U\cap S_0\neq\emptyset$ and $\lim_{n\to\infty}\mu_S\big(T^{n}(\vc x),\vc x^*\big)=0$ for every $\vc x\in U\cap S_0.$ Finally, if there exists a continuous map $\varphi:\{t\in\R\mid t\geq 0\} \to \{t\in\R\mid t \geq 0\}$ with $\varphi(0)=0$ such that $\mu_S(\vc x,\vc y)\leq \varphi\big(\mu(\vc x, \vc y)\big)$ for all $\vc x,\vc y\in U\cap S_0$. 
Then $\lim_{n\to \infty}\mu_S\big(T^{n}(\vc x),\vc x^*\big)=0$ for every $\vc x\in S_0$.
\end{lem}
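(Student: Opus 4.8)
The plan is to prove the statement by a connectedness argument applied to the set
$\Sigma := \{\vc x \in S_0 \mid \lim_{n\to\infty}\mu_S(T^n(\vc x),\vc x^*) = 0\}$: I would show that $\Sigma$ is nonempty, open in $S_0$ and closed in $S_0$, and then conclude $\Sigma = S_0$ from the connectedness of $S_0$, which is exactly the claim. Nonemptiness is immediate, since by hypothesis $U\cap S_0\subseteq\Sigma$ and $U\cap S_0\neq\emptyset$. Throughout I would freely use that $\mu$ and $\mu_S$ induce the same topology on $S_0$ — so that ``open/closed in $S_0$'', convergence of sequences inside $S_0$, and continuity of maps $S_0\to S_0$ may be checked with either metric — together with the facts that $U\cap S_0$ is open in $S_0$ (hence contains a $\mu$-ball $B_\mu(\vc x^*,\rho)\subseteq U$ for some $\rho>0$), that $T$ is $\mu$-nonexpansive and therefore continuous, and that every iterate $T^n$ maps $S_0$ into $S_0$ and is continuous.

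For openness, given $\vc x\in\Sigma$ I would pick $N$ with $T^N(\vc x)\in U\cap S_0$, which is possible because $T^n(\vc x)\to\vc x^*$ and $U$ is a neighbourhood of $\vc x^*$. Then $(T^N)^{-1}(U\cap S_0)$ is an open subset of $S_0$ containing $\vc x$, and for any $\vc y$ in it one has $T^N(\vc y)\in U\cap S_0$, so by hypothesis $\mu_S(T^{N+n}(\vc y),\vc x^*)=\mu_S\big(T^n(T^N(\vc y)),\vc x^*\big)\to 0$; reindexing shows $\vc y\in\Sigma$. Hence $\Sigma$ is open in $S_0$.

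For closedness, which is where the comparison $\mu_S\le\varphi(\mu)$ on $U\cap S_0$ is essential, let $\vc x\in S_0$ lie in the closure of $\Sigma$ in $S_0$ and fix $\epsilon>0$. Since $\varphi$ is continuous with $\varphi(0)=0$, I would first replace it by the monotone majorant $\tilde\varphi(t):=\max_{0\le s\le t}\varphi(s)$, which is still continuous with $\tilde\varphi(0)=0$ and satisfies $\mu_S\le\tilde\varphi(\mu)$ on $U\cap S_0$, and then choose $\delta\in(0,\rho/2)$ with $\tilde\varphi(\delta)<\epsilon/2$. Pick $\vc z\in\Sigma$ with $\mu(\vc x,\vc z)<\delta$. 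Nonexpansiveness gives $\mu(T^n(\vc x),T^n(\vc z))\le\mu(\vc x,\vc z)<\delta$ for all $n$, and since $\vc z\in\Sigma$ there is $N$ with $\mu(T^n(\vc z),\vc x^*)<\rho/2$ for $n\ge N$. For such $n$ the triangle inequality yields $\mu(T^n(\vc x),\vc x^*)<\rho$, so both $T^n(\vc x)$ and $T^n(\vc z)$ lie in $B_\mu(\vc x^*,\rho)\subseteq U\cap S_0$, and applying the comparison gives $\mu_S(T^n(\vc x),T^n(\vc z))\le\tilde\varphi\big(\mu(T^n(\vc x),T^n(\vc z))\big)\le\tilde\varphi(\delta)<\epsilon/2$. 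Finally, using $\vc z\in\Sigma$ once more, choose $N'\ge N$ with $\mu_S(T^n(\vc z),\vc x^*)<\epsilon/2$ for $n\ge N'$; then $\mu_S(T^n(\vc x),\vc x^*)<\epsilon$ for all $n\ge N'$, and as $\epsilon>0$ was arbitrary, $\vc x\in\Sigma$. Thus $\Sigma$ is closed in $S_0$, and connectedness forces $\Sigma=S_0$.

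The delicate point — and the reason the hypotheses are shaped the way they are — is precisely this closedness step: one cannot transfer the $\mu_S$-convergence of $T^n(\vc z)$ to $T^n(\vc x)$ directly, because the estimate $\mu_S\le\varphi(\mu)$ is only available on $U\cap S_0$. The way around it is to argue first in the metric $\mu$, in which nonexpansiveness keeps the two orbits uniformly close for all $n$, so as to guarantee that both orbits eventually enter $U$, and only then to translate back to $\mu_S$ via $\varphi$. Passing to the monotone majorant $\tilde\varphi$ is a minor technicality, needed only because $\varphi$ is not assumed monotone.
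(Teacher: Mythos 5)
Your argument is correct. Note, however, that the paper does not prove this statement at all: it is quoted verbatim as Lemma~2.3 of Nussbaum's paper \cite{Nussb} and used as a black box, so there is no internal proof to compare against. Your connectedness argument (show that $\Sigma=\{\vc x\in S_0 \mid \mu_S(T^n(\vc x),\vc x^*)\to 0\}$ is nonempty, open and closed in $S_0$) is a clean, self-contained substitute, and the delicate points are handled properly: openness only needs continuity of $T^N$ in the common topology of $S_0$ together with the fact that $U\cap S_0$ is open in that topology; closedness correctly keeps the two orbits $\mu$-close via nonexpansiveness, uses the topological equivalence of $\mu$ and $\mu_S$ on $S_0$ to convert $\mu_S$-convergence of $T^n(\vc z)$ into $\mu$-convergence so that both orbits eventually lie in the $\mu$-ball $B_\mu(\vc x^*,\rho)\subseteq U\cap S_0$, and only then invokes the comparison $\mu_S\leq\varphi(\mu)$; and the passage to the monotone majorant $\tilde\varphi(t)=\max_{0\leq s\leq t}\varphi(s)$ (continuous, vanishing at $0$, dominating $\varphi$) is exactly what is needed to turn the bound $\mu\big(T^n(\vc x),T^n(\vc z)\big)<\delta$ into $\mu_S\big(T^n(\vc x),T^n(\vc z)\big)<\epsilon/2$ without any monotonicity assumption on $\varphi$. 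In short: the proof is valid; it simply cannot be matched against the paper, only against Nussbaum's original argument, which it closely parallels in spirit.
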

We will apply Lemma \ref{golden_lemma} with $S_0 = \NS_{++}^{d-d_i},S=\sR^{d-d_i}, \mu_S(\vc x,\vc y)\coloneqq \norm{\vc x-\vc y}_G$, $T = G$ and $U = \big\{\vc x \in \sR^{d-d_i}\mid \norm{\vc x-\vc x^*}_G < r_0\big\}$ where $\norm{\cdot}_G$ and $r_0>0$ are such that Equation \eqref{lin_conv} is satisfied. Now, we build the metric $\mu$ on $S_0$ such that $\mu\big(G(\vc x),G(\vc y)\big)\leq \mu(\vc x,\vc y)$ for $\vc x,\vc y\in\NS_{++}^{d-d_i}$ and prove that the topology on $\big(\NS_{++}^{d-d_i},\mu\big)$ is the same as that inherited from $\big(\sR^{d-d_i},\mu_S\big)$.
\begin{prop}\label{metric_space}
Let $1<p_1,\ldots,p_m<\infty,i\in [m]$ and $\mu \colon \NS_{++}^{d-d_i} \times \NS_{++}^{d-d_i} \to \R$ defined by 
\begin{equation*}
\mu(\vc x,\vc y)\coloneqq \ln\left(\prod_{l\in\msi}\dfrac{\displaystyle\max_{j_l\in [d_l]}\left(\frac{x_{l,j_l}}{y_{l,j_l}}\right)^{p_l-1}}{\displaystyle\min_{j_l\in [d_l]}\left(\frac{x_{l,j_l}}{y_{l,j_l}}\right)^{p_l-1}}\right),
\end{equation*}
then $\big(\NS_{++}^{d-d_i},\mu\big)$ is a metric space.
\end{prop}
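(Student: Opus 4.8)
The plan is to view $\mu$ as a finite sum of Hilbert-type projective pseudometrics, one for each index $l\in\msi$, each obtained by pulling back the standard projective pseudometric on $\R^{d_l}_{++}$ through the coordinatewise power map $t\mapsto t^{p_l-1}$, and then to use the normalization $\norm{\vc x_l}_{p_l}=1$ to turn the resulting pseudometric into an honest metric. Concretely, for $\vc x,\vc y\in\NS^{d-d_i}_{++}$ and $l\in\msi$ set
\begin{equation*}
\omega_l(\vc x,\vc y)\coloneqq\ln\left(\frac{\max_{j_l\in[d_l]}\left(x_{l,j_l}/y_{l,j_l}\right)^{p_l-1}}{\min_{j_l\in[d_l]}\left(x_{l,j_l}/y_{l,j_l}\right)^{p_l-1}}\right),
\end{equation*}
so that $\mu=\sum_{l\in\msi}\omega_l$, and it is enough to check that each $\omega_l$ is nonnegative, symmetric and subadditive along a third point, and that $\mu$ separates points.

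Nonnegativity of $\omega_l$ is immediate since all the quantities $\left(x_{l,j_l}/y_{l,j_l}\right)^{p_l-1}$ are strictly positive, so their maximum is at least their minimum and the argument of $\ln$ is $\geq 1$. For symmetry I would use the identity $\max_{j_l}\left(x_{l,j_l}/y_{l,j_l}\right)^{p_l-1}=\big(\min_{j_l}\left(y_{l,j_l}/x_{l,j_l}\right)^{p_l-1}\big)^{-1}$, which shows that swapping $\vc x$ and $\vc y$ leaves the ratio inside $\omega_l$ unchanged. For the triangle inequality, given a third point $\vc z\in\NS^{d-d_i}_{++}$, put $a_{j_l}\coloneqq x_{l,j_l}/z_{l,j_l}$ and $b_{j_l}\coloneqq z_{l,j_l}/y_{l,j_l}$, so $x_{l,j_l}/y_{l,j_l}=a_{j_l}b_{j_l}$; then $\max_{j_l}a_{j_l}b_{j_l}\leq(\max_{j_l}a_{j_l})(\max_{j_l}b_{j_l})$ and $\min_{j_l}a_{j_l}b_{j_l}\geq(\min_{j_l}a_{j_l})(\min_{j_l}b_{j_l})$. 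Since $p_l-1>0$, the map $t\mapsto t^{p_l-1}$ is increasing and preserves these inequalities, so dividing the max-bound by the min-bound and taking logarithms gives $\omega_l(\vc x,\vc y)\leq\omega_l(\vc x,\vc z)+\omega_l(\vc z,\vc y)$; summing over $l\in\msi$ yields $\mu(\vc x,\vc y)\leq\mu(\vc x,\vc z)+\mu(\vc z,\vc y)$.

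Finally I would prove $\mu(\vc x,\vc y)=0\iff\vc x=\vc y$, which is the only place the sphere normalization enters. Clearly $\mu(\vc x,\vc x)=0$. Conversely, since each $\omega_l\geq 0$, vanishing of $\mu$ forces $\omega_l(\vc x,\vc y)=0$ for every $l\in\msi$, i.e.\ $\max_{j_l}\left(x_{l,j_l}/y_{l,j_l}\right)^{p_l-1}=\min_{j_l}\left(x_{l,j_l}/y_{l,j_l}\right)^{p_l-1}$; hence $x_{l,j_l}/y_{l,j_l}$ equals a constant $c_l>0$ independent of $j_l$, so $\vc x_l=c_l\vc y_l$. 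Taking $p_l$-norms and using $\norm{\vc x_l}_{p_l}=\norm{\vc y_l}_{p_l}=1$ gives $c_l=1$, whence $\vc x_l=\vc y_l$ for all $l\in\msi$, i.e.\ $\vc x=\vc y$. Finiteness of $\mu$ on $\NS^{d-d_i}_{++}\times\NS^{d-d_i}_{++}$ is automatic because all coordinates are strictly positive. I do not anticipate a genuine obstacle here; the only point to keep an eye on is that $p_l-1>0$ is what makes the power map order-preserving, so every monotonicity step above is licit precisely because $1<p_l<\infty$.
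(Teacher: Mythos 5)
Your proof is correct, and it rests on the same decomposition the paper uses, namely writing $\mu(\vc x,\vc y)=\sum_{l\in\msi}\mu_l(\vc x_l,\vc y_l)$ with one summand per factor sphere $\NS^{d_l}_{++}$; the difference is in how the per-factor metric property is established. The paper simply cites Theorem 1.2 of Nussbaum to assert that each $\big(\NS^{d_l}_{++},\mu_l\big)$ is a complete metric space (each $\mu_l$ being, up to the constant factor $p_l-1$, the Hilbert projective metric on the cone $\R^{d_l}_{++}$ restricted to the $\ell^{p_l}$ unit sphere as a set of representatives of rays) and then concludes via the product-metric construction. You instead verify the axioms directly: nonnegativity since the max dominates the min, symmetry from $\max_{j_l}(x_{l,j_l}/y_{l,j_l})^{p_l-1}=\big(\min_{j_l}(y_{l,j_l}/x_{l,j_l})^{p_l-1}\big)^{-1}$, the triangle inequality from the factorization $x_{l,j_l}/y_{l,j_l}=(x_{l,j_l}/z_{l,j_l})(z_{l,j_l}/y_{l,j_l})$ together with sub/supermultiplicativity of max/min and monotonicity of $t\mapsto t^{p_l-1}$, and definiteness by observing that vanishing of each summand forces $\vc x_l=c_l\vc y_l$ and that the normalization $\norm{\vc x_l}_{p_l}=\norm{\vc y_l}_{p_l}=1$ gives $c_l=1$ --- correctly identifying this as the one place the sphere constraint is essential. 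Your elementary argument is self-contained and fully sufficient for the proposition as stated; what the paper's citation buys in addition is completeness of each factor (and hence of the product), which your verification does not provide, but completeness is neither claimed in Proposition \ref{metric_space} nor needed later, since the convergence proof only uses the metric $\mu$, its non-expansiveness under $G$, and its local topological equivalence with $\norm{\cdot}_G$ from Proposition \ref{loc_equiv}.
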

\begin{proof}
For $l\in [m],$ let $\NS^{d_l}_{++}\coloneqq\big\{\vc x_l\in\R^{d_l}_{++}\mid \norm{\vc x_l}_{p_l}=1\big\}$ and $\mu_l,\mu_l^-,\mu_l^+\colon \NS^{d_l}_{++}\times\NS^{d_l}_{++}\to \R$ with
\begin{equation}\label{def_mu_l}
\mu_l^-(\vc x_l, \vc y_l) \coloneqq \min_{j_l\in[d_l]} \left(\frac{x_{l,j_l}}{y_{l,j_l}}\right)^{p_l-1},\mu_l^+(\vc x_l, \vc y_l) \coloneqq \max_{j_l\in[d_l]} \left(\frac{x_{l,j_l}}{y_{l,j_l}}\right)^{p_l-1} \text{ and }\ \mu_l(\vc x_l,\vc y_l) \coloneqq \ln\left(\frac{\mu_l^+(\vc x_l,\vc y_l)}{\mu_l^{-}(\vc x_l,\vc y_l)}\right).
\end{equation}
It follows from Theorem 1.2 in \cite{Nussb} that $(\NS^{d_l}_{++},\mu_l)$ is a complete metric space. Note that $\NS^{d-d_i}_{++}= \NS^{d_1}_{++}\times\ldots\times\NS^{d_{i-1}}_{++}\times\NS^{d_{i+1}}_{++}\times\ldots\times\NS^{d_m}_{++}$ and $\mu(\vc x, \vc y)=\sum_{l\in\msi} \mu_l(\vc x_l,\vc y_l)$, i.e. $\big(\NS_{++}^{d-d_i},\mu\big)$ is the product metric space of  $\big(\NS_{++}^{d_1},\mu_1\big),\ldots,\big(\NS_{++}^{d_{i-1}},\mu_{i-1}\big),\big(\NS_{++}^{d_{i+1}},\mu_{i+1}\big),\ldots,\big(\NS_{++}^{d_m},\mu_m\big)$ and thus is a metric space itself.
\end{proof}
\begin{prop}\label{loc_equiv}
Let $f\geq 0$ be a weakly irreducible and $1<p_1,\ldots,p_m<\infty$ are such that there is $i\in [m]$ with $(m-1)p_i'\leq p_k$ for all $k\in [m]\setminus\{i\}$ and, if $m=2$, choose $i$ such that $p_i \leq p_k$ for $k\in [2]\setminus\{i\}$. Let $\norm{\cdot}_G$ be a norm such that Equation \eqref{lin_conv} is satisfied and $\mu$ the metric of Proposition \ref{metric_space}. Then, for every $\vc u \in \NS^{d-d_i}_{++}$, there exist $r>0$ and $c,C>0$ such that
\begin{equation*}
c\norm{\vc u-\vc x}_G\leq \mu(\vc u,\vc x)\leq C\norm{\vc u-\vc x}_G \qquad \forall \vc x \in \big\{\vc x \in \sR^{d-d_i}\mid \norm{\vc x-\vc u}_G < r\big\}\cap \NS^{d-d_i}_{++}.
\end{equation*}
\end{prop}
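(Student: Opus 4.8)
The plan is to reduce the statement to a one-factor estimate and then carry out a first-order Taylor analysis. Since all norms on $\sR^{d-d_i}$ are equivalent and, as established in the proof of Proposition \ref{metric_space}, $\mu(\vc x,\vc y)=\sum_{l\in\msi}\mu_l(\vc x_l,\vc y_l)$ with $\mu_l,\mu_l^-,\mu_l^+$ as in \eqref{def_mu_l}, it suffices to prove: for each $l\in\msi$ and each fixed $\vc u_l\in\R^{d_l}_{++}$ with $\norm{\vc u_l}_{p_l}=1$ there exist $r_l,c_l,C_l>0$ such that
\begin{equation*}
c_l\norm{\vc u_l-\vc x_l}_{p_l}\ \le\ \mu_l(\vc u_l,\vc x_l)\ \le\ C_l\norm{\vc u_l-\vc x_l}_{p_l}
\end{equation*}
for every $\vc x_l\in\R^{d_l}_{++}$ with $\norm{\vc x_l}_{p_l}=1$ and $\norm{\vc x_l-\vc u_l}_{p_l}<r_l$. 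Summing these bounds over $l\in\msi$ and using the equivalence of $\norm{\cdot}_G$ with the norm $\vc x\mapsto\sum_{l\in\msi}\norm{\vc x_l}_{p_l}$ then produces $r,c,C>0$, depending on $\vc u$, as required.

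For the one-factor estimate, fix $l$ and abbreviate $p:=p_l$, $\vc u:=\vc u_l$, $d:=d_l$. Since $t\mapsto t^{p-1}$ is increasing on $\R_{++}$, for an admissible $\vc x:=\vc x_l$ one has $\mu_l(\vc u,\vc x)=(p-1)\,\omega(\vc x)$, where $\omega(\vc x):=\max_{j}\ln(x_{j}/u_{j})-\min_{j}\ln(x_{j}/u_{j})\ge 0$. Put $\vc h:=\vc x-\vc u$. Expanding $1=\norm{\vc x}_p^{p}=\sum_j(u_j+h_j)^p$ and using $\norm{\vc u}_p^p=1$ together with the fact that $t\mapsto t^p$ is $C^2$ near each $u_j>0$ gives $\ps{\psi_{p}(\vc u)}{\vc h}=O(\norm{\vc h}_p^{2})$, so $\vc h$ decomposes as $\vc h=\vc v+\vc e$ with $\vc v$ in the tangent hyperplane $T:=\{\vc w\mid\ps{\psi_{p}(\vc u)}{\vc w}=0\}$ and $\norm{\vc e}_p=O(\norm{\vc h}_p^{2})$; in particular $\norm{\vc h}_p$ and $\norm{\vc v}_p$ are comparable once $\norm{\vc h}_p$ is small. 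Since $\vc u>0$ is fixed, $\ln(x_j/u_j)=\ln(1+h_j/u_j)=v_j/u_j+O(\norm{\vc v}_p^{2})$, whence $\omega(\vc x)=N(\vc v)+O(\norm{\vc v}_p^{2})$, where $N(\vc w):=\max_j w_j/u_j-\min_j w_j/u_j$ is a seminorm on $\R^{d}$.

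The heart of the argument is that $N$ restricts to a genuine norm on $T$: if $N(\vc w)=0$ then $w_j/u_j$ is independent of $j$, i.e. $\vc w=t\vc u$, and $\vc w\in T$ forces $t\norm{\vc u}_p^{p}=t=0$. This is exactly where the normalization $\norm{\vc x_l}_{p_l}=1$ is used — on $\R^{d}_{++}$ alone, $\mu_l$ is only a pseudometric, being Hilbert's projective metric. By equivalence of norms on the finite-dimensional space $T$ there are $0<c_0\le C_0$ with $c_0\norm{\vc v}_p\le N(\vc v)\le C_0\norm{\vc v}_p$ for $\vc v\in T$. Substituting into $\omega(\vc x)=N(\vc v)+O(\norm{\vc v}_p^{2})$, shrinking the neighborhood so that the remainder is at most $\frac{c_0}{2}\norm{\vc v}_p$ in absolute value, then replacing $\norm{\vc v}_p$ by the comparable quantity $\norm{\vc x-\vc u}_p$ and multiplying by $(p-1)$, yields the claimed two-sided bound on $\mu_l$, and hence the proposition after the reduction above.

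Beyond the routine expansion bookkeeping, the only genuine point is the positive-definiteness of $N$ on $T$ just described; the control of the $O(\norm{\vc v}_p^{2})$ remainders is unproblematic here because $\vc u$ is fixed and $r,c,C$ are permitted to depend on it, so continuity on a small closed ball around $\vc u$ furnishes the needed uniform constants. One could alternatively invoke the known local Lipschitz equivalence of Hilbert's projective metric with the Euclidean metric on a transversal cross-section (here the $p_l$-sphere), but the direct computation above is self-contained.
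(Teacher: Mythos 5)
Your proof is correct, but it reaches the key one-factor estimate by a genuinely different route than the paper. The paper also reduces to the per-component claim $c_l\norm{\vc u_l-\vc x_l}_{p_l}\leq \mu_l(\vc u_l,\vc x_l)\leq C_l\norm{\vc u_l-\vc x_l}_{p_l}$ and then assembles the factors exactly as you do (product structure $\mu=\sum_{l}\mu_l$, equivalence of $\norm{\cdot}_G$ with the sum norm $\norm{\vc v}_{\bar p}=\sum_l\norm{\vc v_l}_{p_l}$); the difference lies in how that per-component estimate is obtained. The paper imports two inequalities of Nussbaum (Equations (1.20) and (1.21) of \cite{Nussb}), which bound $\tilde\mu_l$ above by $\ln\big((\xi_l+\norm{\vc v_l-\vc x_l}_{p_l})/(\xi_l-\norm{\vc v_l-\vc x_l}_{p_l})\big)$ and $\norm{\vc x_l-\vc y_l}_{p_l}$ above by $3(e^{\tilde\mu_l}-1)$, and then linearizes these via Lipschitz estimates of $t\mapsto\ln\big(\tfrac{1+t}{1-t}\big)$ and $t\mapsto e^t$ near $0$ (also invoking $p_l\geq 2$ from Remark \ref{pi_prop} when bundling constants). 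You instead give a self-contained local linearization: the constraint $\norm{\vc x_l}_{p_l}=1$ forces $\ps{\psi_{p_l}(\vc u_l)}{\vc h}=O(\norm{\vc h}^2)$, so the displacement is, to first order, tangential, and $\mu_l(\vc u_l,\cdot)$ agrees to first order with $(p_l-1)N(\vc v)$ where $N$ is the oscillation seminorm $\vc w\mapsto\max_j w_j/u_j-\min_j w_j/u_j$; the crux is that $N$ is positive definite on the tangent hyperplane because its kernel $\R\vc u_l$ is transversal to it — which is precisely where the unit-sphere normalization is used, reflecting that $\mu_l$ is only a projective pseudometric on the cone. Norm equivalence on the finite-dimensional tangent space and crude remainder control (legitimate here since $\vc u$ is fixed and the constants may depend on it) then give the two-sided bound. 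Your route avoids the external inequalities, makes explicit where normalization enters, and needs only $p_l>1$ rather than $p_l\geq 2$; the paper's is shorter given the citations. Both are valid proofs of the statement.
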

Note that the assumption $p_i \leq p_k$ for $k\in [m]\setminus\{i\}$ when $m=2$ is not restrictive by Remark \ref{pi_prop}.
\begin{proof}
Let $l\in\msi$ and $\vc v_l \in \NS^{d_l}_{++}$, note that since $\R_+\to \R_+\colon t\mapsto t^{p_l-1}$ is an increasing function ($p_l>1$ by assumption), for every $\vc  x_l,\vc y_l \in \R^{d_l}_{++}$, we have 
\begin{equation*}
\mu_l(\vc x_l,\vc y_l)= (p_l-1)\ln\left(\frac{\displaystyle\max_{j_l\in[d_l]} \frac{x_{l,j_l}}{y_{l,j_l}}}{\displaystyle\min_{j_l\in[d_l]}\frac{x_{l,j_l}}{y_{l,j_l}}}\right).
\end{equation*}
There exists $\xi_l >0$ such that $B_{\xi_l}^l(\vc v_l)\subset \R^{d_l}_{++}$. Equation (1.21) in \cite{Nussb} reads
\begin{equation*}
\frac{1}{p_l-1}\mu(\vc v_l, \vc x_l)\eqqcolon\tilde{\mu_l}(\vc v_l,\vc x_l) \leq \ln\left(\frac{\xi_l+\norm{\vc v_l-\vc x_l}_{p_l}}{\xi_l-\norm{\vc v_l - \vc x_l}_{p_l}}\right) = \ln\left(\frac{1+\frac{\norm{\vc v_l-\vc x_l}_{p_l}}{\xi_l}}{1-\frac{\norm{\vc v_l - \vc x_l}_{p_l}}{\xi_l}}\right) \qquad \forall \vc x_l \in B_{\xi_l}^l(\vc v_l).
\end{equation*}
So, let $t(\vc x_l) \coloneqq\frac{\norm{\vc v_l - \vc x_l}_{p_l}}{\xi_l}$ and $0<\epsilon_l< \xi_l$, then $\overline{B_{\epsilon_l}^l(\vc v_l)}\subset B_{\xi_l}^l(\vc v_l)$ and we have
$\tilde{\mu}_l(\vc v_l,\vc x_l) \leq \ln\left(\frac{1+t(\vc x_l)}{1-t(\vc x_l)}\right)$ for all $\vc x_l\in \overline{B_{\epsilon_l}^l(\vc v_l)}.$
Now, the function $h\colon\big[0,\frac{\epsilon_l}{\xi_l}\big]\to \R,t\mapsto \ln\left(\frac{1+t}{1-t}\right)$ is continuously differentiable on $\big[0,\frac{\epsilon_l}{\xi_l}\big]$ and $2 \leq h'(t)=2(1-t^2)^{-1}\leq 2\left(1-\left(\epsilon_l/\xi_l\right)^2\right)^{-1}$ for all $t\in \left[0,\frac{\epsilon_l}{\xi_l}\right].$
It follows that $h$ is a Lipschitz function and thus there exists $K_1 >0$ such that
$|h(t)-h(s)|\leq K_1|t-s|$ for every $s,t \in \big[0,\frac{\epsilon_l}{\xi_l}\big]$. Thus
\begin{equation*}
\tilde{\mu}_l(\vc v_l,\vc x_l) \leq h\big(t(\vc x_l)\big)=\big|h\big(t(\vc x_l)\big)-h\big(0\big)\big| \leq K_1\big|t(\vc x_l)-t(\vc v_l)\big| =\frac{K_1}{\xi_l}\norm{\vc x_l-\vc v_l}_{p_l} \quad \forall \vc x_l \in \overline{B_{\epsilon_l}^l(\vc v_l)}.
\end{equation*}
 In Equation $(1.20)$ in \cite{Nussb} it is shown
\begin{equation}\label{important_eq}
\norm{\vc x_l-\vc y_l}_{p_l}\leq 3\Big(e^{\tilde{\mu}_l(\vc x_l, \vc y_l)}-1\Big), \qquad \forall \vc x_l,\vc y_l\in\NS_{++}^{d_l}.
\end{equation}
In particular, as shown above, for every $\vc x_l\in\overline{B_{\epsilon_l}^l(\vc v_l)}$ we have $\tilde{\mu}(\vc v_l,\vc x_l)\leq \frac{\epsilon_lK_1}{\xi_l}$ and the function $t\mapsto e^{t}$ is Lipschitz on $\big[0,\frac{\epsilon_lK_1}{\xi_l}\big]$ (as it smooth and the derivative is bounded on the interval). So, there exists $K_2>0$ such that $|e^s-e^t|\leq \frac{K_2}{3}|s-t|$ for every $s,t\in\big[0,\frac{\epsilon_lK_1}{\xi_l}\big]$. It follows that 
\begin{equation*}
\norm{\vc x_l-\vc v_l}_{p_l}\leq 3\big|e^{\tilde{\mu}_l(\vc v_l, \vc x_l)}-3e^0\big|\leq K_2\big|\tilde{\mu}_l(\vc v_l,\vc x_l)-0| = K_2\tilde{\mu}_l(\vc v_l,\vc x_l) \qquad \forall \vc x_l\in\NS_{++}^{d_l}\cap\overline{B_{\epsilon_l}^l(\vc v_l)}.
\end{equation*}
Now, with $\tilde K \coloneqq \max\{K_2,\frac{(p_l-1)K_1}{\xi_l}\}$, since $p_l \geq 2$ for $l\in\msi$ by Remark \ref{pi_prop}, we get
\begin{equation*}
\norm{\vc x_l-\vc v_l}_{p_l}\leq\tilde K(p_l-1)\tilde{\mu}_l(\vc v_l,\vc x_l)=\tilde K\mu_l(\vc v_l,\vc x_l)\leq\tilde K^2\norm{\vc x_l-\vc v_l}_{p_l} \qquad \forall \vc x_l\in\NS_{++}^{d_l}\cap\overline{B_{\epsilon_l}^l(\vc v_l)}.
\end{equation*}
So, for every $l\in\msi$ there exists $r_l> 0$ and $C_l>0$ such that $
\norm{\vc u_l-\vc x_l}_{p_l}\leq C_l\mu_l(\vc x_l,\vc u_l) \leq C_l^2\norm{\vc u_l-\vc x_l}_{p_l}$ for all $\vc x_l \in B^l_{r_l}(\vc u_l)\cap\NS_{++}^{d_l}$. Now, note that $\norm{\vc v}_{\bar{p}} \coloneqq \sum_{l\in\msi}\norm{\vc v_l}_{p_l}$ is a norm on $\sR^{d-d_i}$. Since all norms are equivalent on finite dimensional spaces, there exists a constant $C_0$ such that $\norm{\vc v}_{\bar{p}} \leq C_0 \norm{\vc v}_G \leq C_0^2 \norm{\vc v}_{\bar p}$ for every $\vc v \in \sR^{d-d_i}$. Let $r = \min_{l\in\msi}\frac{r_l}{C_0}$ and $C = C_0\max_{l\in\msi}C_l$, then for all $\vc x \in\{\vc z\in \sR^{d-d_i}\mid \norm{\vc z -\vc u}_G<r\}\cap \NS^{d-d_i}_{++}$ we have $\norm{\vc x_l-\vc u_l}_{p_l} \leq \norm{\vc x-\vc u}_{\bar{p}} \leq C_0 \norm{\vc x-\vc u}_G < r_l$ for every $l\in\msi$, and thus 
\begin{align*}
\norm{\vc u-\vc x}_G &\leq C_0\norm{\vc u-\vc x}_{\bar p} \leq C_0\sum_{l\in\msi} C_l\mu_l(\vc u_l,\vc x_l) \leq C\mu(\vc u,\vc x) = C\sum_{l\in\msi} \mu_l(\vc u_l,\vc x_l) \\ &\leq C\sum_{l\in\msi} C_l\norm{\vc u_l-\vc x_l}_{p_l} \leq C\norm{\vc u-\vc x}_{\bar p}\max_{l\in\msi}C_l\leq C^2 \norm{\vc u-\vc x}_G .
\end{align*}
Divide the inequality by $C$ to conclude the proof.
\end{proof}
Now, we prove that $G$ is non-expansive with respect to the metric $\mu$ defined in Proposition \ref{metric_space}.
\begin{prop}\label{nonexpansive}
Let $f\geq 0$ a weakly irreducible tensor, $1 < p_1,\ldots,p_m < \infty$ such that there is $i \in [m]$ with $(m-1)p_i' \leq p_k$ for every $k\in\msi$ and $\mu$ defined as in Proposition \ref{metric_space}, then for every $\vc x,\vc y \in \NS_{++}^{d-d_i}$ we have $\mu\big(G(\vc x),G(\vc y)\big)\leq \mu(\vc x,\vc y)$.
\end{prop}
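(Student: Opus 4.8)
The plan is to run a Birkhoff/Hilbert-metric type computation, tracking the oscillation of ratios of positive vectors through the two layers hidden in the definition of $s_{i,l}$. For $\vc a,\vc b\in\R^n_{++}$ write $\textup{osc}(\vc a,\vc b)\coloneqq(\max_{j}a_j/b_j)/(\min_{j}a_j/b_j)$; since $t\mapsto t^{p_l-1}$ is increasing, $\mu_l(\vc x_l,\vc y_l)=(p_l-1)\ln\textup{osc}(\vc x_l,\vc y_l)$, and $\textup{osc}$ is clearly invariant under scaling either argument by a positive number. Because $s_{i,l}(\vc x)>0$ for $\vc x\in\NS^{d-d_i}_{++}$ by Corollary \ref{dual_s_pos} and $\mu(\vc x,\vc y)=\sum_{l\in\msi}\mu_l(\vc x_l,\vc y_l)$, this scale-invariance lets me replace $G(\vc x)_l$ by the unnormalised $s_{i,l}(\vc x)$, so it suffices to prove $\sum_{l\in\msi}(p_l-1)\ln\textup{osc}\big(s_{i,l}(\vc x),s_{i,l}(\vc y)\big)\leq\mu(\vc x,\vc y)$.

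The key lemma I would isolate is the following contraction property of the multilinear form. If $g\in\R^{d_1\times\ldots\times d_m}$ with $g\geq 0$, $l\in[m]$ and $\vc u,\vc v\in\sR^d_{++}$ are such that $\grad_lg(\vc u)>0$ and $\grad_lg(\vc v)>0$, then
\begin{equation*}
\textup{osc}\big(\grad_lg(\vc u),\grad_lg(\vc v)\big)\ \leq\ \prod_{k\in[m]\setminus\{l\}}\textup{osc}(\vc u_k,\vc v_k).
\end{equation*}
The proof is short: writing $\partial_{l,j_l}g(\vc u)=\sum_J g_J\prod_{k\neq l}u_{k,j_k}$ and factoring out $\prod_{k\neq l}v_{k,j_k}$ from each summand, the ratio $\partial_{l,j_l}g(\vc u)/\partial_{l,j_l}g(\vc v)$ is an average of the numbers $\prod_{k\neq l}(u_{k,j_k}/v_{k,j_k})$ with the nonnegative weights $g_J\prod_{k\neq l}v_{k,j_k}$; each such number lies in $\big[\prod_{k\neq l}\min_{j_k}(u_{k,j_k}/v_{k,j_k}),\ \prod_{k\neq l}\max_{j_k}(u_{k,j_k}/v_{k,j_k})\big]$, hence so does the ratio, and dividing its maximum over $j_l$ by its minimum over $j_l$ gives the claim. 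I would combine this with the trivial identity $\textup{osc}\big(\psi_q(\vc a),\psi_q(\vc b)\big)=\textup{osc}(\vc a,\vc b)^{q-1}$ for $\vc a,\vc b>0$ and $q>1$.

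Now I apply these facts to $s_{i,l}(\vc x)=\psi_{p_l'}\big(\grad_lf(\vc x_1,\ldots,\vc x_{i-1},\vc w(\vc x),\vc x_{i+1},\ldots,\vc x_m)\big)$ with $\vc w(\vc x)\coloneqq\psi_{p_i'}(\grad_if(\vc x))$; weak irreducibility and Proposition \ref{Characterizer_prop}, \ref{magiclema}) guarantee that $\grad_if$ and $\grad_lf$ stay strictly positive on strictly positive inputs, so all the hypotheses above are met. First, $\textup{osc}(\vc w(\vc x),\vc w(\vc y))=\textup{osc}(\grad_if(\vc x),\grad_if(\vc y))^{p_i'-1}\leq\big(\prod_{k\in\msi}\textup{osc}(\vc x_k,\vc y_k)\big)^{p_i'-1}$. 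Next, feeding the intermediate vector into $\grad_lf$ (index $i$ contributes $\vc w$, the indices of $\msi\setminus\{l\}$ contribute the $\vc x_k$, and index $l$ is the differentiation slot) gives $\textup{osc}(s_{i,l}(\vc x),s_{i,l}(\vc y))\leq\big(\textup{osc}(\vc w(\vc x),\vc w(\vc y))\prod_{k\in\msi\setminus\{l\}}\textup{osc}(\vc x_k,\vc y_k)\big)^{p_l'-1}$. Taking logarithms, multiplying by $p_l-1$, and using the identity $(p_l-1)(p_l'-1)=1$, I obtain, with the shorthand $b_k\coloneqq\ln\textup{osc}(\vc x_k,\vc y_k)=\mu_k(\vc x_k,\vc y_k)/(p_k-1)\geq 0$,
\begin{equation*}
\mu_l\big(G(\vc x)_l,G(\vc y)_l\big)\ \leq\ (p_i'-1)\sum_{k\in\msi}b_k\ +\ \sum_{k\in\msi\setminus\{l\}}b_k .
\end{equation*}

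Finally, summing this over the $m-1$ values of $l\in\msi$, the right-hand side becomes $\big[(m-1)(p_i'-1)+(m-2)\big]\sum_{k\in\msi}b_k=\big[(m-1)p_i'-1\big]\sum_{k\in\msi}b_k$, whence
\begin{equation*}
\mu\big(G(\vc x),G(\vc y)\big)\ \leq\ \big[(m-1)p_i'-1\big]\sum_{k\in\msi}\frac{\mu_k(\vc x_k,\vc y_k)}{p_k-1}\ \leq\ \sum_{k\in\msi}\mu_k(\vc x_k,\vc y_k)\ =\ \mu(\vc x,\vc y),
\end{equation*}
the last step being termwise and using exactly the hypothesis $(m-1)p_i'\leq p_k$, i.e. $(m-1)p_i'-1\leq p_k-1$, together with $\mu_k(\vc x_k,\vc y_k)\geq 0$. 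The main obstacle is really only the oscillation lemma and the careful bookkeeping of which variable enters which gradient at each of the two layers; once that is in place, the elementary identities $(p_l-1)(p_l'-1)=1$ and $(m-1)(p_i'-1)+(m-2)=(m-1)p_i'-1$ make the constants collapse precisely onto the standing assumption on $p_1,\ldots,p_m$.
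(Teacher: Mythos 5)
Your proof is correct, and it reaches the same per-component estimates as the paper, but by a noticeably different mechanism. The paper treats $s_{i,l}$ as a black box: it sandwiches $\phi_l\vc y_l\leq \vc x_l\leq \varphi_l\vc y_l$ with $\phi_l=\min_{j_l}x_{l,j_l}/y_{l,j_l}$, $\varphi_l=\max_{j_l}x_{l,j_l}/y_{l,j_l}$, and then invokes the already-proved Lemma \ref{order_preserving} (monotonicity) together with the homogeneity Lemma \ref{homos} to bound $\big(s_{i,l,j_l}(\vc x)/s_{i,l,j_l}(\vc y)\big)^{p_l-1}$ between $\phi_l^{-1}(\prod_k\phi_k)^{p_i'}$ and $\varphi_l^{-1}(\prod_k\varphi_k)^{p_i'}$; it then bounds the full product over $l\in\msi$ at once, using $\phi_l\leq 1\leq\varphi_l$ (which comes from the normalization $\norm{\vc x_l}_{p_l}=\norm{\vc y_l}_{p_l}=1$) and $(m-1)p_i'\leq p_l$ to compare exponents. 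You instead prove a Birkhoff-type oscillation lemma directly for $\grad_l g$ of a nonnegative multilinear form via the weighted-average argument, compose it through the two layers of $s_{i,l}$ (inner $\psi_{p_i'}\circ\grad_i f$, outer $\psi_{p_l'}\circ\grad_l f$), bound each $\mu_l\big(G(\vc x)_l,G(\vc y)_l\big)$ separately, and sum, using the hypothesis termwise in the form $(m-1)p_i'-1\leq p_k-1$. The resulting componentwise bound is in fact identical to what the paper's sandwich gives, so the two proofs are equivalent in strength; what your route buys is independence from Lemma \ref{homos} and from the normalization (oscillation is scale-invariant and always $\geq 1$, so the estimate holds for arbitrary positive vectors, not just unit-norm ones), at the cost of redoing an averaging computation inside the multilinear form that the paper's existing lemmas let it skip. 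Your bookkeeping of which slot feeds which gradient, and the collapse $(m-1)(p_i'-1)+(m-2)=(m-1)p_i'-1$, are exactly right.
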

\begin{proof}
For $l\in\msi$, let $\phi_l \coloneqq \min_{j_l\in [d_l]}\frac{x_{l,j_l}}{y_{l,j_l}}$ and $\varphi_l \coloneqq \max_{j_l\in [d_l]}\frac{x_{l,j_l}}{y_{l,j_l}}$. Note that $\norm{\vc x_l}_{p_l}=\norm{\vc y_l}_{p_l}=1$ implies $\phi_l \leq 1 \leq \varphi_l$. By Lemmas \ref{order_preserving} and \ref{homos}, for every $l\in \msi$ and $j_l\in[d_l]$ we have 
\begin{equation*}
\phi_l^{-1}\left(\prod_{k\in\msi}\phi_k\right)^{p_i'}\big(s_{i,l,j_l}(\vc y)\big)^{p_l-1}\leq \big(s_{i,l,j_l}(\vc x)\big)^{p_l-1} \leq \varphi_l^{-1}\left(\prod_{k\in\msi}\varphi_k\right)^{p_i'}\big(s_{i,l,j_l}(\vc y)\big)^{p_l-1}.
\end{equation*}
It follows that for every $j_1\in [d_1],\ldots,j_m\in[d_m]$ we have 
\begin{equation*}
\prod_{l\in\msi}\phi_l^{p_l-1}\leq \prod_{l\in\msi}\left(\frac{s_{i,l,j_l}(\vc x)}{s_{i,l,j_l}(\vc y)}\right)^{p_l-1} \leq \prod_{l\in\msi}\varphi_l^{p_l-1},
\end{equation*}
where we have used $\phi_l \leq 1 \leq \varphi_l$ and $1 \leq p_i'(m-1)\leq p_l$ for every $l\in\msi$. Thus,
\begin{equation*}
e^{\mu(G(\vc x),G(\vc y))}=\frac{\displaystyle \prod_{l\in\msi}\max_{j_l\in [d_l]}\left(\frac{s_{i,l,j_l}(\vc x)}{s_{i,l,j_l}(\vc y)}\right)^{p_l-1}}{\displaystyle \prod_{l\in\msi}\min_{j_l\in [d_l]}\left(\frac{s_{i,l,j_l}(\vc x)}{s_{i,l,j_l}(\vc y)}\right)^{p_l-1}} \leq \frac{\displaystyle \prod_{l\in\msi}\varphi_l^{p_l-1}}{\displaystyle \prod_{l\in\msi}\phi_l^{p_l-1}}=e^{\mu(\vc x, \vc y)},
\end{equation*}
the desired inequality follows from the fact that $t\mapsto\ln(t)$ is an increasing function.
\end{proof}
Now, let us prove the convergence of the sequences produced by \algo{}. Note that Theorem \ref{convrate_thm} is a direct consequence of the next Theorem.
\begin{thm}\label{finally}
Let $f\geq 0$ be weakly irreducible and $1<p_1,\ldots,p_m<\infty$ such that there is $i\in [m]$ with $(m-1)p_i'\leq p_k$ for all $k\in [m]\setminus\{i\}$ and, if $m=2$, choose $i$ such that $p_i \leq p_k$ for $k\in [2]\setminus\{i\}$. Let $\vc x^*\in \NS^{d-d_i}$ be the unique strictly positive critical point of $Q_i$ in $\NS^{d-d_i}_{++}$. Let $(\lambda_-^k)_{k\in\N},(\lambda_+^k)_{k\in\N}\subset\R$ and $(\vc x^k)_{k\in\N}\subset\NS^{d-d_i}$ be the sequences produced by \algo{}. Then $(\vc x^k)_{k\in\N}$ converges to $\vc x^*$, $(\lambda_-^k)_{k\in\N},(\lambda_+^k)_{k\in\N}$ and $\big(Q_i(\vc x^k)\big)_{k\in\N}$ converge to $\norm{f}_{p_1,\ldots,p_m}$ and there is a norm $\norm{\cdot}_G$ on $\sR^{d-d_i},0<\nu <1$ and $k_0\in\N$ such that $\norm{\vc x^{k+1}-\vc x^*}_G\leq\nu\norm{\vc x^{k}-\vc x^*}_G$ for all $k\geq k_0$.
\end{thm}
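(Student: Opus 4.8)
The plan is to glue together the local contraction result of Corollary \ref{Neighborhood} (which rests on the spectral estimate $\rho(DG(\vc x^*))<1$ of Proposition \ref{specrad}) with a global convergence argument, and then to read off the convergence of $\lambda^k_\pm$ and $Q_i(\vc x^k)$ by continuity. Throughout I use that $\vc x^{k+1}=G(\vc x^k)$ and that the iterates stay in $\NS^{d-d_i}_{++}$, both recorded in the proof of Proposition \ref{mono}, and that $\vc x^*$ (the unique strictly positive critical point of $Q_i$) exists and is unique by Theorem \ref{weak_strictpos} and Corollary \ref{unique_part}.

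First I would prove $\vc x^k\to\vc x^*$ by applying Nussbaum's extension lemma, Lemma \ref{golden_lemma}, with $S=\sR^{d-d_i}$, $\mu_S(\vc x,\vc y)=\norm{\vc x-\vc y}_G$ (the norm and radius $r_0$ of Corollary \ref{Neighborhood}), $S_0=\NS^{d-d_i}_{++}$, $\mu$ the metric of Proposition \ref{metric_space}, $T=G$, and $U=\{\vc x:\norm{\vc x-\vc x^*}_G<r_0\}$. The hypotheses are checked as follows: $S_0$ is connected, being a product of the sets $\NS^{d_l}_{++}$, each connected as a continuous radial image of the convex set $\R^{d_l}_{++}$; $\mu$ induces on $S_0$ the topology inherited from $S$, which is precisely the local equivalence of $\mu$ and $\norm{\cdot}_G$ proved in Proposition \ref{loc_equiv}; $G$ is $\mu$-nonexpansive by Proposition \ref{nonexpansive}; and $\mu_S(G^n(\vc x),\vc x^*)\to0$ for every $\vc x\in U\cap S_0$ by Corollary \ref{Neighborhood}, which gives $\norm{G^n(\vc x)-\vc x^*}_G\le\nu^n\norm{\vc x-\vc x^*}_G$. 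For the remaining hypothesis I need a continuous $\varphi$ with $\varphi(0)=0$ and $\mu_S(\vc x,\vc y)\le\varphi(\mu(\vc x,\vc y))$ on $U\cap S_0$; I would take the globally valid choice $\varphi(t)=3C_0(m-1)(e^t-1)$, where $C_0$ is a constant with $\norm{\vc v}_G\le C_0\norm{\vc v}_{\bar p}$ for $\norm{\vc v}_{\bar p}\coloneqq\sum_{l\in\msi}\norm{\vc v_l}_{p_l}$ (norm equivalence on $\sR^{d-d_i}$), combining the estimate \eqref{important_eq} of \cite{Nussb} with $\tilde\mu_l(\vc x_l,\vc y_l)\le\mu_l(\vc x_l,\vc y_l)\le\mu(\vc x,\vc y)$, where $\tilde\mu_l=\mu_l/(p_l-1)$ and $p_l\ge 2$ for all $l\in\msi$ by Remark \ref{pi_prop}. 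Lemma \ref{golden_lemma} then yields $\norm{\vc x^k-\vc x^*}_G\to0$.

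Next, for the scalar sequences, Proposition \ref{mono} already gives that $(\lambda^k_-)$ is nondecreasing and $(\lambda^k_+)$ nonincreasing, both sandwiching $\norm{f}_{p_1,\ldots,p_m}$, so both converge. Since $\lambda^{k+1}_-=\big(\mbd_i(\vc x^k)\big)^{\frac{1}{p_i'(m-1)}}$ and $\lambda^{k+1}_+=\big(\Mbd_i(\vc x^k)\big)^{\frac{1}{p_i'(m-1)}}$, with $\mbd_i,\Mbd_i$ continuous on $\NS^{d-d_i}_{++}$ and $\vc x^k\to\vc x^*$ inside $\NS^{d-d_i}_{++}$, I pass to the limit and use Theorem \ref{gen_PF}, which gives $\mbd_i(\vc x^*)=\Mbd_i(\vc x^*)=\norm{f}_{p_1,\ldots,p_m}^{p_i'(m-1)}$; hence $\lambda^k_\pm\to\norm{f}_{p_1,\ldots,p_m}$. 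Likewise $Q_i$ is continuous at $\vc x^*$ (weak irreducibility gives $\grad_if(\vc x^*)>0$ via Proposition \ref{Characterizer_prop}) and $Q_i(\vc x^*)=\norm{f}_{p_1,\ldots,p_m}$, so $Q_i(\vc x^k)\to\norm{f}_{p_1,\ldots,p_m}$. Finally, since $\norm{\vc x^k-\vc x^*}_G\to0$ there is $k_0\in\N$ with $\norm{\vc x^{k_0}-\vc x^*}_G<r_0$; applying Corollary \ref{Neighborhood} to the iteration started at $\vc y^0\coloneqq\vc x^{k_0}$, so that $\vc y^j=\vc x^{k_0+j}$ for all $j$, yields $\norm{\vc x^{k+1}-\vc x^*}_G\le\nu\norm{\vc x^{k}-\vc x^*}_G$ for every $k\ge k_0$. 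Theorem \ref{convrate_thm} then follows by additionally transporting the conclusion through the bijection $\Phi_i$ of Proposition \ref{dual_egual} and invoking Corollary \ref{dual_pass}.

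I expect the main obstacle to be the bookkeeping needed to verify all hypotheses of Lemma \ref{golden_lemma} — in particular, producing a single comparison function $\varphi$ valid on all of $U\cap S_0$ rather than only the pointwise local bounds of Proposition \ref{loc_equiv}, and confirming that $\mu$ and $\norm{\cdot}_G$ generate the same topology on $\NS^{d-d_i}_{++}$. Both reduce to the explicit Nussbaum estimates already used for Proposition \ref{loc_equiv}, but they must be stitched together with care, and one must keep track of the two distinct sphere-type domains ($\NS^d_{++}$ versus $\NS^{d-d_i}_{++}$) on which the algorithm and the map $G$ operate.
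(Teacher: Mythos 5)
Your proposal is correct and follows essentially the same route as the paper: Nussbaum's Lemma \ref{golden_lemma} applied with $S_0=\NS^{d-d_i}_{++}$, $\mu_S=\norm{\cdot}_G$ from Corollary \ref{Neighborhood}, the metric $\mu$ of Proposition \ref{metric_space}, nonexpansiveness from Proposition \ref{nonexpansive}, the comparison function $\varphi(t)=3C(m-1)(e^t-1)$ built from Nussbaum's estimate \eqref{important_eq} together with $p_l\geq 2$, and then continuity of $\mbd_i,\Mbd_i,Q_i$ plus Corollary \ref{Neighborhood} restarted at $\vc x^{k_0}$ for the scalar limits and the eventual linear rate. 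No gaps; the bookkeeping you flag is exactly what the paper's proof carries out.
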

\begin{proof}
First, we prove that $(\vc x^k)_{k\in\N}$ converges to $\vc x^*$. Let $\norm{\cdot}_G$ be the norm defined in Corollary \ref{Neighborhood} and $\mu_S:\sR^{d-d_i}\times \sR^{d-d_i}\to \R$ the metric defined by $\mu_S(\vc x,\vc y)\coloneqq \norm{\vc x-\vc y}_G$. Note that $\NS_{++}^{d-d_i}\subset \sR^{d-d_i}$ is connected and if $\mu$ is defined as in Proposition \ref{metric_space}, then $\mu$ is a metric on $\NS^{d-d_i}_{++}$. Moreover, Proposition \ref{loc_equiv} implies that the topology induced by $\mu$ on $\NS^{d-d_i}_{++}$ is the same as that inherited from $\big(\sR^{d-d_i},\mu_S\big)$. By Corollary \ref{Neighborhood} we know the existence of some $r_0>0$ and $0<\nu<1$ such that for every $\vc y^0 \in \sR^{d-d_i}$ with $\norm{\vc y^0-\vc x^*}_G<r_0$ holds
\begin{equation*}\lim_{k\to \infty} \mu_S(\vc y^k,\vc x^*)=\lim_{k\to \infty} \norm{\vc y^k-\vc x^*}_G\leq \lim_{k\to \infty} \nu^k\norm{\vc y^0-\vc x^*}_G =0 \quad \text{where } \vc y^{k+1} \coloneqq G(\vc y^k) \ \forall k\in\N.
\end{equation*}
Now, let $\norm{\vc v}_{\bar{p}} \coloneqq \sum_{l\in\msi}\norm{\vc v_l}_{p_l}$, $\mu_l$ defined as in Equation \eqref{def_mu_l}  and $C>0$ such that $\norm{\vc v}_G \leq C\norm{\vc v}_{\bar{p}}$ for every $\vc v \in \sR^{d-d_i}$. From Equation \eqref{important_eq} we know that $\norm{\vc x_l-\vc z_l}_{p_l} \leq 3\Big(e^{\frac{\mu_l(\vc x_l,\vc y_l)}{p_l-1}}-1\Big)$ for every $\vc x_l,\vc y_l \in \R^{d_l}$ with $\norm{\vc x_l}_{p_l}=\norm{\vc y_l}_{p_l}=1$. By Remark \ref{pi_prop}, we know that $p_k\geq 2$ for all $k\in\msi$. It follows that for every $\vc x,\vc y \in \NS_{++}^{d-d_i} \cap \big\{\vc z \in \sR^{d-d_i}\mid \norm{\vc z-\vc x^*}_G<r_0\big\}$ we have
\begin{align*}
\norm{\vc x-\vc y}_G &\leq C \norm{\vc x-\vc y}_{\bar{p}} \leq C \sum_{l\in\msi}3\left(e^{\frac{\mu_l(\vc x_l,\vc y_l)}{p_l-1}}-1\right) \leq C\sum_{l\in\msi}3\left(e^{\mu_l(\vc x_l,\vc y_l)}-1\right) \\
&\leq 3C(m-1)\left(e^{\sum_{l\in\msi}\mu_l(\vc x_l,\vc y_l)}-1\right) =  3C(m-1)\big(e^{\mu(\vc x,\vc y)}-1\big).
\end{align*}
In particular, the function $\varphi:\{t\in\R\mid t \geq 0\} \to \{t\in\R\mid t \geq 0\}$ defined by $\phi(t)=3C(m-1)\big(e^t-1\big)$ is continuous and satisfies $\varphi(0)=0$. Finally, note that by Proposition \ref{nonexpansive} we know that $\mu\big(G(\vc x),G(\vc y)\big)\leq \mu(\vc x,\vc y)$ for every $\vc x,\vc y\in\NS_{++}^{d-d_i}$. So, we may apply Lemma \ref{golden_lemma} and ensure that $\lim_{k\to \infty}\mu(\vc x^{*},\vc x^k)=0$ for every choice of $\vc x^0\in\NS^{d-d_i}_{++}$, i.e. $(\vc x^k)_{k\in\N}$ converges to $\vc x^*\in\NS^{d-d_i}_{++}$. From Theorem \ref{gen_sum_PF}, we know that $\norm{f}_{p_1,\ldots,p_m}=Q_i(\vc x^*)$ and since $\mbd_i,\Mbd_i$ and $Q_i$ are continuous functions on $\NS^{d-d_i}_{++}$, we have 
\begin{equation*}
\lim_{k\to \infty}\lambda_+^k=\lim_{k\to \infty}\big(\Mbd_i(\vc x^k)\big)^{\frac{1}{p_i'(m-1)}} =\big(\Mbd_i(\vc x^*)\big)^{\frac{1}{p_i'(m-1)}}=\norm{f}_{p_1,\ldots,p_m}=\lim_{k\to \infty}\big(\mbd_i(\vc x^k)\big)^{\frac{1}{p_i'(m-1)}} = \lim_{k\to \infty}\lambda_-^k,
\end{equation*}
and $\lim_{k\to \infty}Q_i(\vc x^k)=Q_i(\vc x^*)=\norm{f}_{p_1,\ldots,p_m}$. Finally, since $(\vc x^k)_{k\in\N}$ converges to $\vc x^*$, there exists $k_0>0$ such that $\norm{\vc x^k-\vc x^*}_G<r_0$ for every $k\geq k_0$ and thus $\norm{\vc x^{k+1}-\vc x^*}_G\leq\nu \norm{\vc x^k-\vc x^*}_G$ for every $k\geq k_0$.
\end{proof}
\section{Experiments}\label{numexp}
We compare the $\algo$ with the Power Method (PM) proposed by Friedland et al. in \cite{Fried}. The PM computes the singular values of nonnegative weakly irreducible tensors for the special case $p_1= \ldots = p_m$. The sequence $(\vc v^k)_{k \in \N} \subset \sR^d$ produced by this algorithm can be formulated as $\vc w^{k+1} = \big(\sigma_1(\vc v^k),\ldots,\sigma_m(\vc v^k)\big), \vc v^{k+1} = \frac{\vc w^k}{\vc n^T\vc w^k}$ for $k \geq 1$ and some vector $\vc n \in \sR^{d}$ with $\vc n >0$. For all our experiments we took $\vc n = (1,1,\ldots,1)$ and $\vc v^0 =\frac{(1,1,\ldots,1)}{d_1+\ldots+d_m}\in\sR^d$ and $\vc x^0 = \left( \frac{(1,1,\ldots,1)}{\norm{(1,1,\ldots,1)}_{p_1}},\ldots,\frac{(1,1,\ldots,1)}{\norm{(1,1,\ldots,1)}_{p_m}}\right)\in \sR^{d-d_i}$ as starting points. Plots show linear convergence as stated in Theorem \ref{finally}. Numerical experiments on randomly generated tensors showed that $\algo$ converges usually quicker than PM. Note also that the computation of one iteration of $\algo$ requires to go two times over all entries of $f$ while PM requires it only one time. In Figure \ref{fig_norm}, we show the convergence rate of both algorithms for $p = p_1 = \ldots = p_m \in \{3,4,5\}$ and the weakly irreducible tensor $f\in\R^{2\times 3 \times 4}$ defined by 
\begin{equation*}
f_{1,2,1}=806, f_{1,3,1} =761,f_{1,3,4}=3, f_{2,1,1}=833, f_{2,2,2}=285,f_{2,3,3}=176 \quad \text{ and }\quad  f_{j_1,j_2,j_3}=0\  \text{ else}.
\end{equation*}
\begin{figure}
\begin{center}
\begin{minipage}[t]{0.49\linewidth}
        \includegraphics[height = 6.2cm]{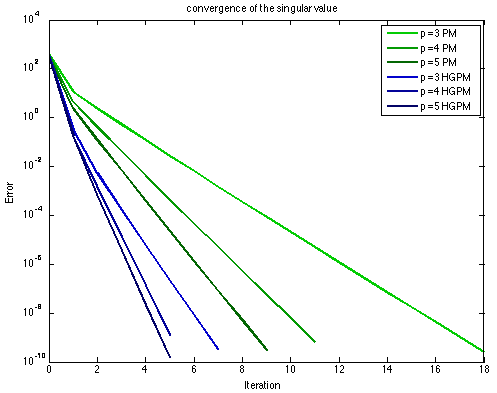}
\caption{Plot of the error $\big|Q(\vc v_1^k,\ldots,\vc v_{m}^k\big)-Q(\vc x^*)\big|$ (in green) and the error $\big|Q\big(\vc x_1^k,\ldots,\vc x_{i-1}^k,\sigma_i(\vc x^k),\vc x_{i+1}^k,\ldots,\vc x_m^k\big)-Q(\vc x^*)\big|$ (in blue) versus the number of iterations $k$ on a semilogarithmic scale.}
\label{fig_stopcrit}
    \end{minipage}\hfill
    \begin{minipage}[t]{0.49\linewidth}
\includegraphics[height = 6.2cm]{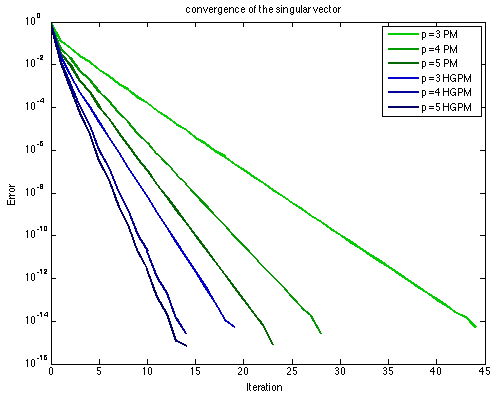}
\caption{Plot of the error $\norm{\vc v^k-\vc x^*}_2$ (in green) and the error $\Norm{\big(\vc x_1^k,\ldots,\vc x_{i-1}^k,\sigma_i(\vc x^k),\vc x_{i+1}^k,\ldots,\vc x_m^k\big)-\vc x^*}_2$ (in blue) versus the number of iterations $k$ on a semilogarithmic scale..}
\label{fig_norm}
    \end{minipage}
\end{center}
\end{figure}
\section*{Acknowledgements}
A. G. and M. H. acknowledge support by the ERC project NOLEPRO.
\section*{References}
\bibliography{tpn_bib.bib}
 \end{document}